\renewcommand{\vec}[1]{ \mathbf{#1}}
\newcommand{\mat}[1]{ {#1}}
\renewcommand{\Re}{\mbox{\it Re} \;}
\renewcommand{\Im}{\mbox{\it Im} \;}
\renewcommand{\exp}[1]{e^{#1}}
\begin{document}

\title{Field of values analysis of preconditioners for the Helmholtz equation in lossy media \thanks{The author was supported by the grant 133174 from the Academy of Finland}
}

\titlerunning{FOV analysis of preconditioners for the Helmholtz equation}        

\author{Antti Hannukainen}

\institute{A. Hannukainen \at
              Aalto University, Department of Mathematics and Systems Analysis, P.O. Box 11100, FI-00076 Aalto, Finland  \\
              Tel.: +358 9 470 23696 \\
              Fax: +358 9 470 23016  \\
              \email{antti.hannukainen@hut.fi}}

\date{\today}

\maketitle
 
\begin{abstract}
In this paper, we analyze the convergence of the preconditioned GMRES method for the first order finite element discretizations of the Helmholtz equation in media with losses. We consider a Laplace preconditioner, an inexact Laplace preconditioner and a two-level preconditioner. Our analysis is based on bounding the field of values of the preconditioned system matrix in the complex plane. The analysis takes the non-normal nature of the linear system naturally into account and allows us to easily consider certain type of inexact Laplace preconditioners via a perturbation argument.  For the two-level preconditioner, our convergence analysis takes into account a media, which has not been considered in previous works.
\keywords{Helmholtz equation \and Preconditioning \and Field of Values}
\subclass{65NF08 \and 65N30 \and 65N22}
\end{abstract}

\section{Introduction}
\label{intro}

Efficiently solving the Helmholtz equation is one of the major challenges in the field of numerical analysis. Current methods have difficulties both with the discretization and with the solution of the resulting linear system when the frequency grows. The first of these two difficulties is due to the very large number of elements required to obtain a meaningful numerical approximation to an highly-oscillating function. For example, the analysis in \cite{BaIh:95,BaIh:97} states that for the first order finite element method, the mesh size $h$ should satisfy the bound $\kappa^2 h \ll 1$ before the asymptotic convergence rate is achieved for a system with first order absorbing boundary conditions. Here $\kappa$ is the wavenumber related to frequency of the resolved problem. In practice, satisfying such requirement for large wavenumbers $\kappa$ leads to solving very large linear systems.

Developing efficient solvers for the linear systems arising form discretizations of the Helmholtz equation has proven to be considerably more difficult than for elliptic problems. This is due to the indefinite nature of the linear system. In addition, if absorbing boundary conditions or certain type of losses are included, the discretization matrix is also complex valued and non-normal. This and the indefinite nature of the problem render many successful solution methods for positive definite problems less useful for the Helmholtz equation. For example, when applied to linear systems arising from the Helmholtz equation, multigrid methods suffer from problems both in the smoothing and in the coarse grid correction steps, see e.g. \cite{El:01}. To have a convergent method, the coarse grid has to satisfy the same density constraints as the original discretization. In practice this means that although working multigrid preconditioners have been developed, they only provide a small benefit over a direct solver.

The current trend for solving the linear system is to use a preconditioner together with a suitable Krylov subspace solver. The indefinite matrix problem can be solved with several Krylov subspace methods, e.g. GMRES, BiCGStab, etc. (see \cite{Gr:1997,Sa:2003}). From these methods, a covergence theory exists only for the GMRES method (see e.g \cite{Gr:1997}). Because of this, we consider different preconditioners in connection with the GMRES method.

The existing preconditioners can be divided into two groups, shifted-Laplace preconditioners (see e.g \cite{ErOoVu:04,ErOoVu:06,Er:08,ErOoVu:06b}) and two-level preconditioners (see e.g. \cite{BrPaLe:1993,CaWi:1993,CaWi:92}). The shifted-Laplace preconditioners are further development of Laplace preconditioners, which have been studied by several authors, e.g. \cite{Ys:1989,BaGoTu:1983}.  These methods are successful in cutting the growth in the condition number due to the Laplace operator part. However, a $\kappa$-dependency in the required number of iterations still remains for the preconditioned system. Based on numerical examples \cite{ErOoVu:04,ErOoVu:06},  the number of iterations for the shifted-Laplace type preconditioners for problems with Dirichlet boundary conditions behaves like  $O(\kappa^2)$ and like $O(\kappa)$ for problems with absorbing type boundary conditions. Regardless of this asymptotic behavior, the introduction of the shift-term leads to a lower number of iterations compared to pure Laplace preconditioners. As the iterative methods for solving indefinite systems are computationally costly and memory intensive, even a small reduction in the number of required iterations is important. 

The two-level preconditioners are based on combining a Laplace preconditioner with a coarse grid correction. These methods can deliver $\kappa$-independent number of iterations, but they suffer from identical problems as multigrid methods. Namely, a direct solver has to be employed to compute the coarse grid correction on a mesh satisfying the same constraints with the original discretization. The analysis of such methods has been performed for real valued Helmholtz equation in \cite{BrPaLe:1993,CaWi:1993} by using tools from the analysis of additive Schwarz methods for elliptic problems.

In this paper, we will develop a field of values (FOV) based method to analyze the convergence of the preconditioned GMRES for the finite element discretizations of the Helmholtz equation with homogeneous Dirichlet boundary conditions in lossy media. We will consider a Laplace preconditioner, an inexact Laplace preconditioner and a two-level preconditioner. A similar analysis has been done in \cite{GiEr:06} for Hermitian positive definite split preconditioners using algebraic tools. The main difference to this work is that we estimate the FOV by using methods similar to the ones applied in the analysis of additive Schwarz preconditioners for elliptic problems (see e.g. \cite{ToWi:05}). 

The convergence of GMRES with a Laplace or a shifted-Laplace preconditioner has been previously analyzed in the literature, e.g. \cite{GiErVu:07}, by using algebraic tools. For certain kinds of losses, the preconditioned system matrix is diagonalizable in the inner product induced by a weighted mass matrix. In this approach, the eigenvalues are analyzed and the non-normality is taken into account by considering the conditioning of the weighted mass matrix.

Our analysis takes the non-normal nature of the problem automatically into account and allows us to analyze the inexact Laplace preconditioner via a perturbation argument.  We are also able to give convergence bounds for two-level preconditioner in lossy media. As such a preconditioner is not positive definite nor Hermitian, it is not covered by previous works. We will also give a more detailed analysis of the $\kappa$-dependency of the coarse grid mesh size $H$ when using  two-level preconditioners. Our analysis indicates that in the worst case, the constraint $\kappa^3 H \ll 1$ should be satisfied to guarantee convergence of the two-level method. The same mesh size constraint is also valid for the actual computational grid. The different mesh size requirement in comparison to \cite{BaIh:95,BaIh:97} is due to different boundary conditions.

The organization of this paper is the following. First we introduce the model problem and quickly review the field of values based convergence theory of the GMRES method. We then introduce the preconditioners and give bounds for the FOV in each case.  We conclude the paper with numerical examples on all of the proposed methods.

\section{Preliminaries}

We consider the problem 

\begin{equation}
\begin{aligned}
\label{eq:strong_problem}
\Delta u + (\kappa^2 - \mathrm{i} \sigma) u & = f  \quad \mbox{in }\Omega \\
u & = 0 \quad \mbox{on } \partial \Omega
\end{aligned}
\end{equation}

\noindent where $\kappa \in \mathbb{R}$ and polyhedral domain $\Omega \subset \mathbb{R}^d, d = 2,3$. For the analysis of the exact and inexact Laplace preconditioners we assume that $\sigma \in \mathbb{R}, \sigma > 0$. More general losses, $\sigma \in L^\infty(\Omega)$,

\begin{equation}
\label{eq:general_sigma}
\sigma \geq 0 \quad \mbox{and} \quad  \sigma \geq \sigma_m  > 0 \; \mbox{in} \; \omega \subset \Omega,
\end{equation}

\noindent  are considered in connection with the the two-level preconditioner. The more general case allows the presence of lossless areas, but introduces additional challenges in the analysis. 


The weak form of problem (\ref{eq:strong_problem}) is: Find~$u \in H_0^1(\Omega)$~such that

\begin{equation}
\label{eq:weak_problem}
a(u,v) = (f,v) \quad \forall v \in H^1_0(\Omega),
\end{equation}

\noindent in which $(\cdot,\cdot)$ is the $L^2(\Omega)$ - inner product and

\begin{equation}
\label{eq:sesquilinear_form}
a(u,v) = (\nabla u,\nabla v) -\kappa^2 (u,v) + \mathrm{i} (\sigma u,v).
\end{equation}

\noindent Under our assumptions on $\sigma$, the weak problem (\ref{eq:weak_problem}) has a unique solution. This follows from the unique continuation principle (see e.g. \cite{Le:97}) and the Fredholm alternative. 

In the analysis of the two-level preconditioners, we will use a duality argument. For this purpose, we need to consider the regularity and stability of the solution to problem (\ref{eq:weak_problem}). The coarse grid mesh size requirement $\kappa^3 H \ll 1$ will arise from the $\kappa$-dependency of the stability estimate. The weak solution will have the same regularity as the Poisson problem.

\begin{theorem} 
\label{th:regularity}
Let $f \in L^2(\Omega)$, $\kappa \in \mathbb{R}$, $\sigma \in L^{\infty}(\Omega)$ and $u$ be the weak solution to (\ref{eq:strong_problem}). Then $u \in H^{3/2 + \delta}(\Omega)$ with some $\delta > 0$.

\end{theorem}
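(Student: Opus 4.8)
The plan is to reduce the problem to a standard elliptic regularity result for the Poisson problem on a polyhedral domain, treating the lower-order terms $(\kappa^2 - \mathrm{i}\sigma)u$ as a right-hand side perturbation. First I would rewrite the weak equation \eqref{eq:weak_problem} in the form
\begin{equation*}
(\nabla u, \nabla v) = (f,v) + \kappa^2(u,v) - \mathrm{i}(\sigma u, v) \quad \forall v \in H^1_0(\Omega),
\end{equation*}
i.e. $u$ is the weak solution of the Dirichlet problem $-\Delta u = \tilde f$ in $\Omega$, $u = 0$ on $\partial\Omega$, with $\tilde f := f + \kappa^2 u - \mathrm{i}\sigma u$. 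Since $u \in H^1_0(\Omega) \subset L^2(\Omega)$ and $\sigma \in L^\infty(\Omega)$, $\kappa \in \mathbb{R}$, we have $\tilde f \in L^2(\Omega)$.

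Next I would invoke the standard shift theorem for the Laplacian on a polyhedral (Lipschitz) domain: for a convex polyhedron one gets full $H^2$ regularity, while for a general polyhedral domain with reentrant corners/edges there is $\epsilon > 0$, depending only on the geometry, such that the solution operator for the Dirichlet Laplacian maps $L^2(\Omega)$ into $H^{3/2 + \epsilon}(\Omega)$ (see e.g. Grisvard, or Dauge). Applying this with data $\tilde f \in L^2(\Omega)$ immediately yields $u \in H^{3/2 + \delta}(\Omega)$ for some $\delta > 0$ (one may take $\delta = \min\{\epsilon, 1/2\}$). This is essentially a one-line bootstrap once the reformulation is in place.

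The only real obstacle is a matter of citing the correct form of the polyhedral regularity result: on a nonconvex polyhedron the Laplacian is not $H^2$-regular, so one must be careful to claim only the fractional exponent $3/2 + \delta$ rather than $2$, and one must ensure the relevant shift theorem is available for the dimensions $d = 2, 3$ considered here. Since the problem statement only asserts existence of some $\delta > 0$ with the same regularity "as the Poisson problem," no sharp tracking of constants or of the dependence of $\delta$ on the opening angles is needed, and no $\kappa$-dependence enters the regularity exponent (it only enters the norm estimate, which is addressed separately in the stability discussion). Thus the proof is short: reformulate as a Poisson problem with $L^2$ data, then quote elliptic shift regularity on polyhedra.
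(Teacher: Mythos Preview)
Your proposal is correct and follows essentially the same route as the paper: rewrite the Helmholtz equation so that $u$ is the weak solution of a Dirichlet Poisson problem with right-hand side $f + (\kappa^2 - \mathrm{i}\sigma)u \in L^2(\Omega)$, then invoke the polyhedral shift theorem (Grisvard, Dauge) to conclude $u \in H^{3/2+\delta}(\Omega)$. Your write-up is in fact slightly more explicit than the paper's (you spell out why $\tilde f \in L^2$ and distinguish the convex and nonconvex cases), but the argument is the same.
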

\begin{proof} Clearly $u$ is also the weak solution to the problem

\begin{align*}
\Delta u & = (\kappa^2 - \mathrm{i} \sigma) u + f  \quad \mbox{in } \Omega, \\
u & = 0 \quad \mbox{on } \partial \Omega.
\end{align*}

\noindent This is a Poisson problem with the right hand side in $L^2(\Omega)$. Hence, the regularity of $u$ follows directly from the regularity theory for the Poisson equation, see e.g. \cite{Gr:92,Da:88}.
\end{proof}

The parameter $\delta$ in the above theorem is dependent on the shape of the polyhedral domain $\Omega$. For example, in convex domains $\delta = 1/2$. This dependency is analyzed carefully in \cite{Gr:92,Da:88}

In addition to the above regularity result, we need the stability estimate 

\begin{equation*}
\| u \|_{3/2+\delta} \leq C_S \| f\|_0.
\end{equation*}

\noindent Our interest is especially in the $\kappa$-dependency of the constant $C_S$. Studying this dependency for a general $\sigma$ is very difficult. Hence, we will give the estimate only for the case $\sigma \in \mathbb{R}$.

\begin{theorem} 
\label{th:stability}
Let $f \in L^2(\Omega)$, $\kappa \in \mathbb{R}$, $\sigma \in \mathbb{R}$ and let $u$ be the weak solution to (\ref{eq:strong_problem}). Then there exist a constant $C > 0$, independent on $\kappa$ and $\sigma$, such that

\begin{equation*}
\|u \|_{3/2 + \delta} \leq C \left( 1 + \frac{\kappa^2}{\sigma} \right) \| f \|_0.
\end{equation*}

\noindent for some $\delta \in (0,1/2]$.

\end{theorem}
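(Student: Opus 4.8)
The plan is to deduce the estimate from the elliptic regularity theory for the Poisson equation, already used in the proof of Theorem~\ref{th:regularity}, once an a priori $L^2$ bound on $u$ with explicit $\kappa$- and $\sigma$-dependence is available. Writing the equation in the form $\Delta u = (\kappa^2 - \mathrm{i}\sigma) u + f =: g$ as in that proof, we have $g \in L^2(\Omega)$ and the Poisson estimate yields
\[
\| u \|_{3/2+\delta} \le C_P \, \| g \|_0 ,
\]
where $C_P$ and the admissible exponent $\delta$ depend only on $\Omega$; in particular $C_P$ is independent of $\kappa$ and $\sigma$, since all the zeroth-order data of the original problem has been moved to the right-hand side $g$. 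The exponent is the one of Theorem~\ref{th:regularity}, truncated at $1/2$, which is forced by $3/2+\delta \le 2$ since $g$ is only controlled in $L^2$. Because $|\kappa^2 - \mathrm{i}\sigma| = (\kappa^4 + \sigma^2)^{1/2} \le \kappa^2 + \sigma$, the triangle inequality gives $\| g \|_0 \le (\kappa^2 + \sigma)\| u \|_0 + \| f \|_0$, so everything reduces to bounding $\| u \|_0$.

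For the $L^2$ bound I would test the weak form (\ref{eq:weak_problem}) with $v = u$. Using (\ref{eq:sesquilinear_form}) and that $\sigma$ is a positive real constant,
\[
\| \nabla u \|_0^2 - \kappa^2 \| u \|_0^2 + \mathrm{i}\sigma \| u \|_0^2 = (f,u).
\]
Taking imaginary parts, $\sigma \| u \|_0^2 = \Im (f,u) \le \| f \|_0 \| u \|_0$ by the Cauchy--Schwarz inequality, hence $\| u \|_0 \le \sigma^{-1}\| f \|_0$. (Taking real parts would likewise produce an $H^1$ bound, but it is not needed here.)

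Combining the two steps,
\[
\| u \|_{3/2+\delta} \le C_P\left( \frac{\kappa^2+\sigma}{\sigma} + 1 \right)\| f \|_0 = C_P\left( 2 + \frac{\kappa^2}{\sigma} \right)\| f \|_0 \le 2 C_P\left( 1 + \frac{\kappa^2}{\sigma} \right)\| f \|_0 ,
\]
which is the assertion with $C = 2 C_P$. There is no substantial obstacle in the argument; the one point worth double-checking is precisely that the Poisson regularity constant does not secretly depend on $\kappa$ or $\sigma$ — which holds because those parameters enter only through the datum $g$ — and that the range of $\delta$ respects the geometric restriction of Theorem~\ref{th:regularity}.
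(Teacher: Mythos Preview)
Your argument is correct. The approach, however, differs from the paper's. The paper expands $u$ in the Dirichlet eigenbasis $\{\varphi_i\}$ of $-\Delta$, obtaining the explicit representation $-\Delta u = \sum_i \lambda_i(\lambda_i-\kappa^2+\mathrm{i}\sigma)^{-1}(f,\varphi_i)\varphi_i$, and then bounds the multiplier $\lambda_i / |\lambda_i-\kappa^2+\mathrm{i}\sigma|$ uniformly by $\sqrt{\kappa^4+\sigma^2}/\sigma$ via an elementary calculus argument; Poisson regularity is applied only at the end. You avoid spectral theory altogether: the key step is testing the weak form with $v=u$ and reading off $\sigma\|u\|_0^2 = \Im(f,u)$, which immediately gives the $L^2$ bound. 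This is more elementary and also more robust, since it would extend verbatim to variable $\sigma$ with a uniform positive lower bound, whereas the eigenfunction diagonalisation genuinely needs $\sigma$ constant. The paper's route, on the other hand, makes the sharp constant $\sqrt{\kappa^4+\sigma^2}/\sigma$ visible before the final simplification. Both funnel into the same Poisson regularity step, so the dependence on $\Omega$ through $\delta$ and $C_P$ is identical.
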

\begin{proof} Let $\{ \varphi_i \}_{i=1}^\infty$ be the eigenfunctions of the Laplace operator, i.e.,

\begin{equation*}
-\Delta \varphi_i = \lambda_i \varphi_i , \quad \mbox{and}\quad (\varphi_i,\varphi_j ) = \delta_{ij}. 
\end{equation*}

\noindent In this basis, the solution to the Helmholtz equation is 

\begin{equation*}
u = \sum_{i =1}^\infty \frac{1}{ \lambda_i - \kappa^2 + \mathrm{i}\sigma } (f,\varphi_i) \varphi_i
\end{equation*}

\noindent The solution satisfies the Poisson problem

\begin{equation*}
-\Delta u = \sum_{i =1}^\infty \frac{\lambda_i}{ \lambda_i - \kappa^2 + \mathrm{i}\sigma } (f,\varphi_i) \varphi_i.
\end{equation*}

\noindent The $L^2(\Omega)$-norm of the right hand side is 

\begin{equation*}
\sqrt{ \sum_{i =1}^\infty  \left( \frac{\lambda_i}{ \left |  \lambda_i - \kappa^2 + \mathrm{i}\sigma \right | } \right)^2 (f,\varphi_i)^2} 
\end{equation*}

\noindent Elementary computations give, 

\begin{equation*}
\frac{x}{ \left | x - \kappa^2 + \mathrm{i}\sigma \right | } \leq \frac{\sqrt{\kappa^4 + \sigma^2} }{\sigma }, \quad \forall x > 0.
\end{equation*}

\noindent By the spectral theory of Laplace opertor, $\lambda_i > 0, i >0$. Hence, we get,

\begin{equation*}
\sqrt{ \sum_{i =1}^\infty  \left( \frac{\lambda_i}{ \left |  \lambda_i - \kappa^2 + \mathrm{i}\sigma \right | } \right)^2 (f,\varphi_i)^2} \leq  \frac{\sqrt{\kappa^4 + \sigma^2} }{\sigma } \sqrt{ \sum_{i =1}^\infty   (f,\varphi_i)^2} \leq C \left( 1 + \frac{\kappa^2}{\sigma } \right) \|f \|_0^2.
\end{equation*}
\noindent Now, the regularity theory for the Poisson problem  yields the desired estimate.
\end{proof}

In the following, we will consider solving the linear system arising from the finite element approximation of the weak problem (\ref{eq:weak_problem}) with first order elements, i.e., the finite element space

\begin{equation}
V_h = \{ \; u \in H^1_0(\Omega) \; | \; v_{| K} \in P_1(K) \; \forall K \; \in \mathcal{T}_h \; \}.
\end{equation}

\noindent The triangulation or tetrahedralization $ \mathcal{T}_h$ is assumed to be quasi-uniform (see \cite{Br:2007}). In this space, the finite element approximation is: Find $u_h \in V_h$ such that

\begin{equation}
a(u_h,v) = (f,v) \quad \forall v \in V_h.
\end{equation}

\noindent This problem leads to the linear system 

\begin{equation}
\label{eq:linear_system}
A x = b. 
\end{equation}

\noindent Under our assumptions on $\sigma$ and $\kappa$, the matrix $A \in \mathbb{C}^{n\times n}$ will be non-normal, i.e.,

\begin{equation}
\label{eq:normal}
\mat{A} \mat{A}^* \neq \mat{A}^* \mat{A}.
\end{equation}

\noindent and indefinite.

In the following, we work with functions from the finite element space $V_h$ and the corresponding coefficient vectors. All functions from the finite element space can be expressed as $ u = \sum (\vec{x}_u)_i \varphi_i$, where $\varphi_i$ are the finite element basis functions. The vector of coefficients for the finite element function~$u$ will be denoted by $\vec{x}_u$. Using this notation, the system matrix $A$ is related to the sesquilinear form (\ref{eq:sesquilinear_form}) via $A_{j,i} = a(\varphi_i,\varphi_j)$, i.e., we have

\begin{equation*}
a(u,v) = \vec{x}_v^* A \vec{x}_u \quad \forall u,v \in V_h.
\end{equation*}

In the following, the norm equivalence between the Euclidian norm of vector $\vec{x}_u$ and the $L^2(\Omega)$-norm of a function $u$ is used.

\begin{lemma} Let $u \in V_h$ and $\Omega \subset \mathbb{R}^d$. Then there exists positive constants $c,C >0$, independent of $h$, such that
 
\label{le:norm_eq}
\begin{equation}
ch^d \left| \vec{x}_u \right|^2 \leq \| u\|_0^2 \leq Ch^d \left| \vec{x}_u \right|^2
\end{equation}
\end{lemma}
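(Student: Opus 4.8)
The plan is to recognize that, by definition of the finite element mass matrix $\mat{M}$ with entries $M_{j,i} = (\varphi_i,\varphi_j)$, one has $\|u\|_0^2 = \vec{x}_u^* \mat{M} \vec{x}_u$, so the claim is equivalent to the two-sided eigenvalue bound $c h^d \le \lambda_{\min}(\mat{M}) \le \lambda_{\max}(\mat{M}) \le C h^d$ with $c,C>0$ independent of $h$. I would establish this by the classical element-wise scaling argument: pass to a fixed reference simplex, use finite dimensionality there, and then reassemble.

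First I would localize: for $u \in V_h$, $\|u\|_0^2 = \sum_{K \in \mathcal{T}_h} \|u\|_{0,K}^2$. Fix a reference simplex $\hat{K}$ and, for $K \in \mathcal{T}_h$, let $F_K$ be the affine bijection $\hat K \to K$; setting $\hat u = u \circ F_K \in P_1(\hat K)$ gives $\|u\|_{0,K}^2 = |\det DF_K|\,\|\hat u\|_{0,\hat K}^2$. Since $P_1(\hat K)$ is finite dimensional, all norms on it are equivalent, so there are constants $\hat c,\hat C>0$ depending only on $\hat K$ with $\hat c\,|\vec{y}_K|^2 \le \|\hat u\|_{0,\hat K}^2 \le \hat C\,|\vec{y}_K|^2$, where $\vec{y}_K$ is the vector of nodal values of $\hat u$, i.e. the subvector of $\vec{x}_u$ indexed by the vertices of $K$. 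Quasi-uniformity of $\mathcal{T}_h$ gives $|\det DF_K| = d!\,|K|$ together with $\gamma_1 h^d \le |K| \le \gamma_2 h^d$, where $\gamma_1,\gamma_2$ depend only on the shape-regularity of the family, so $|\det DF_K|$ is comparable to $h^d$ uniformly in $K$ and $h$.

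Summing over $K$ then yields $\bigl(\min_K |\det DF_K|\bigr)\hat c \sum_K |\vec{y}_K|^2 \le \|u\|_0^2 \le \bigl(\max_K |\det DF_K|\bigr)\hat C \sum_K |\vec{y}_K|^2$. Each coefficient $(\vec{x}_u)_i$ appears among the $\vec{y}_K$ for exactly those elements $K$ containing the $i$-th vertex, and shape regularity bounds this number by a constant $N_0$; hence $|\vec{x}_u|^2 \le \sum_K |\vec{y}_K|^2 \le N_0 |\vec{x}_u|^2$. Combining the last two displays gives the stated inequalities with $c = \gamma_1 d!\,\hat c$ and $C = \gamma_2 d!\,\hat C N_0$, all independent of $h$. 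The only point that needs genuine care is this assembly bookkeeping — verifying that the overlap number $N_0$ and the constants $\gamma_1,\gamma_2$ in $|K| \sim h^d$ depend solely on the quasi-uniformity and shape-regularity of the family $\{\mathcal{T}_h\}$ and not on $h$ itself; the affine change of variables and the norm equivalence on the fixed reference element are entirely routine.
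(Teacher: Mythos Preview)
Your argument is correct and is precisely the standard scaling-to-the-reference-element proof found in the textbook the paper cites; the paper itself does not give a proof but simply refers to \cite{Br:2007}. There is nothing to compare beyond noting that you have spelled out what the reference contains.
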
 
\begin{proof}
See, e.g., \cite{Br:2007}
\end{proof}

\section{Convergence of GMRES}

The GMRES algorithm \cite{SaSc:86} approximately solves the linear system

\begin{equation}
A \vec{x} = \vec{b}
\end{equation}

\noindent by iteratively constructing the minimizer of the residual, $\left | A \vec{x} - \vec{b} \right|$, from the Krylov subspace $\mathcal{K}_m = \{\vec{b}, A \vec{b}, A^2 \vec{b},\ldots, A^{m-1} \vec{b} \}$. In exact arithmetic, the GMRES algorithm finds the exact solution in at most $n$~iterations for a general invertible matrix $A \in \mathbb{C}^{n \times n}$.

The convergence of the GMRES iteration can be improved by using right, left, or split preconditioners. The split preconditioner is used, when the preconditioner matrix can be decomposed into two parts, e.g. by using the Cholesky decomposition. As obtaining suitable decomposition for our preconditioner matrices is very costly,  using split preconditioner is not an option in our case.

Left preconditioning in GMRES leads to minimizing the residual 

\begin{equation*}
 \left | B A \vec{x} - B \vec{b} \right|.
\end{equation*}

\noindent In our case, the preconditioner matrix $B$ is ill-posed, so the iterative solution might be incorrect even for very small values of the residual. Due to this fact, we will only consider right preconditioning. In this case the residual to be minimized is

\begin{equation*}
\left | A B \tilde{\vec{x}} - \vec{b} \right|.
\end{equation*}

\noindent The actual solution is $\vec{x} = B \tilde{ \vec{x}}$, hence the right preconditioned GMRES minimizes the actual residual, regardless of the conditioning of the preconditioner matrix $B$. On a sufficiently fine grid, this quantity can be easily related to the error between the iterative solution and the exact solution to the linear system measured in the $H^1(\Omega)$-norm.

The convergence of the GMRES method is related to the minimization problem (see e.g. \cite{Gr:1997})

\begin{equation}
\label{eq:GMRES_minimization}
| \vec{r}_i | = \min_{p \in \tilde{P_i}} | p(A) \vec{r}_0 |, 
\end{equation}  

\noindent in which $\vec{r}_i$ is the residual on step $i$ and $\tilde{P}_i$ a monic polynomial of order $i$. As solving the minimization problem (\ref{eq:GMRES_minimization}) is far more costly than solving the original linear system, it is not a practical measure of the GMRES convergence rate. More useful bounds have been derived from (\ref{eq:GMRES_minimization}) in several alternative ways, depending on the properties of matrix $\mat{A}$ (see, e.g, \cite{Sa:2003,Gr:1997}). A good comparison of different GMRES convergence criterions is given in \cite{Em:1999}. 

If  the matrix $\mat{A}$~is normal, i.e., it satisfies equation (\ref{eq:normal}), the convergence is characterized only by the location of the eigenvalues of $A$. When the matrix $A$ is non-normal but diagonalizable, the eigenvectors of the matrix are not orthogonal and they have an effect on the convergence in addition to the location of the eigenvalues. The convergence of GMRES for general non-normal matrix equations can be also related to the properties of the pseudospectrum \cite{TrEm:2005} or the field of values \cite{Gr:1997}.  The FOV is defined as the set

\begin{equation}
\label{eq:FOV_set}
\mathcal{F}(A) = \left\{ \frac{ \vec{x}^* A \vec{x} }{ \vec{x}^* \vec{x} } \; \bigg | \;\vec{x} \in \mathbb{C}^n, \; \vec{x} \neq 0 \; \right\}.
\end{equation}

\noindent  Due to the connection between vectors of coefficients and finite element functions, the FOV is naturally related to the properties of the sesquilinear form $a(u,v)$. Hence,  we have chosen to use a FOV based convergence criterion in our analysis. 
 
The convergence of GMRES is related to the dimensions and the location of the set (\ref{eq:FOV_set}) in the complex plane.  Several different convergence estimates based on the FOV can be derived. A simple estimate is given in \cite{Gr:1997}, let $D = \left\{ \; z \in \mathbb{C} \; | \; \left| z - c \right| \leq s \; \right\}$  be a disc containing the FOV, but not the origin. Then, we have the convergence estimate

\begin{equation}
\label{eq:FOV_conv}
| \vec{r}_i | \leq \left( \frac{s}{| c | } \right)^i | \vec{r}_0 |.
\end{equation}

\noindent  In this work, we are mainly interested in studying the dependence of the convergence of preconditioned GMRES on the mesh size $h$ as well as parameters $\sigma$ and $\kappa$. As the bound (\ref{eq:FOV_conv}) remains unchanged under scaling of the coordinate system, it will be sufficient to study how the relative size of the FOV depends on these parameters.

The properties of the FOV have been extensively studied in the literature (see, e.g., \cite{HoJo:1991,Gr:1997,GuRa:97}). In the numerical results section, we will compute FOV for the preconditioned linear systems by using the procedure from \cite{Gr:1997}. This procedure is based on the the rotation property of the FOV, 

\begin{equation*}
\mathcal{F}( \mat{A} ) = \exp{-\mathrm{i} \theta} \mathcal{F} ( \exp{ \mathrm{i} \theta } \mat{A} )
\end{equation*}

\noindent and on the fact that  $\mathcal{F}( \mat{A} )$ is located on the left half plane from the largest eigenvalue of 

\begin{equation*}
H(\mat{A} ) = \frac{1}{2} \left( \mat{A} + \mat{A}^* \right).
\end{equation*}

\noindent By computing the largest eigenvalues for several rotated matrices $\exp{\mathrm{i} \theta} \mat{A}$, we will obtain a set containing $\mathcal{F}(\mat{A} )$.

\section{Laplace preconditioner for a  constant $\sigma$}

In this section, we consider using the finite element solution of the Poisson equation as a preconditioner for the Helmholtz equation. We will give bounds for the FOV of the preconditioned system, which gives a convergence estimate for GMRES via equation (\ref{eq:FOV_conv}). Our analysis is valid when the parameter $\sigma \in \mathbb{R}, \sigma > 0$. 

The preconditioner $P:V_h \rightarrow V_h$ is defined as: For each $u \in V_h$ find $Pu \in V_h$ such that
 
\begin{equation}
\label{eq:L_prec_oper}
( \nabla P u , \nabla v) = ( u ,v ) \quad \forall \quad v \in V_h.
\end{equation}

\noindent The matrix form of the operator $P$ is $ \mat{K}^{-1} \mat{M}$, where $\mat{K}$ is the stiffness matrix and $\mat{M}$ the mass matrix, i.e.,

\begin{equation}
\label{eq:K_and_M_matrix}
\vec{x}_v^* K \vec{x}_u = (\nabla u, \nabla v) \quad \mbox{and} \quad \vec{x}_v^* M \vec{x}_u = (u, v) \quad \forall u,v \in V_h.
\end{equation}

\noindent The right preconditioned linear system has the form 

\begin{equation}
\label{eq:L_prec}
\mat{A} \mat{K}^{-1} \mat{M} \tilde{\vec{x} } = \vec{b} .
\end{equation} 

\noindent We immediately observe, that

\begin{equation*}
\vec{x}_u^* \mat{A} \mat{K}^{-1} \mat{M} \vec{x}_u = a(Pu, u) \quad \forall u \in V_h.
 \end{equation*}

\noindent Using this connection, the FOV set of the preconditioned system can be written as

\begin{equation}
\label{eq:FOV_laplace}
\mathcal{F}(\mat{A} \mat{K}^{-1} \mat{M} ) = \left\{ \frac{ a(Pu,u) }{ \vec{x}_u^* \vec{x}_u} \; \bigg | \; \vec{x}_u \in \mathbb{C}^n, \; \vec{x}_u \neq 0 \; \right \}.
\end{equation}

\noindent To give bounds for this set, we will study the sesquilinear form $a(Pu,u)$ instead of working directly with the matrix $\mat{A} \mat{K}^{-1} \mat{M}$. A similar connection is also the basis for the derivation of the convergence estimates for additive Schwarz methods applied to elliptic problems. The main difference to our case is that we need to obtain estimates between the sesquilinear form and the Euclidian vector norm. For elliptic problems, similar estimates are derived in the $H^1(\Omega)$-norm.

By the definition of the sesquilinear form (\ref{eq:sesquilinear_form}), we have 

\begin{equation}
\label{eq:L_aPuu}
a ( P u, u) = (\nabla  P u, \nabla u) - \kappa^2 (Pu,u) + \mathrm{i} \sigma ( Pu,u).
\end{equation}

\noindent To bound the terms above, we use the following elementary result.

\begin{lemma}
\label{le:L_tools}
Let $P$ be defined as in (\ref{eq:L_prec_oper}) and $u \in V_h$. Then 

\begin{equation}
\label{eq:L_aPuu:1}
(Pu,u) = \| \nabla P u \|_0^2.
\end{equation}

\noindent and there exist a constant $C > 0$, independent of $h$,~$\sigma$,~and~$\kappa$, such that 

\begin{equation*}
\| \nabla P u \|_0 \leq C \| u \|_0.
\end{equation*}

\end{lemma}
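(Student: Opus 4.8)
The plan is to extract both statements directly from the defining relation~(\ref{eq:L_prec_oper}) by making judicious choices of the test function $v$, together with the Poincar\'e--Friedrichs inequality on $\Omega$.

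First I would prove~(\ref{eq:L_aPuu:1}). Since $Pu \in V_h \subset H^1_0(\Omega)$, it is an admissible test function in~(\ref{eq:L_prec_oper}). Substituting $v = Pu$ gives $(\nabla Pu, \nabla Pu) = (u, Pu)$, i.e. $\|\nabla Pu\|_0^2 = (u,Pu)$. The left-hand side is real and nonnegative, so $(u,Pu)$ is real, whence $(Pu,u) = \overline{(u,Pu)} = (u,Pu) = \|\nabla Pu\|_0^2$, which is the claim. (If one prefers, one can equally substitute a real test function and argue on real and imaginary parts of $u$ separately; the point is simply that the $L^2$-pairing of $u$ with $Pu$ coincides with the Dirichlet energy of $Pu$.)

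For the second estimate I would combine~(\ref{eq:L_aPuu:1}) with Cauchy--Schwarz and Poincar\'e. From~(\ref{eq:L_aPuu:1}),
\begin{equation*}
\|\nabla Pu\|_0^2 = (Pu,u) \leq \|Pu\|_0 \, \|u\|_0 .
\end{equation*}
Because $Pu \in H^1_0(\Omega)$, the Poincar\'e--Friedrichs inequality yields $\|Pu\|_0 \leq C_\Omega \|\nabla Pu\|_0$ with a constant $C_\Omega$ depending only on the domain $\Omega$. Inserting this above and dividing by $\|\nabla Pu\|_0$ (the estimate being trivial when $\nabla Pu = 0$) gives $\|\nabla Pu\|_0 \leq C_\Omega \|u\|_0$, with $C := C_\Omega$ independent of $h$, $\sigma$ and $\kappa$.

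There is no genuine obstacle here; the only point requiring a word of care is the claim that $C$ is independent of the discretization parameters. This is immediate because the Poincar\'e constant used is the one for the continuous space $H^1_0(\Omega)$, which bounds the discrete quotient a fortiori since $V_h \subset H^1_0(\Omega)$; neither $\sigma$ nor $\kappa$ enters the definition of $P$ at all, so their absence from the constant is automatic.
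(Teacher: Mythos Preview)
Your proof is correct and follows essentially the same route as the paper: test with $v=Pu$ in~(\ref{eq:L_prec_oper}) to obtain~(\ref{eq:L_aPuu:1}), then combine Cauchy--Schwarz with the Poincar\'e--Friedrichs inequality to deduce the bound on $\|\nabla Pu\|_0$. Your added remarks on the reality of $(u,Pu)$ and on why the constant is independent of $h,\sigma,\kappa$ are welcome clarifications but do not change the argument.
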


\begin{proof} 
The equation (\ref{eq:L_aPuu:1}) follows directly from the definition of the operator $P$, equation (\ref{eq:L_prec_oper}),

\begin{equation*}
\| \nabla P u \|_0^2 = (u,P u) = ( P u, u).
\end{equation*}

\noindent Using the Cauchy-Schwartz inequality gives

\begin{equation*}
\| \nabla P u \|^2_0 \leq \| u \|_0 \| P u \|_0.
\end{equation*}

\noindent Applying the Poincare-Friedrichs inequality completes the proof.

\end{proof}

Using Lemma \ref{le:L_tools}, the definition of $P$, and equation (\ref{eq:L_aPuu}) gives 

\begin{equation}
\label{eq:L_aPuu2}
a(Pu,u) =  \| u \|_0^2 - \kappa^2 \| \nabla Pu \|_0^2 +  \mathrm{i} \sigma \| \nabla Pu \|_0^2.
\end{equation}

\noindent Based on this equation, it is straightforward to derive bounds for the FOV set. We begin with the obvious bounds.

\begin{theorem}
\label{th:L_rectangle}
There exists a constant $C>0$, independent of $h$, $\sigma$, and $\kappa$, such that

\begin{equation*}
\mathcal{F}(\mat{A} \mat{K}^{-1} \mat{M} )  \subset [C(1- \kappa^2) h^d,C h^d] \times [0, C \sigma h^d].
\end{equation*}

\noindent where $d$ is the spatial dimension.

\end{theorem}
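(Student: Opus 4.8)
The plan is to read off the bounds directly from the closed-form expression for $a(Pu,u)$ derived in equation (\ref{eq:L_aPuu2}), together with the norm equivalence in Lemma \ref{le:norm_eq} and the boundedness estimate in Lemma \ref{le:L_tools}. Since every point of $\mathcal{F}(\mat{A}\mat{K}^{-1}\mat{M})$ has the form $a(Pu,u)/(\vec{x}_u^*\vec{x}_u)$ for some nonzero $u \in V_h$, it suffices to estimate the real and imaginary parts of
\begin{equation*}
\frac{a(Pu,u)}{\vec{x}_u^*\vec{x}_u} = \frac{\|u\|_0^2 - \kappa^2 \|\nabla Pu\|_0^2 + \mathrm{i}\sigma \|\nabla Pu\|_0^2}{|\vec{x}_u|^2}
\end{equation*}
uniformly in $u$.

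First I would handle the imaginary part. It equals $\sigma \|\nabla Pu\|_0^2 / |\vec{x}_u|^2$, which is manifestly nonnegative, giving the lower bound $0$. For the upper bound, apply Lemma \ref{le:L_tools} to get $\|\nabla Pu\|_0^2 \leq C\|u\|_0^2$, and then the right-hand inequality of Lemma \ref{le:norm_eq}, $\|u\|_0^2 \leq C h^d |\vec{x}_u|^2$; combining these yields $\Im a(Pu,u)/|\vec{x}_u|^2 \leq C\sigma h^d$, as claimed.

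Next I would handle the real part, $(\|u\|_0^2 - \kappa^2\|\nabla Pu\|_0^2)/|\vec{x}_u|^2$. For the upper bound, drop the nonpositive term $-\kappa^2\|\nabla Pu\|_0^2$ and again use $\|u\|_0^2 \leq C h^d |\vec{x}_u|^2$, giving $\leq C h^d$. For the lower bound, I would bound $\|u\|_0^2 \geq c h^d |\vec{x}_u|^2$ from below (left inequality of Lemma \ref{le:norm_eq}) and bound the subtracted term above using $\|\nabla Pu\|_0^2 \leq C\|u\|_0^2 \leq C h^d|\vec{x}_u|^2$ again; this gives $\Re a(Pu,u)/|\vec{x}_u|^2 \geq (c - C\kappa^2) h^d \geq C'(1-\kappa^2)h^d$ after adjusting constants (the statement's constant $C$ absorbs both $c$ and the factor relating $c$ to the coefficient of $\kappa^2$, which is legitimate since we only need containment in a rectangle). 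Taking the common constant to be the maximum of the finitely many constants appearing completes the argument.

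I do not expect any genuine obstacle here: the theorem is an immediate corollary of (\ref{eq:L_aPuu2}) and the two cited lemmas, and the only mild subtlety is the bookkeeping of constants — in particular making sure the single symbol $C$ can be chosen to serve simultaneously in all four inequalities, which is fine because there are only finitely many of them and the containment statement is insensitive to enlarging $C$. One should also note implicitly that the FOV is defined over complex coefficient vectors, but since $\mat{K}$, $\mat{M}$ are real symmetric and the quadratic forms $\|u\|_0^2$, $\|\nabla Pu\|_0^2$ extend to Hermitian forms on $\mathbb{C}^n$, equation (\ref{eq:L_aPuu2}) and Lemmas \ref{le:norm_eq}, \ref{le:L_tools} all carry over verbatim to complex $\vec{x}_u$, so the estimates above are valid for every element of $\mathcal{F}(\mat{A}\mat{K}^{-1}\mat{M})$.
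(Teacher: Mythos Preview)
Your proposal is correct and follows essentially the same route as the paper: both proofs read off the real and imaginary parts from (\ref{eq:L_aPuu2}), use Lemma \ref{le:L_tools} to control $\|\nabla Pu\|_0^2$ by $\|u\|_0^2$, and then invoke Lemma \ref{le:norm_eq} to pass to $|\vec{x}_u|^2$. Your additional remarks on constant bookkeeping and on the extension to complex coefficient vectors go slightly beyond what the paper spells out, but do not change the argument.
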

\begin{proof}
By equation (\ref{eq:L_aPuu2}), we have 

\begin{equation}
\label{eq:L_th1_RE}
\Re \vec{x}_u^* \mat{A} \mat{K}^{-1} \mat{M} \vec{x}_u =  \| u \|_0^2 - \kappa^2 \| \nabla Pu \|_0^2 
\end{equation}

\noindent and

\begin{equation}
\label{eq:L_th1_IM}
\Im \vec{x}_u^* \mat{M} \mat{A}  \vec{x}_u =  \sigma \| \nabla Pu \|_0^2.
\end{equation}

\noindent In addition, we clearly have 

\begin{equation*}
\Re \vec{x}_u^* \mat{A} \mat{K}^{-1} \vec{x}_u  =  \| u \|_0^2 - \kappa^2 \| \nabla Pu \|_0^2 \leq \|u \|_0^2
\end{equation*}

\noindent and 

\begin{equation*}
\Im \vec{x}_u^* \mat{M} \mat{A} \vec{x}_u =  \sigma \| \nabla Pu \|_0^2 \geq 0.
\end{equation*}

\noindent Using Lemma \ref{le:L_tools} to estimate $\| \nabla P u\|_0$ in (\ref{eq:L_th1_RE}) and  (\ref{eq:L_th1_IM}), gives 

\begin{equation*}
\Re \vec{x}_u^* \mat{A} \mat{K}^{-1} \mat{M} \vec{x}_u \geq (1-C \kappa^2 ) \| u \|_0^2
\end{equation*}

\noindent and

\begin{equation*}
\Im \vec{x}_u^* \mat{M} \mat{A}  \vec{x}_u \leq C \sigma \| u \|_0^2.
\end{equation*}

\noindent The result follows from combining the above estimates with Lemma \ref{le:norm_eq}. 
\end{proof}

These bounds state that the FOV set is located inside a rectangle that contains the origin. In such a case, the convergence estimate (\ref{eq:FOV_conv}) does not deliver any information on the convergence of GMRES. Fortunately, we can improve the above bounds. 

\begin{theorem} 
\label{th:L_strip}
There exists a constant $C>0$, independent of $h$, $\sigma$ and $\kappa$, such that

\begin{equation}
\mathcal{F}(\mat{A} \mat{K}^{-1} \mat{M} )  \subset \left\{ z \in \mathbb{C} \; \bigg | \;  ch^d - \frac{\kappa^2}{\sigma} \Im z \leq \Re z \leq Ch^d - \frac{\kappa^2}{\sigma} \Im z   \; \right \}.
\end{equation}

\noindent where $d$ is the spatial dimension.

\end{theorem}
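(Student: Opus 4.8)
The plan is to exploit the exact identity (\ref{eq:L_aPuu2}) for $a(Pu,u)$, which already separates real and imaginary parts cleanly, and then to eliminate the common quantity $\|\nabla Pu\|_0^2$ between them. First I would read off from (\ref{eq:L_aPuu2}) that
\[
\Re a(Pu,u) = \| u \|_0^2 - \kappa^2 \| \nabla Pu \|_0^2, \qquad \Im a(Pu,u) = \sigma \| \nabla Pu \|_0^2 .
\]
Since $\sigma \in \mathbb{R}$, $\sigma > 0$, the second relation can be solved for $\| \nabla Pu \|_0^2 = \sigma^{-1} \Im a(Pu,u)$; substituting this into the first gives the single exact identity $\Re a(Pu,u) = \| u \|_0^2 - (\kappa^2/\sigma)\, \Im a(Pu,u)$, valid for every $u \in V_h$.

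Next I would pass to the Rayleigh-type quotient. By (\ref{eq:FOV_laplace}), a generic element of $\mathcal{F}(\mat{A}\mat{K}^{-1}\mat{M})$ has the form $z = a(Pu,u)/(\vec{x}_u^* \vec{x}_u)$ for some $\vec{x}_u \neq 0$. Dividing the identity above by $\vec{x}_u^* \vec{x}_u = |\vec{x}_u|^2$ yields
\[
\Re z + \frac{\kappa^2}{\sigma}\, \Im z = \frac{\| u \|_0^2}{|\vec{x}_u|^2}.
\]
Finally, applying the norm equivalence Lemma \ref{le:norm_eq}, $c h^d \le \| u \|_0^2/|\vec{x}_u|^2 \le C h^d$, pins the quantity $\Re z + (\kappa^2/\sigma)\,\Im z$ between $c h^d$ and $C h^d$, which is precisely the claimed strip.

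There is essentially no obstacle here beyond bookkeeping: the substance is that (\ref{eq:L_aPuu2}) gives an \emph{exact} affine relation between $\Re a(Pu,u)$, $\Im a(Pu,u)$ and $\| u \|_0^2$, so the FOV lies not merely in the rectangle of Theorem \ref{th:L_rectangle} but on a thin slab whose bounding lines have slope $-\sigma/\kappa^2$ in the $(\Re z,\Im z)$-plane. The only point requiring a word of care is that the argument genuinely uses $\sigma$ constant and strictly positive, so that division by $\sigma$ is legitimate and uniform over $V_h$; this is consistent with the standing assumption of this section.
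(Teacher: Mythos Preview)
Your proof is correct and follows essentially the same route as the paper: use the exact identity (\ref{eq:L_aPuu2}) to eliminate $\|\nabla Pu\|_0^2$ between the real and imaginary parts of $a(Pu,u)$, divide by $\vec{x}_u^*\vec{x}_u$, and invoke the norm equivalence Lemma~\ref{le:norm_eq}. The only cosmetic difference is that the paper first passes to the quotient $z$ and then eliminates, whereas you eliminate first and then divide; the substance is identical.
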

\begin{proof}

For each $z \in \mathcal{F}(AK^{-1}M )$ there exists a $\vec{x}_u \in \mathbb{C}^n$ and a corresponding $u \in V_h$ such that

\begin{equation}
\label{eq:L_th2_z}
z = \frac{ \vec{x}_u^* AK^{-1} M \vec{x}_u}{\vec{x}_u^*\vec{x}_u} = \frac{ a(Pu,u)}{\vec{x}_u^*\vec{x}_u}.
\end{equation}

\noindent  Taking the imaginary part of the above and using (\ref{eq:L_aPuu2}) yields 

\begin{equation}
\| \nabla P u\|_0^2 = \frac{ \Im z}{\sigma}  \vec{x}_u^*\vec{x}_u.
\end{equation}

\noindent Combining this equation with the real part of (\ref{eq:L_th2_z}) and using (\ref{eq:L_aPuu2}), we obtain

\begin{equation}
\Re z = \frac{\Re a(Pu,u)}{\vec{x}_u^*\vec{x}_u} = \frac{\| u \|_0^2}{\vec{x}_u^*\vec{x}_u} - \kappa^2 \frac{\Im z}{ \sigma}  
\end{equation}

\noindent Applying the Lemma \ref{le:norm_eq} completes the proof.

\end{proof}

Theorems \ref{th:L_rectangle} and \ref{th:L_strip} state that the FOV set for the preconditioned system is located at the intersection of a strip and a rectangle.  The intersection does not contain the origin, hence it can be used in connection with (\ref{eq:FOV_conv}) to give convergence estimates for the preconditioned GMRES method. As all the dimensions of the FOV have an $h^d$ - dependence, the relative size of the FOV does not change when the mesh is refined. Thus, GMRES for the preconditioned system will converge with the same rate independently of $h$. The convergence rate depends on the distance of the FOV set from origin and the size of the set. These parameters depend on $\frac{\kappa^2}{\sigma}$, $\kappa^2$ and $\sigma$, which will determine the convergence speed.

\section{Inexact Laplace Inverse}

In this section,  we consider replacing the exact solution of the linear system arising from discretization of the Poisson problem in the preconditioner (\ref{eq:L_prec}) by an approximate solution. The presented analysis is based on a perturbation argument and it is valid for inexactly solving (\ref{eq:L_prec_oper}) using a symmetric iterative method convergent in the $\| \cdot \|_0$ and $\| \cdot \|_1$ norms. Example of a method fitting to this category is the multigrid (MG) method (for simple convergence proofs, see e.g. \cite{Br:2007}). Our analysis indicates that the Poisson problem should be solved more accurately for large values of the parameter $\kappa$ for guaranteed convergence of the GMRES method.

In the following, we are solving the linear system

\begin{equation}
\label{eq:inexact_linear_system}
K\vec{x} = \vec{b}
\end{equation}

\noindent by using an iterative method. We shall denote one iteration cycle as $\tilde{K}^{-1}$ and $N$ cycles as $\tilde{K}^{-N}$. All iterations start from a zero initial guess, so that  the error after $N$ steps is $\vec{e}_N = \vec{x} -  \tilde{K}^{-N} \vec{b} $. Equation (\ref{eq:inexact_linear_system})  gives

\begin{equation*}
\vec{e}_N = ( \tilde{K}^{-N} - K^{-1} ) K \vec{x}.
\end{equation*} 

\noindent This motivates us to define an error propagation operator $E_N: V_h \rightarrow V_h$ such that

\begin{equation*}
e_{N} = E_N u.
\end{equation*}

\noindent The matrix form of this operator is

\begin{equation}
\label{eq:EN}
( \tilde{K}^{-N} - K^{-1} ) K.
\end{equation}

In the following, we assume that there exists constants $\gamma_0$ and $\gamma_1$, 

\begin{equation*}
0 \leq \gamma_i < 1, i = 0,1
\end{equation*}

\noindent as well as a constant $C>0$, independent on $\gamma_0$ and $\gamma_1$, such that 

\begin{equation}
\label{eq:ass_EN}
\| E_N u \|_1 \leq C \gamma^N_1 \| u \|_1 \quad \mbox{and} \quad \| E_N u \|_0 \leq C \gamma^N_0 \| u \|_0 \quad \forall u \in V_h.
\end{equation}

\noindent This simply means that the applied iteration converges in the $H^1(\Omega)$- and $L^2(\Omega)$-norms. Such an assumption is directly satisfied by several iterative methods, e.g., by the multigrid method.

We will denote the inexact preconditioner as  $\tilde{P}$. The matrix form of this operator is   

\begin{equation*}
\label{eq:ieL_prec}
\tilde{K}^{-N} M
\end{equation*}

\noindent The FOV set for the preconditioned system satisfies 

\begin{equation*}
\mathcal{F}( A \tilde{K}^{-N} M) \subseteq \mathcal{F}( A K^{-1} M) \oplus   \mathcal{F}( A ( \tilde{K}^{-N}-K^{-1} ) M).
\end{equation*}

\noindent  We estimate the size of this set by combining  the results from the previous section with a bound for the perturbation set 

\begin{equation}
\label{eq:FOV_perturbation}
\mathcal{F}( A ( \tilde{K}^{-N}-K^{-1} ) M).
\end{equation}

\noindent Using operator notation, bounding the perturbation set translates to giving bounds for

\begin{multline}
\label{eq:FOV_inexact}
a((\tilde{P} - P)u,u) = \\ (\nabla (\tilde{P} - P)u, \nabla u) - \kappa ( (\tilde{P} - P)u,u) + \mathrm{i}\sigma ((\tilde{P} - P)u,u).
\end{multline}

\noindent The last two terms in the above equation are estimated with the following lemma. 

\begin{lemma} There exists a positive constant $C > 0$, independent on $\kappa$,$\sigma$, $\gamma_0$ and $\gamma_1$, such that
\label{lemma:P1}
\begin{equation*}
| ( (\tilde{P} - P)u,u) | \leq C \gamma_1^N \|u \|^2_0 \quad \forall u \in V_h. 
\end{equation*}

\end{lemma}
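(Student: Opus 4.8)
The plan is to write the inner product $((\tilde{P}-P)u,u)$ in terms of the error propagation operator $E_N$ and then use the $L^2$-stability assumption in (\ref{eq:ass_EN}). First I observe that $\tilde{P}u - Pu$ has coefficient vector $(\tilde{K}^{-N} - K^{-1})M\vec{x}_u$, and by the definition of $E_N$ in (\ref{eq:EN}) we have $(\tilde{K}^{-N}-K^{-1}) = E_N K^{-1}$ (as operators), so the coefficient vector of $(\tilde{P}-P)u$ equals the coefficient vector of $E_N(Pu)$. Hence $(\tilde{P}-P)u = E_N(Pu)$ as functions in $V_h$.

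Next I would apply Cauchy--Schwarz in $L^2(\Omega)$:
\begin{equation*}
| ((\tilde{P}-P)u,u) | = | (E_N(Pu),u) | \leq \| E_N(Pu) \|_0 \, \| u \|_0.
\end{equation*}
Now I invoke the $L^2$-convergence bound from (\ref{eq:ass_EN}), namely $\| E_N w \|_0 \leq C\gamma_0^N \| w \|_0$ applied with $w = Pu$, to get $\| E_N(Pu) \|_0 \leq C\gamma_0^N \| Pu \|_0$. It remains to control $\| Pu \|_0$ by $\| u \|_0$: by the Poincar\'e--Friedrichs inequality $\| Pu \|_0 \leq C_P \| \nabla Pu \|_0$, and Lemma \ref{le:L_tools} gives $\| \nabla Pu \|_0 \leq C \| u \|_0$, so $\| Pu \|_0 \leq C \| u \|_0$ with $C$ independent of $h$, $\sigma$, $\kappa$. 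Combining the three estimates yields $| ((\tilde{P}-P)u,u) | \leq C\gamma_0^N \| u \|_0^2$. One slightly delicate point is that the statement is phrased with $\gamma_1^N$ rather than $\gamma_0^N$; since $\gamma_0, \gamma_1 \in [0,1)$ this is either a harmless relabeling or one replaces $\gamma_0$ by $\max(\gamma_0,\gamma_1)$, so the stated bound follows a fortiori.

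The main obstacle, such as it is, is purely bookkeeping: making sure the identity $(\tilde{P}-P)u = E_N(Pu)$ is set up correctly (one must be careful that $E_N$ is applied to $Pu$ and not to $u$ — the factor $K^{-1}$ in (\ref{eq:EN}) is exactly what converts the mass-matrix action into $Pu$), and tracking that every constant introduced is independent of $h$, $\sigma$, $\kappa$, $\gamma_0$, $\gamma_1$. No genuinely hard analysis is needed here; the content is entirely in the stability hypothesis (\ref{eq:ass_EN}) and the already-proved Lemma \ref{le:L_tools}.
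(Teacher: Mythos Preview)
Your identification $(\tilde{P}-P)u = E_N(Pu)$ is correct and gives a more direct route than the paper's. The paper instead passes through $\|\nabla(\tilde{P}-P)u\|_0$, writes it as $\sup_{v}\,(\nabla(\tilde{P}-P)u,\nabla v)/\|\nabla v\|_0$, uses the symmetry of $\tilde{K}^{-N}$ to rewrite the numerator as $(u, E_N v)$, and then applies~(\ref{eq:ass_EN}) to $E_N v$. Your approach avoids both the duality step and the appeal to symmetry of the iterative scheme at this point.

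There is, however, a real slip at the end. You prove the bound with $\gamma_0^N$, not $\gamma_1^N$, and your claim that one can ``replace $\gamma_0$ by $\max(\gamma_0,\gamma_1)$'' does not recover the stated inequality: if $\gamma_0 > \gamma_1$ then $\gamma_0^N > \gamma_1^N$ and your estimate is strictly weaker than what is asserted. The paper does not assume $\gamma_0 \leq \gamma_1$. The fix within your own framework is immediate --- use the $H^1$-contraction in~(\ref{eq:ass_EN}) rather than the $L^2$-one:
\[
\|E_N(Pu)\|_0 \;\leq\; C\,\|E_N(Pu)\|_1 \;\leq\; C\gamma_1^N\,\|Pu\|_1 \;\leq\; C\gamma_1^N\,\|\nabla Pu\|_0 \;\leq\; C\gamma_1^N\,\|u\|_0,
\]
where the first and third inequalities are Poincar\'e--Friedrichs and the last is Lemma~\ref{le:L_tools}. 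This yields exactly the $\gamma_1^N$ in the statement, and explains why Theorem~\ref{th:pert_bounds} carries $\gamma_1^N$ on the $\kappa^2$- and $\sigma$-terms while the gradient term in~(\ref{eq:FOV_inexact_estimate1}) carries $\gamma_0^N$.
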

\begin{proof}
Using the Cauchy-Schwartz and the Poincar\'e-Friedrichs inequalities gives

\begin{equation*}
| ( (\tilde{P} - P)u,u) | \leq C \| \nabla (\tilde{P} - P) u \|_0 \| u \|_0.
\end{equation*}
\noindent The semi-norm above can be evaluated as 

\begin{equation*}
\| \nabla (\tilde{P} - P) u \|_0  = \sup_{v \in V_h} \frac{ ( \nabla (\tilde{P} - P) u , \nabla v ) }{\| \nabla v \|_0}.
\end{equation*}

\noindent Using the matrix form of the error propagation operator (\ref{eq:EN}) and symmetry as well as definitions (\ref{eq:L_prec})  and (\ref{eq:ieL_prec}) gives 

\begin{equation}
( \nabla (\tilde{P} - P) u , \nabla v ) = \vec{x}^*_v K ( K^{-1} - \tilde{K}^{-N}) M \vec{x}_u = ( u, E_N v). 
\end{equation}

\noindent Hence, 

\begin{equation*}
\| \nabla (\tilde{P} - P) u \|_0  = \sup_{v \in V_h} \frac{ ( u , E_N v ) }{\| \nabla v \|_0}.
\end{equation*}

\noindent By the Poincar\'e-Friedrichs inequality and the assumption (\ref{eq:ass_EN}) 

\begin{equation*}
 \| \nabla (\tilde{P} - P) u \|_0   = \sup_{v \in V_h} \frac{ (u, E v) }{ \| \nabla v \|_0 } \leq C \gamma_1^N \| u \|_0,
\end{equation*}

\noindent which completes the proof.

\end{proof}

Estimating the first term in the right hand side of (\ref{eq:FOV_inexact}) is straightforward. The matrix form of the error propagation operator (\ref{eq:EN}) and Cauchy-Schwartz inequality yield

\begin{equation}
\label{eq:FOV_inexact_estimate1}
(\nabla (\tilde{P} - P)u, \nabla u)  = (u,Eu) \leq \| Eu \|_0 \| u \|_0 \leq \gamma_0^N \|u\|_0^2.
\end{equation}

\noindent Combining this equation with Lemma \ref{lemma:P1} yields the following theorem.

\begin{theorem} 
\label{th:pert_bounds}
There exists a constant $C>0$, independent of $\gamma_0$, $\gamma_1$, $\kappa$, $h$, and $\sigma$, such that 

\begin{equation*}
\Re \mathcal{F}( A ( \tilde{K}^{-N}-K^{-1} ) M) \subset \left[ -C h^d ( \gamma_0^N + \kappa^2 \gamma_1^N),  C h^d(\gamma_0^N + 
 \kappa^2 \gamma_1^N) \right]
\end{equation*}

\noindent \mbox{and}

\begin{equation*}
\Im \mathcal{F}( A ( \tilde{K}^{-N}-K^{-1} ) M) \subset \left[ -Ch^d (\gamma_0^N +\sigma \gamma_1^N), Ch^d(\gamma_0^N +  \sigma \gamma_1^N) \right].
\end{equation*}

\noindent where $d$ is the spatial dimension and $N$ the number of iterations.

\end{theorem}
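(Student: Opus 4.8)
The plan is to reconstruct Theorem \ref{th:pert_bounds} directly from the decomposition of the sesquilinear form in equation (\ref{eq:FOV_inexact}) together with the two estimates already established, namely Lemma \ref{lemma:P1} for the lower-order and imaginary terms and inequality (\ref{eq:FOV_inexact_estimate1}) for the gradient term. Fix a nonzero $\vec{x}_u \in \mathbb{C}^n$ with corresponding $u \in V_h$; then any point of $\mathcal{F}( A ( \tilde{K}^{-N}-K^{-1} ) M)$ has the form
\begin{equation*}
z = \frac{ a((\tilde{P} - P)u,u) }{ \vec{x}_u^* \vec{x}_u }.
\end{equation*}
The first step is to split $z$ into real and imaginary parts using (\ref{eq:FOV_inexact}): the real part is $\big( (\nabla (\tilde{P} - P)u, \nabla u) - \kappa^2 \Re ( (\tilde{P} - P)u,u) \big)/(\vec{x}_u^* \vec{x}_u)$ and the imaginary part is $\sigma \Im ( (\tilde{P} - P)u,u)/(\vec{x}_u^* \vec{x}_u)$. (Here I note that (\ref{eq:FOV_inexact}) as displayed has a $\kappa$ rather than $\kappa^2$ in front of the second term, which must be a typo for the $\kappa^2$ coming from the sesquilinear form (\ref{eq:sesquilinear_form}); I would carry $\kappa^2$ through, consistent with the statement of the theorem.)

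Second, I would bound the numerator of the real part: by (\ref{eq:FOV_inexact_estimate1}) the gradient term is at most $\gamma_0^N \|u\|_0^2$ in absolute value, and by Lemma \ref{lemma:P1} the term $\kappa^2 |( (\tilde{P} - P)u,u)|$ is at most $C \kappa^2 \gamma_1^N \|u\|_0^2$. Hence
\begin{equation*}
\left| \Re\, \vec{x}_u^* A ( \tilde{K}^{-N}-K^{-1} ) M \vec{x}_u \right| \leq C ( \gamma_0^N + \kappa^2 \gamma_1^N ) \| u \|_0^2 .
\end{equation*}
Third, similarly for the imaginary part Lemma \ref{lemma:P1} gives $|\Im\, \vec{x}_u^* A ( \tilde{K}^{-N}-K^{-1} ) M \vec{x}_u| \leq C \sigma \gamma_1^N \| u \|_0^2$; absorbing the (weaker) $\gamma_0^N$ contribution one may write the bound as $C(\gamma_0^N + \sigma \gamma_1^N)\|u\|_0^2$ to match the stated form. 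Finally, dividing by $\vec{x}_u^* \vec{x}_u$ and invoking the norm equivalence of Lemma \ref{le:norm_eq}, which converts $\|u\|_0^2 / (\vec{x}_u^* \vec{x}_u)$ into a quantity bounded by $C h^d$, yields the two claimed interval inclusions, with the constant $C$ depending only on the constants in Lemmas \ref{le:norm_eq}, \ref{le:L_tools}, \ref{lemma:P1} and (\ref{eq:ass_EN}), and in particular independent of $\gamma_0, \gamma_1, \kappa, h, \sigma$.

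There is no serious obstacle here: the theorem is essentially a bookkeeping corollary of the two preceding estimates plus Lemma \ref{le:norm_eq}. The only mild subtlety worth stating carefully is the handling of the $\gamma_0^N$ versus $\gamma_1^N$ terms in the imaginary part — one should note that since $0 \le \gamma_0 < 1$ the factor in (\ref{eq:FOV_inexact_estimate1}) only contributes to the real part, so the imaginary bound genuinely reads $C\sigma\gamma_1^N$, and writing it as $C(\gamma_0^N + \sigma\gamma_1^N)$ is simply a (slightly wasteful) way of presenting a single clean formula; one might alternatively just state $\Im \subset [-Ch^d \sigma \gamma_1^N, Ch^d \sigma \gamma_1^N]$. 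I would also remark that the set $\mathcal{F}( A ( \tilde{K}^{-N}-K^{-1} ) M)$ is symmetric about neither axis in general, so the two-sided (symmetric) intervals are the natural statement, obtained because the bounds are on absolute values of the real and imaginary parts.
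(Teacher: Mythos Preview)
Your real/imaginary split of $a((\tilde P - P)u,u)$ is not correct, and this is where the paper's proof differs in a substantive way. Writing $G = (\nabla(\tilde P - P)u,\nabla u)$ and $L = ((\tilde P - P)u,u)$, the identity $a((\tilde P - P)u,u) = G - \kappa^2 L + \mathrm{i}\sigma L$ gives
\[
\Re a((\tilde P - P)u,u) = \Re G - \kappa^2 \Re L - \sigma \Im L,\qquad
\Im a((\tilde P - P)u,u) = \Im G - \kappa^2 \Im L + \sigma \Re L,
\]
so your formula ``real part $= G - \kappa^2 \Re L$, imaginary part $= \sigma \Im L$'' requires both $\Im G = 0$ and $\Im L = 0$ and even then the imaginary part would be $\sigma L$, not $\sigma\Im L$. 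In fact $G$ is \emph{not} real in general: in matrix form $G = \vec{x}_u^* K(\tilde K^{-N}-K^{-1})M\vec{x}_u$, and the real matrix $K(\tilde K^{-N}-K^{-1})M$ is not symmetric, so for complex $\vec{x}_u$ this quadratic form has a nonzero imaginary part. The paper's proof hinges on the opposite observation: since the iterative method is assumed \emph{symmetric}, the matrix $M(\tilde K^{-N}-K^{-1})M$ is symmetric and hence $L$ is real. With $L\in\mathbb{R}$ one gets the clean split $\Re = \Re G - \kappa^2 L$ and $\Im = \Im G + \sigma L$, and then (\ref{eq:FOV_inexact_estimate1}) bounds $|\Re G|,|\Im G|\le |G|\le \gamma_0^N\|u\|_0^2$ while Lemma~\ref{lemma:P1} bounds $|L|$.

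Consequently, your remark that the $\gamma_0^N$ in the imaginary bound is ``wasteful'' and could be dropped is wrong: the term $\Im G$ genuinely contributes $\gamma_0^N\|u\|_0^2$ to the imaginary part, and the stated interval $[-Ch^d(\gamma_0^N+\sigma\gamma_1^N),\,Ch^d(\gamma_0^N+\sigma\gamma_1^N)]$ is the sharp form of the argument, not a cosmetic padding. To repair your proof you need only insert the observation that $L\in\mathbb{R}$ by symmetry of $\tilde K^{-N}$, redo the split accordingly, and then your use of (\ref{eq:FOV_inexact_estimate1}), Lemma~\ref{lemma:P1}, and Lemma~\ref{le:norm_eq} goes through exactly as in the paper.
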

\begin{proof} We will use the connection between the FOV set and the sesquilinear form to derive the bound. First, we observe that

\begin{equation*}
( ( \tilde P - P) u,u) = \vec{x}^*_u M (\tilde{K}^{-N} - K^{-1} ) M \vec{x}_u
\end{equation*}

\noindent as the applied iterative method is assumed symmetric, the term 

\begin{equation*}
\vec{x}^*_u M (\tilde{K}^{-N} - K^{-1} ) M \vec{x}_u 
\end{equation*}

\noindent is real valued. Hence,

\begin{equation*}
\Re \mathcal{F}( A ( \tilde{K}^{-N}-K^{-1} ) M) = \Re  (\nabla (\tilde{P} - P)u, \nabla u) - \kappa^2 ( (\tilde{P} - P)u,u)
\end{equation*}

\noindent and

\begin{equation*}
\Im \mathcal{F}( A ( \tilde{K}^{-N}-K^{-1} ) M) = \Im  (\nabla (\tilde{P} - P)u, \nabla u) + \sigma ( (\tilde{P} - P)u,u).
\end{equation*}

\noindent Combining these equations with the equation (\ref{eq:FOV_inexact_estimate1}), Lemma \ref{lemma:P1} and Lemma \ref{le:norm_eq}, completes the proof.

\end{proof}

Theorem \ref{th:pert_bounds} states that the perturbation set is located inside a rectangle. The dimensions of this rectangle are dependent on the number of iterations taken with the iterative scheme and on the parameters $\sigma$ and $\kappa$. The estimate states, how the size of the perturbation set $\mathcal{F}( A ( \tilde{K}^{-N}-K^{-1} ) M)$ converges to zero when the number of iterations $N$ is increased.

From theoretical point of view, the implication of Theorem \ref{th:pert_bounds} is that the number of iterations should be increased when the parameter $\kappa$ grows to keep the size of the perturbation set small and the origin outside the actual FOV. In section 7, we will numerically demonstrate that the presented analysis captures the behavior of the perturbation set. The number of GMRES iterations required to solve the preconditioned problem is strongly affected by the number of iterations $N$. However, the inclusion of the origin into the FOV seems to have a very small effect on the convergence of GMRES. This reflects the fact that the convergence estimate (\ref{eq:FOV_conv}) is only an upper bound for the required number of iterations.

\section{Two-level preconditioner for a general $\sigma$ }

\noindent In this section, we consider a two-level preconditioner combining a solution to the Poisson problem with a coarse grid correction. Our analysis allows the parameter $\sigma$ to be a function satisfying the assumption given in equation (\ref{eq:general_sigma}). 

The idea behind the two-level preconditioners for the Helmholtz equation is to use a coarse grid solution to eliminate all eigenfrequencies corresponding to an eigenvalue with a negative real part. Then a Poisson problem is solved to approximate the solution components corresponding to eigenvalues with a positive real part. Such methods are successful in keeping the number of iterations constant, but impose same mesh size constraints as for the computationl grid also on the coarse grid.

In previous works on this type of preconditioners \cite{BrPaLe:1993,CaWi:1993}, the $\kappa$-dependency of the coarse grid mesh size has not been explicitly studied. A widely acknowledged rule of thumb is that the coarse grid has to satisfy the same mesh size requirements as the original grid. We will verify this rule by giving bounds for the coarse grid mesh size $H$ depending on $\kappa$, $\sigma$ and the stability constant $C_S$. By Theorem \ref{th:stability}, the stability constant for $\sigma \in \mathbb{R}$ is $C_S =  \kappa^2 \sigma^{-1}$. As we will see, in the worst case this estimate leads to the requirement

\begin{equation*}
\kappa^3 H \ll 1 \quad
\end{equation*}

\noindent for the coarse grid mesh size $H$. Based on our analysis, it is easy to see that the same mesh size constraint should also be valid for the actual computational grid. One should note that the bound $\kappa^2 h \ll 1$ given in \cite{BaIh:95,BaIh:97} is obtained for a problem with different boundary conditions and hence a different stability estimate. In addition, our analytical results consider only the so-called asymptotic range, whereas the more elaborate analysis in \cite{BaIh:95,BaIh:97} take also into account the pre-asymptotic range, i.e. the case when the mesh size requirement is not satisfied.

Our preconditioner has the matrix form 

\begin{equation}
\label{eq:cg_prec}
R_H A_H^{-1} R_H^T M + K^{-1} ( I - A R_H A_H^{-1} R_H^T ) M,
\end{equation}

\noindent  where $A_H$ is the system matrix from the space $V_H$ and $K$ as well as $M$ are as defined in (\ref{eq:K_and_M_matrix}). The matrix $R_H$ is the prolongation operator from $V_H$ to $V_h$. The mass matrix is included to the preconditioner (\ref{eq:cg_prec}) to aid in formulating it as an operator in the following analysis. The term $R_H A_H^{-1} R_H^T M$ corresponds to solving the original problem in the space $V_H$ and the term $K^{-1} ( I - A R_H A_H^{-1} R_H^T) M$ to solving the Poisson problem for the residual.

As in the previous sections, we will derive an estimate for the FOV of the preconditioned linear system. In order to do this we will first interpret the preconditioner (\ref{eq:cg_prec}) as a sum of two operators, $P_H$ and $Q$. The operator $P_H$ corresponds to the solution of the original problem in the coarse space and $Q$ to the solution of the Poisson problem for the residual.

The operator $P_H: V_h \rightarrow V_H$ is defined as: For each $u\in V_h$ find $P_H u \in V_H$ such that

\begin{equation}
\label{eq:PHoper}
a( P_H u , v_H) = (u,v_H) \quad \forall v_H \in V_H. 
\end{equation}

The matrix form of this operator is $R_H A_H^{-1} R_H^T M$. To define the operator $Q$, we first introduce an $a(u,v)$ - orthogonal projection operator $I_H:V_h \rightarrow V_H$. This operator is defined as: For each $u\in V_h$ find $I_Hu \in V_H$ such that  

\begin{equation}
\label{eq:IHoper}
a( v, I_H u ) = a(v,u) \quad \forall v \in V_H.
\end{equation}

\noindent The matrix form of this operator is $R_H A_H^{-*} R_H^T A^{*}$. An immediate consequence of this definition is the orthogonality property

\begin{equation}
\label{eq:IHortho}
a(v_H,(I-I_H) u) = 0 \quad \forall v \in V_H.
\end{equation}

\noindent This property will be used frequently in the following analysis. 

The term $K^{-1} ( I - A R_H A_H^{-1} R_H^T ) M$  in the preconditioner (\ref{eq:cg_prec}) corresponds to the operator $Q:V_h \rightarrow V_h$ defined as: For each $u\in V_h$ find $Q \in V_h$ such that 

\begin{equation}
\label{eq:Qoper}
(\nabla Q u,\nabla v) = ( u, (I - I_H) v) \quad \forall v \in V_h.
\end{equation}

The main task in analyzing the two-level preconditioned is to derive a convergence result for the operator $Q$. This result is obtained by studying the properties of the projection operator $I_H$. The applied techniques are rather standard for the analysis of the finite element discretizations of the Helmholtz equation. However,  as we want to explicitly state the dependency of the required coarse grid mesh size on $H$, $\kappa$, $\sigma$, and $C_S$  some modifications to the approach used in previous works, \cite{Sc:74,BrPaLe:1993,CaWi:1993} has been made. In the derivation of the convergence estimate, we will use the standard nodal interpolation operator to the coarse space, $\pi_H$. For a function $u\in H^{3/2+\delta}(\Omega)$, $\delta \in (0,1/2]$ this operator has the approximation property: 

\begin{equation}
\label{eq:interpolation1}
\| u - \pi_H u \|_1 \leq C H^{ 1/2+\delta} |u|_{3/2+\delta} \quad \mbox{and} \quad \| u - \pi_H u \|_0 \leq C H^{3/2+\delta} |u|_{3/2+\delta},
\end{equation}

\noindent where the constant $C>0$ is independent of $H$. For a function $u\in H^1(\Omega)$, the convergence is obtained only in the $L^2(\Omega)$-norm

\begin{equation}
\label{eq:interpolation2}
\| u - \pi_H u \|_0 \leq CH \|\nabla u \|_0,
\end{equation}

\noindent where the constant $C>0$ is independent of the mesh size $H$. Proofs of these approximation results can be found, e.g., from \cite{Br:2007}. We begin with a standard convergence estimate for $I_H$.

\begin{lemma} There exists a constant $C>0$, independent of $h$, $H$, $C_S$, $\kappa$ and $\sigma$,0 such that
 
\label{th:cg_approximation} 
\begin{equation}
\| (I-I_H) u \|_0 \leq \frac{C C_S H^{1/2+\delta}}{ 1 - C C_S (\kappa^2 + \|\sigma\|_\infty )H^{3/2+\delta}   } \|\nabla (I-I_H) u \|_0
\end{equation}

\noindent for some $\delta \in (0,1/2]$ and $H$ sufficiently small.

\end{lemma}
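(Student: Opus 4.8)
The plan is to prove this via a standard Aubin--Nitsche duality argument, estimating $\|(I-I_H)u\|_0$ by testing against the solution of an adjoint problem. First I would set $w = (I-I_H)u$ and introduce the dual problem: find $\phi \in H_0^1(\Omega)$ such that $a(v,\phi) = (v,w)$ for all $v \in H_0^1(\Omega)$ (note the conjugate-linear slot is chosen to match the orthogonality (\ref{eq:IHortho})). By Theorems \ref{th:regularity} and \ref{th:stability}, $\phi \in H^{3/2+\delta}(\Omega)$ with $\|\phi\|_{3/2+\delta} \leq C C_S \|w\|_0$, where $C_S$ is the stability constant. Taking $v = w$ gives $\|w\|_0^2 = a(w,\phi)$, and using the Galerkin orthogonality (\ref{eq:IHortho}) — which says $a(v_H, w) = 0$ for all $v_H \in V_H$, equivalently (after conjugation, since $a$ need not be symmetric one must be slightly careful) we can subtract the interpolant — we get $\|w\|_0^2 = a(w, \phi - \pi_H \phi)$.

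Next I would expand $a(w,\phi-\pi_H\phi) = (\nabla w, \nabla(\phi-\pi_H\phi)) - \kappa^2(w,\phi-\pi_H\phi) + \mathrm{i}(\sigma w, \phi - \pi_H\phi)$ and bound each term by Cauchy--Schwarz. The first term is controlled by $\|\nabla w\|_0 \, \|\nabla(\phi-\pi_H\phi)\|_0 \leq \|\nabla w\|_0 \cdot C H^{1/2+\delta}|\phi|_{3/2+\delta}$ using (\ref{eq:interpolation1}). The second and third terms are controlled by $(\kappa^2 + \|\sigma\|_\infty)\|w\|_0 \, \|\phi - \pi_H\phi\|_0 \leq (\kappa^2+\|\sigma\|_\infty)\|w\|_0 \cdot CH^{3/2+\delta}|\phi|_{3/2+\delta}$, again by (\ref{eq:interpolation1}). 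Inserting the stability bound $|\phi|_{3/2+\delta} \leq \|\phi\|_{3/2+\delta} \leq CC_S\|w\|_0$ into all three terms and dividing through by $\|w\|_0$ yields
\begin{equation*}
\|w\|_0 \leq CC_S H^{1/2+\delta}\|\nabla w\|_0 + CC_S(\kappa^2+\|\sigma\|_\infty)H^{3/2+\delta}\|w\|_0.
\end{equation*}
Provided $H$ is small enough that $CC_S(\kappa^2+\|\sigma\|_\infty)H^{3/2+\delta} < 1$, I would absorb the last term into the left-hand side, giving exactly the claimed estimate.

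The main subtlety — and the step I would be most careful with — is the handling of the non-Hermitian sesquilinear form in the duality step: one must set up the adjoint problem in the correct slot so that $a(v_H, (I-I_H)u) = 0$ can actually be used, and keep track of complex conjugates when moving between $a(\cdot,\cdot)$ and its adjoint. A secondary point is verifying that $\phi$ indeed inherits the $H^{3/2+\delta}$ regularity with the stated $C_S$-dependent constant; this is immediate from Theorems \ref{th:regularity}–\ref{th:stability} applied to the dual right-hand side $w \in L^2(\Omega) \subset L^2(\Omega)$, but it is worth noting that the same $C_S$ governs the adjoint because the adjoint problem differs from the primal one only by the sign of $\sigma$ (equivalently, complex conjugation), which does not change the magnitude in the stability estimate of Theorem \ref{th:stability}. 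Everything else is routine interpolation and Cauchy--Schwarz bookkeeping.
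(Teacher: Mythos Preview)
Your overall strategy---Aubin--Nitsche duality, the interpolation estimates (\ref{eq:interpolation1}), the stability bound, and absorption of the $H^{3/2+\delta}$ term---is exactly the paper's, and the bookkeeping after the orthogonality step is correct.

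There is, however, a genuine slip at precisely the point you flag as delicate. From your dual problem $a(v,\phi)=(v,w)$ and $v=w$ you correctly obtain $\|w\|_0^2=a(w,\phi)$, but the next claim $\|w\|_0^2=a(w,\phi-\pi_H\phi)$ would require $a(w,\pi_H\phi)=0$. The orthogonality (\ref{eq:IHortho}) reads $a(v_H,w)=0$: the coarse function sits in the \emph{first} slot, $w=(I-I_H)u$ in the second. Conjugating this gives $a^*(w,v_H)=0$ for the adjoint form (with $-\mathrm{i}\sigma$), not $a(w,v_H)=0$; since $a$ is not Hermitian, your identity does not follow. Your parenthetical ``after conjugation'' does not rescue it, because conjugation changes the form, not just the slot.

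The fix is to swap the slots, which is what the paper does: pose the dual problem with the unknown in the first argument, $a(\varphi,v)=(w,v)$ for all $v\in H_0^1(\Omega)$. This is simply the primal Helmholtz problem with right-hand side $w$, so Theorems~\ref{th:regularity}--\ref{th:stability} apply directly and give $\|\varphi\|_{3/2+\delta}\le C C_S\|w\|_0$. Testing with $v=w$ yields $\|w\|_0^2=a(\varphi,w)$, and now (\ref{eq:IHortho}) legitimately gives $a(\pi_H\varphi,w)=0$, hence $\|w\|_0^2=a(\varphi-\pi_H\varphi,w)$. From here your expansion, Cauchy--Schwarz, interpolation, and absorption steps go through verbatim with the arguments of $a$ interchanged.
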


\begin{proof}
The proof is based on the standard duality argument (see e.g. \cite{Br:2007}). The $L^2$-norm can be evaluated as

\begin{equation}
\label{eq:duality_L2}
\| (I - I_H) u \|_0 = \sup_{v \in V_h} \frac{ | ( (I-I_H) u, v)_0| }{\| v \|_0}.
\end{equation}

\noindent The dual problem is: Find $\varphi \in H^1_0(\Omega)$ such that

\begin{equation}
\label{eq:dual_problem}
a(\varphi, w) = (w,v) \quad \forall w \in H^1_0(\Omega).
\end{equation}

\noindent Choosing the test function as $w = (I-I_H)u$ and using the orthogonality property (\ref{eq:IHortho}) gives

\begin{equation*}
(v, (I-I_H)u) = a(\varphi-\pi_H \varphi,(I-I_H)u), 
\end{equation*}

\noindent where $\pi_H \varphi \in V_H $ is the nodal interpolant of $\varphi$. The interpolation error estimates given in equation (\ref{eq:interpolation1}) and regularity Theorem \ref{th:regularity} gives

\begin{multline*}
| a(  \varphi - \pi \varphi, (I-I_H)u) | \leq  C C_S H^{1/2+\delta} \|\nabla (I-I_H)u \|_0 \| v \|_0  \\ + C C_S (\kappa^2 + \| \sigma \|_\infty) H^{3/2+\delta }\| (I-I_H)u \|_0  \| v \|_0.
\end{multline*}

\noindent Combining this estimate with (\ref{eq:duality_L2}) and reorganizing the terms yields

\begin{equation*}
\| (I-I_H) u \|_0 \leq \frac{C C_S H^{1/2+\delta}}{ 1 - C C_S (\kappa^2+ \|\sigma\|_\infty)H^{3/2+\delta}   } \| \nabla (I-I_H) u \|_0,
\end{equation*}

\noindent which is valid for $H$ such that $1 - C C_S (\kappa^2 + \|\sigma\|_\infty)H^{3/2+\delta}  > 0$.
\end{proof}

The above Theorem gives a convergence result for the operator $I_H$ if the coarse grid satisfies the condition  

\begin{equation*}
1 - C C_S (\kappa^2 + \| \sigma \|_\infty )H^{3/2+\delta}  > 0
\end{equation*}

\noindent The parameter $\delta$ in the estimate depends on the regularity of the solution to the dual problem (\ref{eq:dual_problem}). As the load for the dual problem is always from the space $H^1(\Omega)$, the regularity depends completely on the shape of the domain. By Theorem \ref{th:regularity}, this dependency is the same as for the Poisson problem. In the following, we will state the coarse gird mesh size requirements under the assumption $\| \sigma \|_\infty \ll \kappa$. Under this assumption, the $H^2(\Omega)$ regularity of the dual problem, and in the light of the stability estimate given in Theorem \ref{th:stability}, the mesh size requirement given in Lemma \ref{th:cg_approximation} translates to 

\begin{equation*}
\kappa^2 H \ll 1.
\end{equation*}

In the following, we will use Lemma \ref{th:cg_approximation} in a shorter form: There exists a positive constant $C > 0 $ such that

\begin{equation*}
\| (I-I_H) u \|_0 \leq C C_S H^{1/2+\delta} \| \nabla (I-I_H) u \|_0.
\end{equation*}

\noindent The first implication of Lemma \ref{th:cg_approximation} is that the sesquilinear form 

\begin{equation*}
a( (I-I_H)u,(I-I_H)u )
\end{equation*}

\noindent behaves in some sence as an coersive operator. This results is required in the following analysis to obtain a $\kappa$-independent bound for the term $\| \nabla (I- I_H)u \|_0$.

\begin{lemma} 
\label{le:cg_elliptic}
Let $u \in H_0^1(\Omega)$ and let the coarse grid mesh size $H$ be such that

\begin{equation}
\label{eq:H_ass}
\kappa^2 C_S H^{3/2+\delta} \ll 1 \quad \mbox{and} \quad \kappa^2 C^2_S H^{1+2\delta} \ll 1.
\end{equation}

\noindent Then there exist a constant $\alpha > 0$, independent on $h$,$H$,$C_S$,$\kappa$, and $\sigma$, such that

\begin{equation*}
\Re a((I-I_H)u,(I-I_H)u) \geq \alpha \| \nabla(I-I_H) u \|_0^2.
\end{equation*}

\end{lemma}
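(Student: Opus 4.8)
The plan is to write $w := (I - I_H)u$ and to read off the real part of $a(w,w)$ directly from the definition \eqref{eq:sesquilinear_form}. Since $\|w\|_0^2$, $\|\nabla w\|_0^2$ and $(\sigma w,w)$ are all real, the term $\mathrm{i}(\sigma w,w)$ enters only the imaginary part of $a(w,w)$, so
\begin{equation*}
\Re a(w,w) = \|\nabla w\|_0^2 - \kappa^2 \|w\|_0^2 .
\end{equation*}
This identity is already the reason the target constant $\alpha$ may be taken independent of $\sigma$.

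Next I would invoke Lemma \ref{th:cg_approximation}. The first smallness condition in \eqref{eq:H_ass}, $\kappa^2 C_S H^{3/2+\delta} \ll 1$, together with the standing assumption $\|\sigma\|_\infty \ll \kappa$, keeps the denominator $1 - C C_S(\kappa^2 + \|\sigma\|_\infty)H^{3/2+\delta}$ of that lemma bounded away from zero, so its short form applies and gives
\begin{equation*}
\|w\|_0 = \|(I-I_H)u\|_0 \leq C C_S H^{1/2+\delta}\,\|\nabla (I-I_H)u\|_0 = C C_S H^{1/2+\delta}\,\|\nabla w\|_0 .
\end{equation*}
Squaring this and substituting into the identity for $\Re a(w,w)$ yields
\begin{equation*}
\Re a(w,w) \geq \bigl(1 - C^2 \kappa^2 C_S^2 H^{1+2\delta}\bigr)\,\|\nabla w\|_0^2 .
\end{equation*}
By the second condition in \eqref{eq:H_ass}, $\kappa^2 C_S^2 H^{1+2\delta} \ll 1$, the prefactor is bounded below by a fixed positive number; choosing $H$ small enough that it exceeds $\tfrac12$ and setting $\alpha = \tfrac12$ produces a constant depending on none of $h$, $H$, $C_S$, $\kappa$, $\sigma$, which is the assertion.

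I do not expect a genuine obstacle here: the proof is a short chain combining the algebraic identity for $\Re a(w,w)$ with the duality estimate of Lemma \ref{th:cg_approximation}. The only points that need care are that the two conditions in \eqref{eq:H_ass} play two distinct roles — the first makes the $L^2$ duality bound for $I-I_H$ usable, while the second makes the indefinite contribution $-\kappa^2\|w\|_0^2$ subcritical relative to $\|\nabla w\|_0^2$ — and that it is the constant $C$ coming out of Lemma \ref{th:cg_approximation} that gets squared, so that it is exactly smallness of $\kappa^2 C_S^2 H^{1+2\delta}$ (and not merely of $\kappa C_S H^{1/2+\delta}$) that the argument consumes.
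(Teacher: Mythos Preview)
Your proposal is correct and follows essentially the same route as the paper: expand $\Re a((I-I_H)u,(I-I_H)u)$ from the definition of $a(\cdot,\cdot)$, then absorb the $-\kappa^2\|(I-I_H)u\|_0^2$ term via the duality estimate of Lemma~\ref{th:cg_approximation}. Your additional remarks on the distinct roles of the two smallness conditions in \eqref{eq:H_ass} and on why $\alpha$ is automatically $\sigma$-independent are accurate elaborations of the same argument.
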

\begin{proof} By the definition of the sesquilinear form, we have

\begin{equation*}
\Re a( (I-I_H)u,(I-I_H) u) \\ = \| \nabla (I-I_H) u \|_0^2 - \kappa^2 \| (I-I_H) u \|_0^2.
\end{equation*}

\noindent Due to the assumptions (\ref{eq:H_ass}), we can apply Lemma \ref{th:cg_approximation}. This yields

\begin{equation*}
\Re a( (I-I_H)u,(I-I_H) u) \geq (1 - C \kappa^2 C_S^2 H^{1+2\delta} ) \| \nabla (I-I_H) \|^2_0.
\end{equation*}

\noindent Hence, the coarse grid mesh size has to satisfy the requirement 

\begin{equation*}
1 - C \kappa^2 C_S^2 H^{1+2\delta} > 0.
\end{equation*}

\noindent This is, $\kappa^2 C_S^2 H^{1+2\delta} \ll 1$.
\end{proof}

The major implication of the above result is the requirement 

\begin{equation*}
\kappa^2 C_S^2 H^{1+2\delta} \ll 1
\end{equation*}

\noindent For $\sigma \in \mathbb{R}, \sigma \ll \kappa^2$ and the $H^2(\Omega)$ regularity of the dual problem (\ref{eq:dual_problem}), this implies the requirement $\kappa^6 \sigma^{-2} H^2 \ll 1$. Hence, in the worst case the coarse grid has to satisfy the requirement 

\begin{equation*}
\kappa^3 H \ll 1.
\end{equation*}

\noindent A second important consequence of  Lemma \ref{le:cg_elliptic} is the $\kappa$-independent boundedness of the operator $I_H$. This result is required, when deriving convergence estimates for the operator $Q$.

\begin{lemma} \label{le:cg_bounded}
Let the coarse grid mesh size $H$ be such that the assumptions (\ref{eq:H_ass}) are satisfied and let $u\in H^1_0(\Omega)$. Then there exist a constant $C > 0$, independent on $h$,$H$,$C_S$, $\kappa$, and $\sigma$, such that

\begin{equation*}
\| \nabla (I-I_H) u \|_0 \leq C \| \nabla u \|_0.
\end{equation*}

\end{lemma}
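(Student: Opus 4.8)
The plan is to bound $\|\nabla(I-I_H)u\|_0$ by a coercivity/continuity argument applied to the sesquilinear form on $(I-I_H)u$, using Lemma~\ref{le:cg_elliptic} for the lower bound and the orthogonality property \eqref{eq:IHortho} to replace $I_H u$ by the nodal interpolant $\pi_H u$. Write $w = (I-I_H)u$. First I would invoke Lemma~\ref{le:cg_elliptic}, which under assumptions \eqref{eq:H_ass} gives $\alpha\|\nabla w\|_0^2 \le \Re a(w,w) \le |a(w,w)|$. The task is then to bound $|a(w,w)|$ from above by something controlled by $\|\nabla u\|_0^2$.

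Next I would exploit the orthogonality \eqref{eq:IHortho}: since $\pi_H u \in V_H$, we have $a(\pi_H u - I_H u, w) = 0$, hence $a(w,w) = a((I-I_H)u, w) = a((I-\pi_H)u, w)$. Now I expand $a((I-\pi_H)u, w)$ using the definition \eqref{eq:sesquilinear_form}:
\begin{equation*}
a((I-\pi_H)u,w) = (\nabla(I-\pi_H)u,\nabla w) - \kappa^2((I-\pi_H)u,w) + \mathrm{i}(\sigma(I-\pi_H)u,w).
\end{equation*}
The first term is bounded by $\|\nabla(I-\pi_H)u\|_0\|\nabla w\|_0 \le C\|\nabla u\|_0\|\nabla w\|_0$ using the stability of the nodal interpolant in $H^1$ (a triangle inequality with \eqref{eq:interpolation2} handled for $u\in H^1$, or more carefully one uses $\|\nabla(I-\pi_H)u\|_0 \le \|\nabla u\|_0 + \|\nabla\pi_H u\|_0 \le C\|\nabla u\|_0$ by $H^1$-stability of $\pi_H$). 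The last two terms are bounded via Cauchy--Schwarz and \eqref{eq:interpolation2}: $|((I-\pi_H)u,w)| \le CH\|\nabla u\|_0\|w\|_0$ and similarly for the $\sigma$ term with an extra $\|\sigma\|_\infty$. Then I apply Lemma~\ref{th:cg_approximation} in its short form $\|w\|_0 \le CC_S H^{1/2+\delta}\|\nabla w\|_0$ to convert $\|w\|_0$ into $\|\nabla w\|_0$, collecting a factor like $(\kappa^2+\|\sigma\|_\infty)C_S H^{3/2+\delta}$.

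Combining, $\alpha\|\nabla w\|_0^2 \le C\|\nabla u\|_0\|\nabla w\|_0 + C(\kappa^2+\|\sigma\|_\infty)C_S H^{3/2+\delta}\|\nabla u\|_0\|\nabla w\|_0$, and since the assumptions \eqref{eq:H_ass} (in particular $\kappa^2 C_S H^{3/2+\delta}\ll 1$) make the second coefficient bounded, dividing through by $\|\nabla w\|_0$ gives $\|\nabla w\|_0 \le C\|\nabla u\|_0$ with $C$ independent of the stated parameters. I expect the main obstacle to be the careful bookkeeping in the cross term: one must verify that after applying Lemma~\ref{th:cg_approximation} the resulting power of $H$ times $C_S$ (and also the bare $\|\nabla u\|_0\|\nabla w\|_0$ term's coefficient, which is just a constant) stays uniformly bounded under exactly the hypotheses \eqref{eq:H_ass}, and to make sure no hidden $\kappa$-dependence sneaks in through the interpolation estimates — which it does not, since \eqref{eq:interpolation2} only needs $u\in H^1$ and the domain shape, not the Helmholtz stability constant.
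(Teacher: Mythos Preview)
Your argument is correct and follows essentially the same route as the paper: invoke Lemma~\ref{le:cg_elliptic} for the lower bound, use the orthogonality \eqref{eq:IHortho} to replace $I_H u$ by $\pi_H u$, expand the sesquilinear form, and estimate the gradient term by $H^1$-stability of $\pi_H$ and the zero-order terms via \eqref{eq:interpolation2}. The only difference is in the final step: the paper converts $\|(I-I_H)u\|_0$ to $\|\nabla(I-I_H)u\|_0$ by Poincar\'e--Friedrichs, leaving a coefficient $(\kappa^2+\|\sigma\|_\infty)H$, whereas you use Lemma~\ref{th:cg_approximation} and obtain $(\kappa^2+\|\sigma\|_\infty)C_S H^{3/2+\delta}$; your version has the advantage that this quantity is exactly one of the stated hypotheses \eqref{eq:H_ass}, so the bookkeeping you flagged as a potential obstacle is in fact cleaner in your variant.
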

\begin{proof} Under the assumptions (\ref{eq:H_ass}), Lemma \ref{le:cg_elliptic} states that there exists a constant $\alpha>0$, independent of $h$, $H$, $C_S$, $\kappa$, and $\sigma$, such that

\begin{equation*}
\alpha \| \nabla (I-I_H) u \|^2_0 \leq \Re a( (I-I_H)u,(I-I_H) u).
\end{equation*}

\noindent Now, let $\pi_H u \in V_H$  be the nodal interpolant of $u$. By the orthogonality property given in equation (\ref{eq:IHortho}), we have

\begin{equation*}
\alpha \| \nabla (I-I_H) u \|^2_0 \leq \Re a( (I-\pi_H)u,(I-I_H) u).
\end{equation*}

\noindent That is

\begin{multline*}
\alpha \| \nabla (I-I_H) u \|^2_0 \leq \Re \left( \nabla(I-\pi_H)u,\nabla (I-I_H) u) \right. \\ \left. - \kappa^2 ( (I-\pi_H)u, (I-I_H) u) + \mathrm{i} (\sigma (I-\pi_H)u, (I-I_H) u) \right) .
\end{multline*}

\noindent Using the Cauchy-Schwartz inequality, the interpolation error estimate (\ref{eq:interpolation2}), and the boundedness of the interpolation operator, we get

\begin{multline*}
\alpha \| \nabla (I-I_H) u \|^2_0 \leq \| \nabla u \|_0 \| \nabla (I-I_H)u \|_0 + C (\|\sigma\|_\infty + \kappa^2) H \| \nabla u\|_0 \|(I-I_H) u \|_0.
\end{multline*}

\noindent Applying the Poincar\'e-Friedrichs inequality and dividing by $\|\nabla (I-I_H)u\|_0$ completes the proof.
\end{proof}

A direct consequence of Lemmas \ref{th:cg_approximation} and \ref{le:cg_bounded} is an approximation property for the operator $Q$. The approximation property is obtained both in the $H^1(\Omega)$- and $L^2(\Omega)$-norms. The convergence result in the $L^2(\Omega)$-norm follows from the $H^1(\Omega)$ approximation property and the duality argument.

\begin{lemma}
\label{lemma:cg_Q1}
Let the coarse grid mesh size $H$ be such that the assumptions given in equation (\ref{eq:H_ass}) are satisfied. In addition let $u \in V_h$. Then there exist a constant $C > 0$, independent of $h$,$H$,$C_S$, $\kappa$, and $\sigma$, such that

\begin{equation}
\| \nabla Q u \|_0 \leq C C_S H^{1/2+\delta} \| u \|_0 \quad \forall u \in V_h.
\end{equation}

\noindent for some $\delta \in (0,1/2]$.
\end{lemma}

\begin{proof}
By the definition of operator $Q$, equation (\ref{eq:Qoper}), we have 

\begin{equation*}
\| \nabla Q u \|^2_0  = (u, (I-I_H) Qu).
\end{equation*}

\noindent The Cauchy-Schwartz inequality and Lemma \ref{th:cg_approximation} yield

\begin{equation*}
| (u, (I-I_H) Qu) | \leq C C_S H^{1/2+\delta} \| u \|_0  \| \nabla (I-I_H) Qu \|_0, 
\end{equation*}

\noindent Applying the boundedness result from Lemma \ref{le:cg_bounded} completes the proof. 
\end{proof}

\begin{lemma} 
\label{le:cg_Q0}
Let the coarse grid mesh size $H$ be such that assumptions given in equation (\ref{eq:H_ass}) are satisfied. In addition let $u \in V_h$. Then there exist a constant $C > 0$, independent of $h$,$H$,$C_S$, $\kappa$, and $\sigma$, such that

\begin{equation}
\| Q u \|_0 \leq C C_S H^{1+2\delta} \| u \|_0 \quad \forall u \in V_h.
\end{equation}

\noindent for some $\delta \in (0,1/2]$. 

\end{lemma}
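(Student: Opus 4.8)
\emph{Proof plan.} The estimate will be deduced from the $H^1$-bound of Lemma~\ref{lemma:cg_Q1} by an Aubin--Nitsche duality argument, the one structural ingredient beyond the routine machinery being an orthogonality property of $Qu$ with respect to the coarse space. First I would record this property: testing the defining relation (\ref{eq:Qoper}) with $v = v_H \in V_H$ and using that $I_H$ acts as the identity on $V_H$ (which is immediate from its definition (\ref{eq:IHoper}), the coarse sesquilinear form being nonsingular — this is exactly what is needed for $A_H^{-1}$ in the preconditioner (\ref{eq:cg_prec}) to make sense), one has $(I-I_H)v_H = 0$, whence
\begin{equation*}
(\nabla Q u , \nabla v_H ) = 0 \qquad \forall \, v_H \in V_H .
\end{equation*}

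Next I would run the duality. Since $Qu \in V_h \subset L^2(\Omega)$, we may write $\|Qu\|_0 = \sup_{0 \neq g \in L^2(\Omega)} (Qu,g)/\|g\|_0$, and for a fixed $g$ introduce the continuous dual Poisson problem: find $\phi \in H^1_0(\Omega)$ with $(\nabla \phi, \nabla v) = (g,v)$ for all $v \in H^1_0(\Omega)$. Because $\kappa$ and $\sigma$ do not enter this problem, the regularity theory underlying Theorem~\ref{th:regularity} gives $\phi \in H^{3/2+\delta}(\Omega)$ with $\|\phi\|_{3/2+\delta} \le C \|g\|_0$ and $C$ independent of all parameters. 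Testing this weak form with $v = Qu$ yields $(\nabla Qu, \nabla \phi) = (Qu, g)$; subtracting the nodal interpolant $\pi_H \phi \in V_H$ — legitimate by the orthogonality above — and using the Cauchy--Schwarz inequality together with the interpolation estimate (\ref{eq:interpolation1}) gives
\begin{equation*}
| (Qu, g) | = | (\nabla Q u , \nabla(\phi - \pi_H \phi)) | \le \| \nabla Q u \|_0 \, \| \nabla(\phi - \pi_H \phi) \|_0 \le C H^{1/2+\delta} \| \nabla Q u \|_0 \, \| g \|_0 .
\end{equation*}
Taking the supremum over $g$ gives $\|Qu\|_0 \le C H^{1/2+\delta} \| \nabla Q u \|_0$, and substituting the bound $\| \nabla Q u \|_0 \le C C_S H^{1/2+\delta}\|u\|_0$ of Lemma~\ref{lemma:cg_Q1} (available since the assumptions (\ref{eq:H_ass}) are in force) produces exactly the claimed factor $C C_S H^{1+2\delta}$.

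Everything here except the orthogonality step is bookkeeping — Cauchy--Schwarz, the interpolation estimate already stated, and checking that the dual-problem constant is parameter-free. The point I expect to require care is precisely that $(\nabla Qu, \nabla v_H) = 0$, i.e. that $I_H$ restricts to the identity on $V_H$; I would also keep the duality at the continuous level so that $\phi$ has the genuine $H^{3/2+\delta}$ regularity required by (\ref{eq:interpolation1}), since working with a discrete dual solution instead would introduce an extra term controlled only through the finite element error on the fine mesh — harmless because $h \le H$, but obscuring the argument.
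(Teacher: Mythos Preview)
Your proposal is correct and follows essentially the same Aubin--Nitsche argument as the paper: the paper also sets up a continuous Poisson dual problem (with the specific right-hand side $Qu$ rather than a generic $g$), invokes the same orthogonality $(\nabla Qu,\nabla v_H)=0$ coming from $(I-I_H)v_H=0$, subtracts the nodal interpolant $\pi_H\varphi$, and combines the interpolation estimate (\ref{eq:interpolation1}) with Lemma~\ref{lemma:cg_Q1}. The only cosmetic difference is that the paper obtains $\|Qu\|_0^2$ directly and divides by $\|Qu\|_0$, whereas you take a supremum over $g$; the key orthogonality step you flagged as the delicate point is exactly the one the paper uses.
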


\begin{proof}

\noindent Let $\varphi \in H^1_0(\Omega)$ be such that 

\begin{equation*}
(\nabla \varphi,\nabla v) = (Qu,v) \quad \forall v \in H^1_0(\Omega).
\end{equation*}

\noindent As $Qu \in V_h$, also $Qu \in L^2(\Omega)$. Hence, by elliptic regularity theory, $\varphi \in H^{3/2+\delta}(\Omega)$ and $\| \varphi \|_{3/2+\delta} \leq C \| Qu\|_0$. By the definition  of $Q$, we have the orthogonality property

\begin{equation*}
(\nabla \pi_H \varphi,\nabla Qu) = 0.
\end{equation*}

\noindent By this property and definition of opeartor $Q$, equation (\ref{eq:Qoper}), we have

\begin{equation*}
(\nabla (\varphi - \pi_H \varphi),\nabla Qu) = \| Q u \|_0^2.
\end{equation*}

\noindent Next, the Cauchy-Schwartz inequality leads to 

\begin{equation*}
\| Q u \|_0^2 \leq \| \nabla (\varphi - \pi_H \varphi )\|_0 \| \nabla Qu \|_0 
\end{equation*}

\noindent Applying the interpolation error estimate given in equation (\ref{eq:interpolation1}), the stability estimate for $\varphi$, and Lemma \ref{lemma:cg_Q1} gives

\begin{equation*}
\| Q u \|_0^2 \leq \ C C_S H^{1+2 \delta} \| Qu \|_0 \| u\|_0.
\end{equation*}

\noindent Dividing with $\| Qu \|_0$ completes the proof.

\end{proof}

Now, we are finally at the position to give bounds for the FOV. The preconditioner (\ref{eq:cg_prec}) has the operator form $P_H + Q$, so we need to bound

\begin{equation*}
a( (P_H + Q)u,u).
\end{equation*}

\noindent From the definition of the interpolation operator $I_H$ given in equation (\ref{eq:IHoper}) and the definition of operator $P_H$ given in equation (\ref{eq:PHoper}), it immediately follows that

\begin{equation*}
a(P_H u, u) = a( P_H u, I_H u) = (u,I_H u).
\end{equation*}

\noindent The definition of the operator $Q$ given in equation (\ref{eq:Qoper}), yields

\begin{equation*}
a(Q u,u) = ( u, (I - I_H) u)_0 - \kappa^2( Qu , u)_0 + \mathrm{i}(\sigma Qu,u).
\end{equation*}

\noindent Combining the two above identities gives

\begin{equation}
\label{eq:2L_prec_system}
a( (P_H + Q)u,u)  = \| u \|_0^2 - \kappa^2 (Qu,u)_0 + \mathrm{i}( \sigma Qu , u).
\end{equation}

\noindent Estimating the last two terms above using Lemma \ref{le:cg_Q0} leads to the desired bounds for the FOV.

\begin{theorem} Let the coarse grid mesh size $H$ satisfy the assumptions
\label{th:2L_bounds}
\begin{equation}
\label{eq:2L_ass}
k^2 C^2_S H^{1+2\delta} \ll 1 \quad \mbox{and} \quad k^2 C_S H^{3/2+\delta} \ll 1.
\end{equation}

\noindent Then there exists constants $c,C > 0$, independent of $h$, $H$, $\kappa$, and $\sigma$, such that

\begin{equation*}
\Re \mathcal{F} \subset  \left[ ch^d \left( 1 - C_S (\kappa^2 + \| \sigma \|_\infty) H^{1+2\delta} \right) ,Ch^d \left(1 + C_S (\kappa^2 + \| \sigma \|_\infty) H^{1+2\delta} \right) \right]
\end{equation*}

\noindent and 

\begin{equation*}
\Im \mathcal{F} \subset  \left[ ch^d C_S (\kappa^2 + \| \sigma \|_\infty) H^{1+2\delta}, Ch^d C_S (\kappa^2 + \| \sigma \|_\infty) H^{1+2\delta} \right]
\end{equation*}

\noindent where $d$ is the spatial dimension and $\delta \in (0,1/2]$.

\end{theorem}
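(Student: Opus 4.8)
The plan is to read the two intervals off directly from the operator identity (\ref{eq:2L_prec_system}), treating the two $Q$-terms as perturbations of the ``good'' term $\|u\|_0^2$ and controlling them with the $L^2$-decay estimate for $Q$ proved in Lemma~\ref{le:cg_Q0}. First I would note that the hypotheses (\ref{eq:2L_ass}) of the theorem coincide with the hypotheses (\ref{eq:H_ass}) used throughout Lemmas~\ref{th:cg_approximation}--\ref{le:cg_Q0}, so all of those results are available; in particular, under (\ref{eq:2L_ass}) the quantity $C_S(\kappa^2+\|\sigma\|_\infty)H^{1+2\delta}$ is small, which will make the lower endpoints (and the sign of the real part) meaningful.

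Concretely: fix $z\in\mathcal{F}$ and pick $u\in V_h$, $\vec{x}_u\neq 0$, with $z = a((P_H+Q)u,u)/(\vec{x}_u^*\vec{x}_u)$. By (\ref{eq:2L_prec_system}) we have $a((P_H+Q)u,u) = \|u\|_0^2 - \kappa^2 (Qu,u)_0 + \mathrm{i}(\sigma Qu,u)$, so that
\begin{align*}
\Re\, a((P_H+Q)u,u) &= \|u\|_0^2 - \kappa^2 \Re (Qu,u)_0 - \Im (\sigma Qu,u), \\
\Im\, a((P_H+Q)u,u) &= -\kappa^2 \Im (Qu,u)_0 + \Re (\sigma Qu,u).
\end{align*}
Cauchy--Schwarz gives $|(Qu,u)_0| \le \|Qu\|_0\|u\|_0$ and $|(\sigma Qu,u)| \le \|\sigma\|_\infty\|Qu\|_0\|u\|_0$, while Lemma~\ref{le:cg_Q0} bounds $\|Qu\|_0 \le C C_S H^{1+2\delta}\|u\|_0$. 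Hence each perturbation term is bounded in modulus by $C C_S(\kappa^2+\|\sigma\|_\infty)H^{1+2\delta}\|u\|_0^2$, so
\begin{equation*}
\|u\|_0^2\bigl(1 - C C_S(\kappa^2+\|\sigma\|_\infty)H^{1+2\delta}\bigr) \le \Re\, a((P_H+Q)u,u) \le \|u\|_0^2\bigl(1 + C C_S(\kappa^2+\|\sigma\|_\infty)H^{1+2\delta}\bigr)
\end{equation*}
and $|\Im\, a((P_H+Q)u,u)| \le C C_S(\kappa^2+\|\sigma\|_\infty)H^{1+2\delta}\|u\|_0^2$. Dividing by $\vec{x}_u^*\vec{x}_u=|\vec{x}_u|^2$ and using the norm equivalence $c h^d|\vec{x}_u|^2 \le \|u\|_0^2 \le C h^d|\vec{x}_u|^2$ from Lemma~\ref{le:norm_eq} (its lower bound for the left endpoints, its upper bound for the right endpoints) yields the asserted intervals for $\Re\mathcal{F}$ and $\Im\mathcal{F}$, after absorbing the explicit constants into $c$, $C$, and $C_S$.

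The genuine analytic work has in fact already been done before this statement: the chain Lemma~\ref{th:cg_approximation} $\to$ Lemma~\ref{le:cg_elliptic} $\to$ Lemma~\ref{le:cg_bounded} $\to$ Lemma~\ref{lemma:cg_Q1} $\to$ Lemma~\ref{le:cg_Q0} — the duality estimate for $I-I_H$, the ensuing $\kappa$-independent coercivity of $a((I-I_H)\cdot,(I-I_H)\cdot)$, the $H^1$-stability of $I-I_H$, and finally the $O(C_S H^{1+2\delta})$ decay of $Q$ in $L^2$ — is where the coarse-grid condition (\ref{eq:2L_ass}) is really exploited. Given Lemma~\ref{le:cg_Q0}, the present theorem is essentially bookkeeping with Cauchy--Schwarz and Lemma~\ref{le:norm_eq}; the only points requiring mild attention are that one must keep every estimate two-sided (so that the left endpoints, and in particular the controlled size of $\Im\mathcal{F}$, follow and not merely an upper bound on the modulus of the FOV), and that (\ref{eq:2L_ass}) be verified to match (\ref{eq:H_ass}) so Lemma~\ref{le:cg_Q0} indeed applies.
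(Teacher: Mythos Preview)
Your proposal is correct and follows essentially the same route as the paper: start from the identity (\ref{eq:2L_prec_system}), split into real and imaginary parts, bound $|(Qu,u)|$ and $|(\sigma Qu,u)|$ via Cauchy--Schwarz together with the $L^2$-estimate $\|Qu\|_0 \le C C_S H^{1+2\delta}\|u\|_0$ from Lemma~\ref{le:cg_Q0}, and finish with the norm equivalence of Lemma~\ref{le:norm_eq}. Your remark that (\ref{eq:2L_ass}) matches (\ref{eq:H_ass}) so that Lemma~\ref{le:cg_Q0} is available, and your observation that the analytic content really lies in the preceding chain of lemmas, are both on point.
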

\begin{proof} By equation (\ref{eq:2L_prec_system}), we have

\begin{equation}
\label{eq:2Lfov1}
\Re a( (P_H+Q)u, u) = \| u \|_0^2 + \Re \left( \mathrm{i}  (\sigma Qu,u) -\kappa^2  (Qu,u)\right) 
\end{equation}

\noindent and

\begin{equation}
\label{eq:2Lfov2}
\Im a( (P_H+Q)u, u) = \Im \left(\mathrm{i} (\sigma Qu,u) - \kappa^2 (Qu,u) \right).
\end{equation}

\noindent Under the assumptions made in equation (\ref{eq:2L_ass}), we can use the convergence estimates for term $\|Qu\|_0$ given in Lemma \ref{le:cg_Q0} to bound the above terms. This leads to the estimates

\begin{equation*}
|(Qu,u)| \leq  C C_S H^{1+2\delta} \| u \|_0^2 
\end{equation*}

\noindent and

\begin{equation*}
|(\sigma Qu,u)| \leq \| \sigma \|_\infty C C_S H^{1+2\delta} \| u \|_0^2.
\end{equation*}

\noindent Combining these inequalities and Lemma \ref{le:norm_eq} with equations (\ref{eq:2Lfov1}), (\ref{eq:2Lfov2}), completes the proof.

\end{proof}

The bounds given in the above theorem reflect the requirement on the approximation properties of the space $V_H$. The coarse grid has to be sufficiently dense, before the FOV set is completely located at the right-half plane. Based on the above result,  this happens in a convex domain when $\kappa^2 H \ll 1$. However, to obtain the estimates applied to derive the result the coarse grid mesh size requirement $\kappa^3 H \ll 1$ was made. Hence, the coarse grid mesh should satisfy the very strict requirement $\kappa^3 H \ll 1$.

\section{Numerical Examples}
\label{sec:numerical} 

In this section, we study the presented theory by numerical examples. We verify the bounds derived for the FOV by computing the actual sets with the procedure presented in Section 3. We will also solve the preconditioned linear system to show  the relationship between the FOV and the number of GMRES iterations required to solve the problem.

\begin{figure}[ht]
\begin{center}
\includegraphics[scale=0.5]{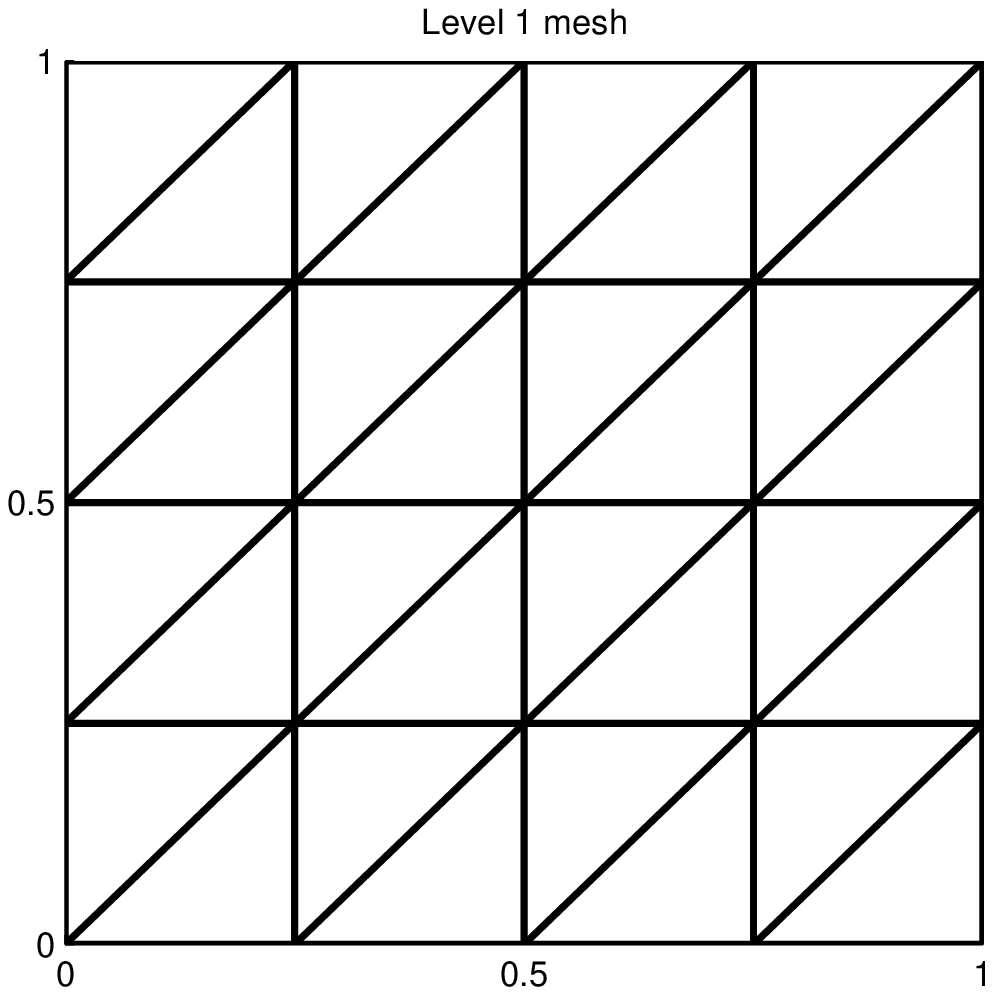} \hfill
\includegraphics[scale=0.5]{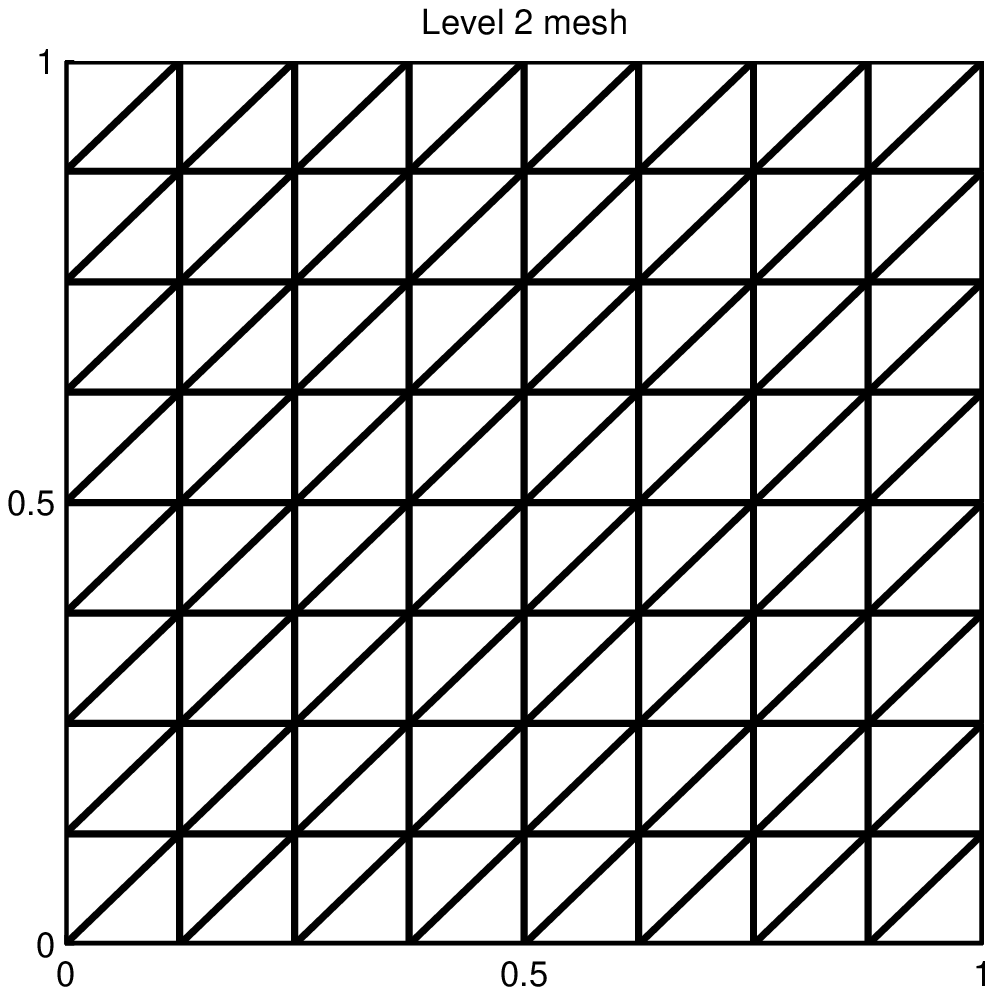}
\end{center}
\caption{The first two mesh levels for the two dimensional test case.}
\label{fig:test1_mesh_levels}
\end{figure}

The numerical tests are composed of two parts. First, we will consider the two dimensional unit square, $\Omega=(0,1)^2$. In this test, we use a family of triangulations composed by uniformly refining an initial mesh. The first two mesh levels are presented in Fig. \ref{fig:test1_mesh_levels}. 

The second part is a computationally more realistic three dimensional test case. As the number of degrees of freedom required in a three dimensional domain is considerably larger compared to the two dimensional case, we have not computed the FOV. The  main focus is in the convergence of the  preconditionerd GMRES method.

\subsection{Laplace preconditioner}

We begin by studying the exact Laplace preconditioner presented in Section 4. Our aim is to verify the bounds given for the FOV in Theorems \ref{th:L_rectangle} and \ref{th:L_strip}. These theorems state that the FOV for the Laplace preconditioned system in two dimensions is contained inside a rectangle $(-c\kappa^2 h^2,Ch^2) \times (0,C\sigma h^2)$ and the strip 

\begin{equation}
\label{eq:num:strip}
 \left\{ z \in \mathbb{C} \; \bigg | \;  ch^2 - \frac{\kappa^2}{\sigma} \Im z \leq \Re z \leq Ch^2 - \frac{\kappa^2}{\sigma} \Im z   \; \right \},
\end{equation}

\noindent in which constants $c,C>0$ are independent of $\kappa,\sigma$ and $h$. 

\begin{figure}[ht]
\begin{center}
\includegraphics[scale=0.5]{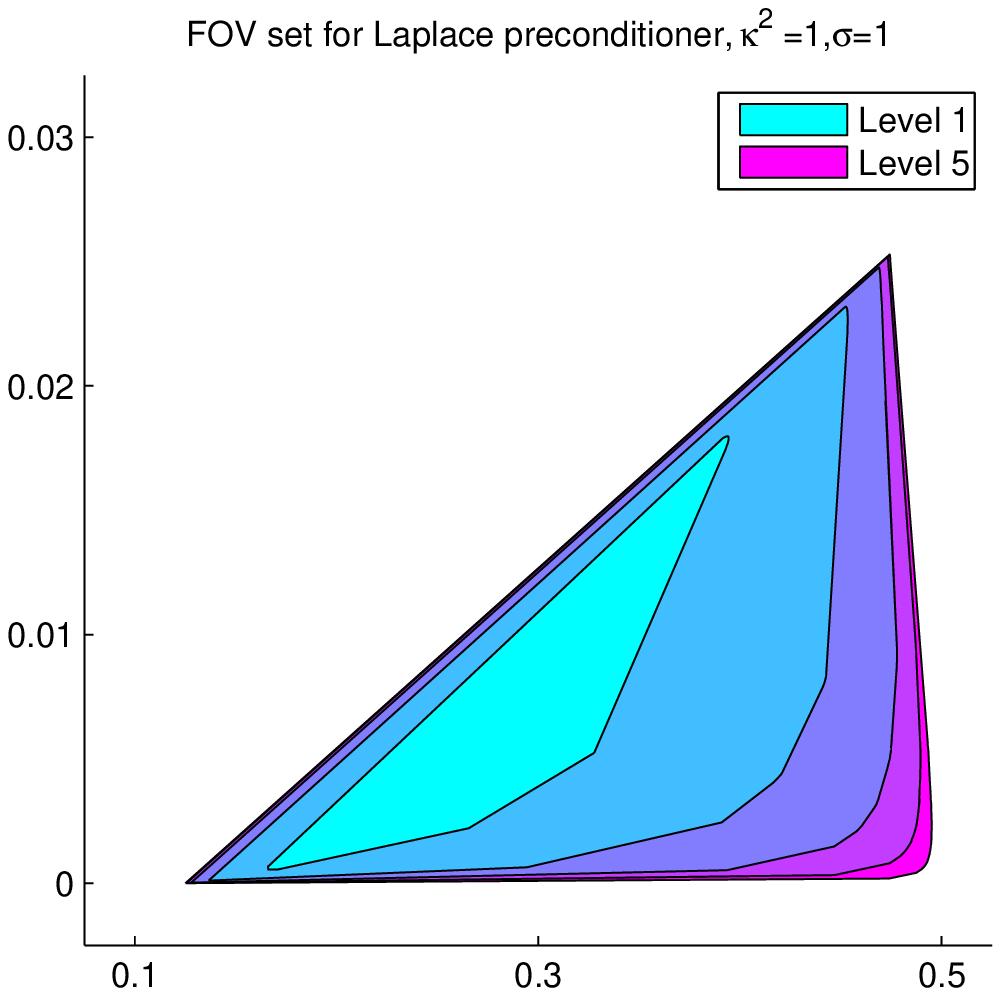} \hfill
\includegraphics[scale=0.5]{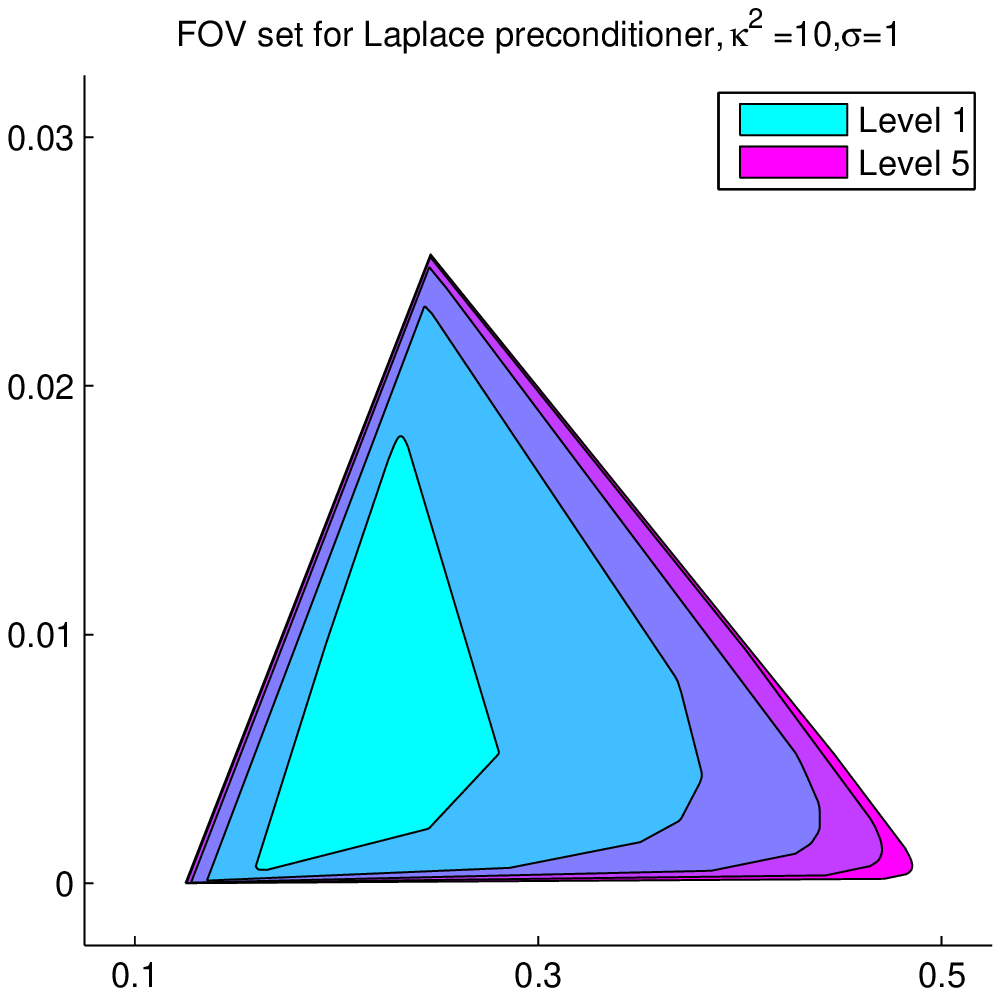} \\ 
\includegraphics[scale=0.5]{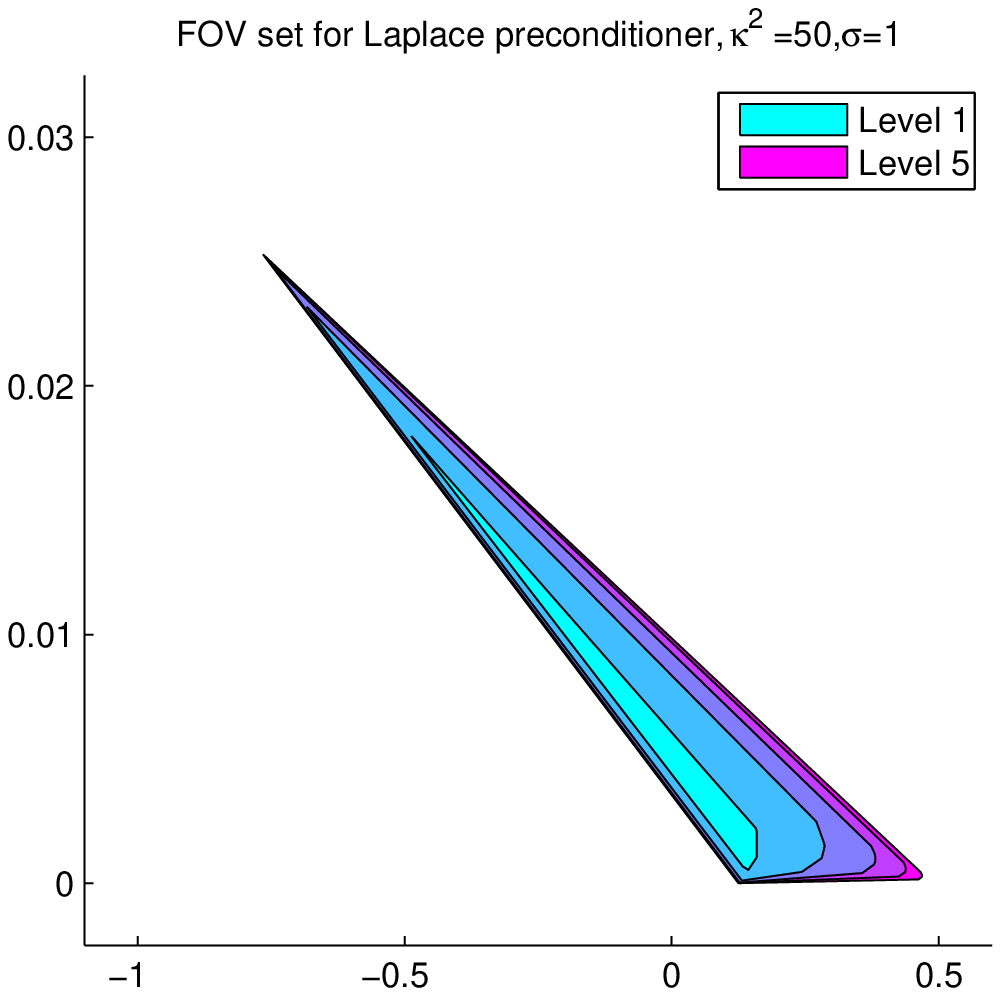} 
\end{center}
\caption{The $h^2$-scaled FOV for the exact Laplace preconditioner in the two dimensional test case. The sets are computed on mesh levels from one to five. The parameters are $\sigma =1$ and $\kappa^2 = 1, 10, 50$. }
\label{fig:test1_FOVsets}
\end{figure}

The $h^2$-scaled FOV for the Laplace preconditioned system are presented in Fig. \ref{fig:test1_FOVsets} for $\sigma = 1$ and three different parameter values $\kappa^2 = 1,10, 50$. Each figure contains the FOV from mesh levels one to five. Based on these results, the $h^{2}$-scaled shape of the FOV seems to be dependent only on the parameters $\kappa$ and $\sigma$.

To verify the theoretical bounds for the rectangle bounding the FOV set, we have computed the $h^{2}$ - scaled dimensions of this rectangle for mesh levels from one to four and for different parameter values. The bounds are presented in Figs. \ref{fig:test1_rectangle_sigma} and \ref{fig:test1_rectangle_kappa}. One can immediately observe from these results that the dimensions of the rectangle converge to a limit value when the mesh level is increased. The conclusion is that the $h^2$-scaled size of the rectangle stays constant, as predicted by the analysis. Based on Figs. \ref{fig:test1_rectangle_sigma} and \ref{fig:test1_rectangle_kappa}, the lower bound of the real part depends linearly on $\kappa^2$ and the upper bound of the imaginary part linearly on $\sigma$. The upper bound for the real part as well as the bounds for the imaginary part are $\kappa^2$ -independent. The bounds for the real part are $\sigma$-independent and the lower bound for the imaginary part is very close to zero. These results are in good accordance with Theorem \ref{th:L_rectangle}.

\begin{figure}[ht]
\begin{center}\hfill
\includegraphics[scale=0.5]{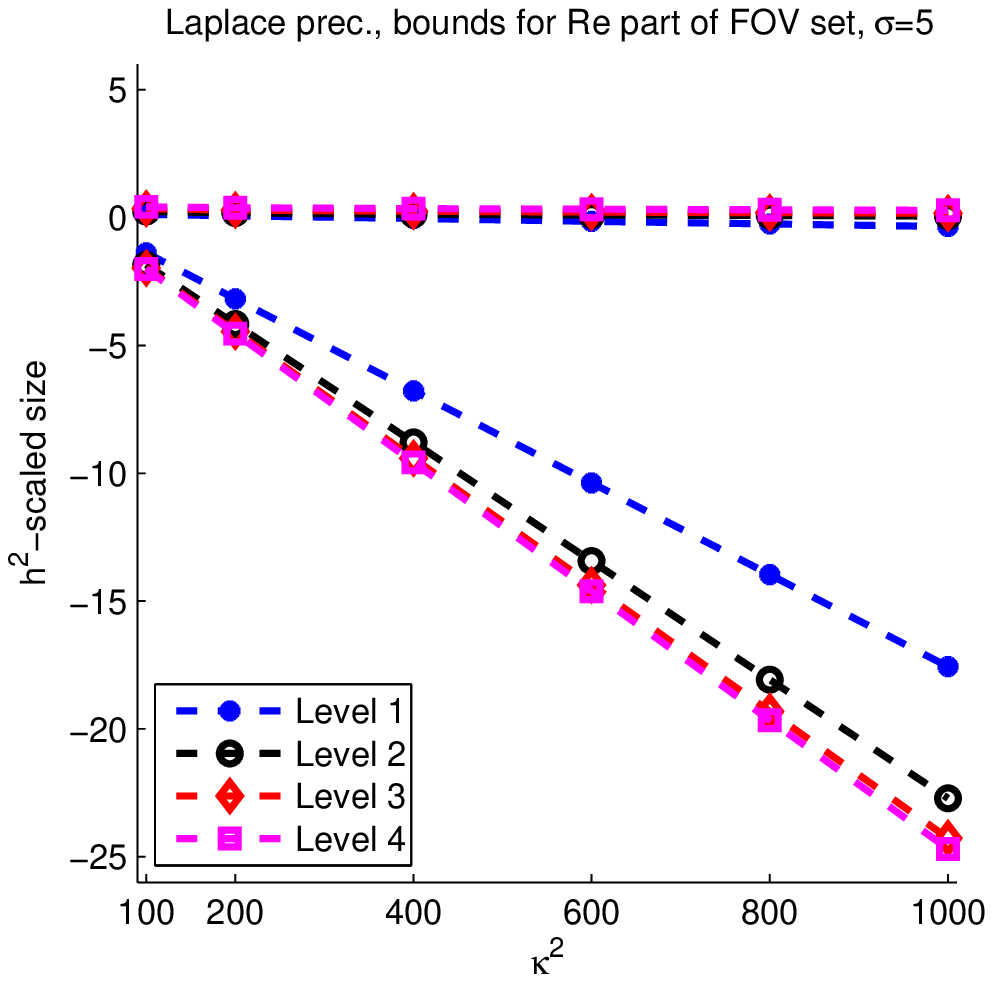} \hfill \includegraphics[scale=0.5]{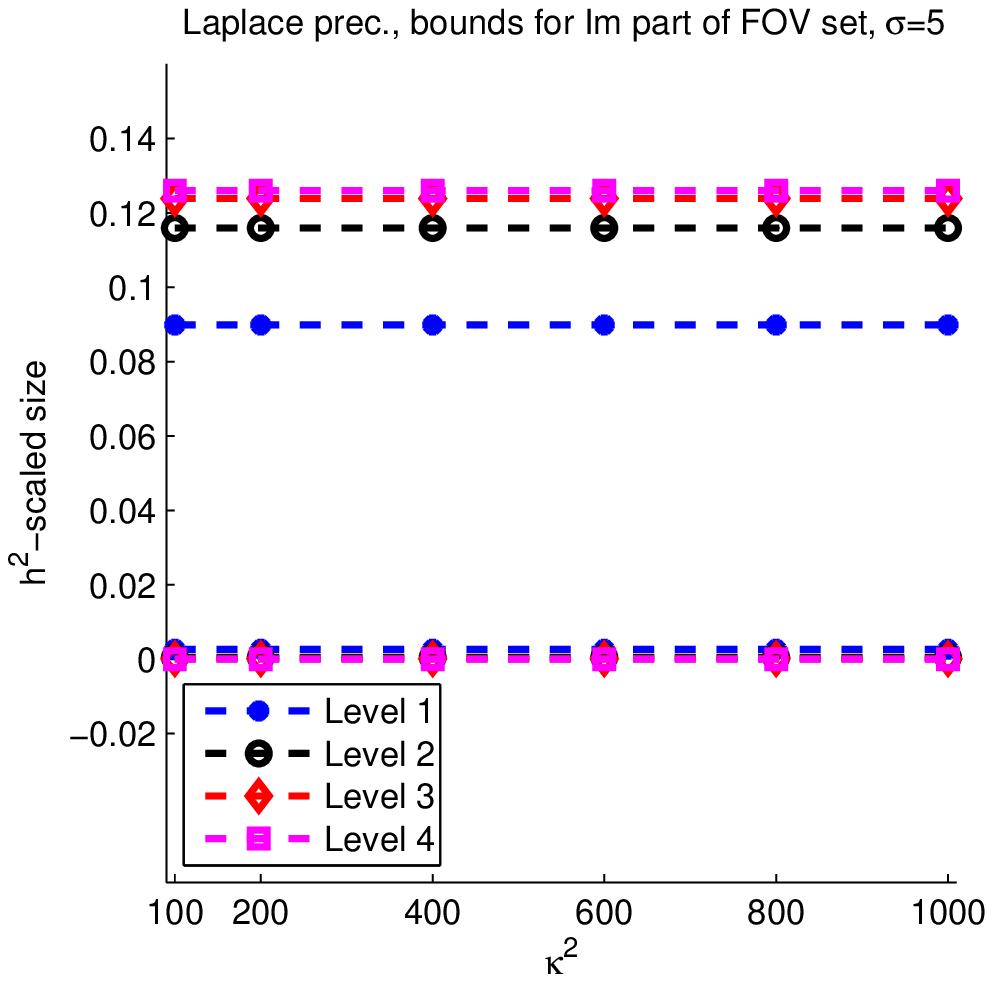} \hfill
\end{center}
\caption{The $h^2$-scaled bounds for the real and imaginary parts of the rectangle containing the FOV of the Laplace preconditioned system. The parameter $\sigma = 5$.}
\label{fig:test1_rectangle_sigma}
\end{figure}

\begin{figure}[ht]
\begin{center}\hfill
\includegraphics[scale=0.5]{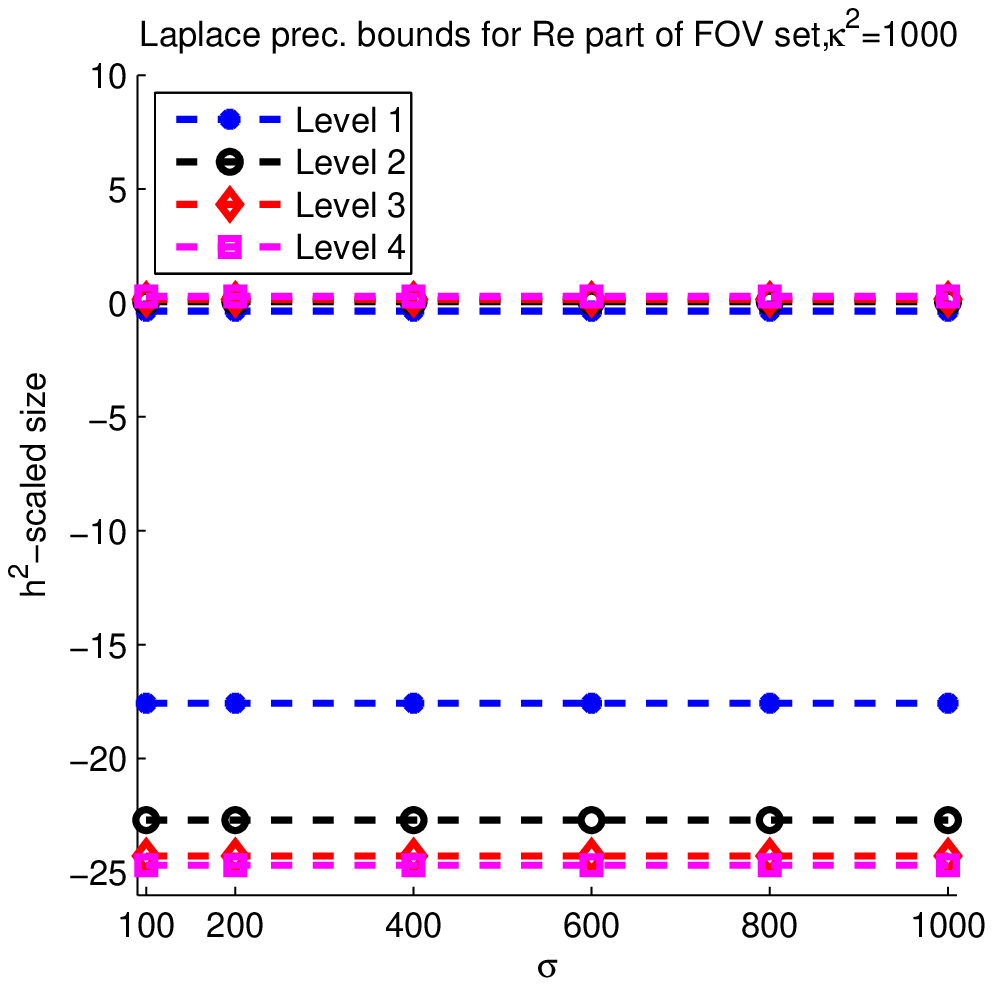} \hfill \includegraphics[scale=0.5]{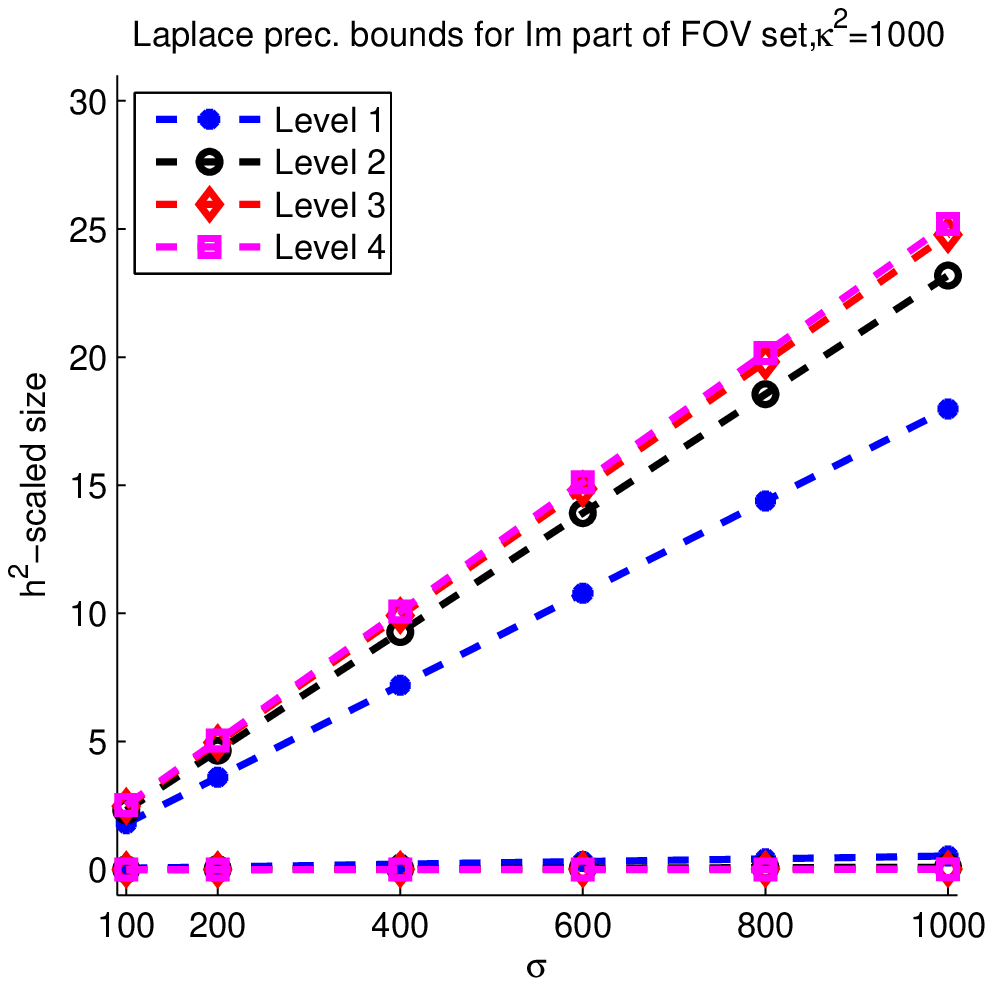}\hfill
\end{center}
\caption{The $h^2$-scaled bounds for the real and imaginary parts of the rectangle containing the FOV of the Laplace preconditioned system. The parameter $\kappa^2=1000$.}
\label{fig:test1_rectangle_kappa}
\end{figure}

Next, we consider the strip containing the FOV for different parameter values and mesh levels from one to four. The strip is computed by finding the two lines with the slope $\frac{ \kappa^2 }{\sigma}$ bounding the FOV from above and below. The bounds for the $x$-intercept of the strip (\ref{eq:num:strip}) are visualized in Fig. \ref{fig:test1_xintercept}. Based on these bounds, the $h^{2}$-scaled $x$-intercept points of the two lines bounding the FOV are independent of $\kappa$, $\sigma$ and $h$. This is as predicted by Theorem \ref{th:L_strip}.

\begin{figure}[ht]
\begin{center}
\includegraphics[scale=0.5]{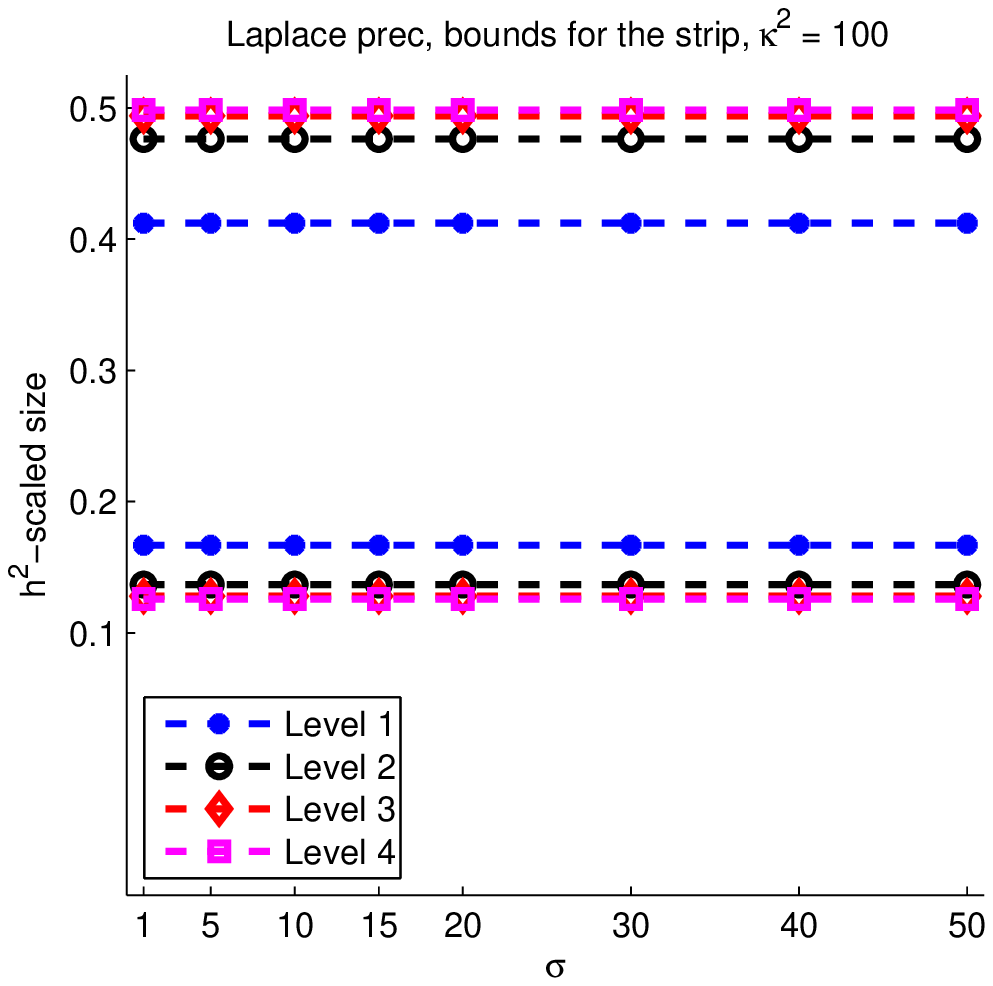}  \hfill
\includegraphics[scale=0.5]{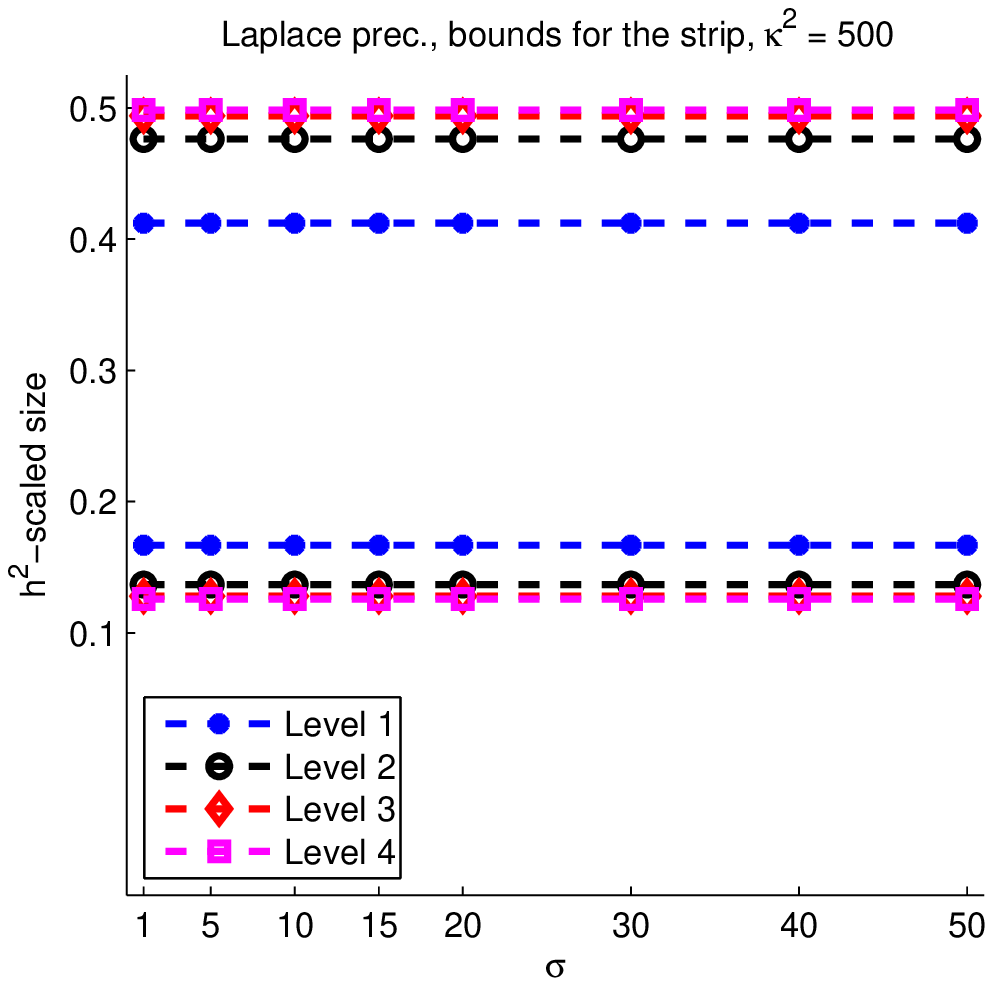}  \\
\includegraphics[scale=0.5]{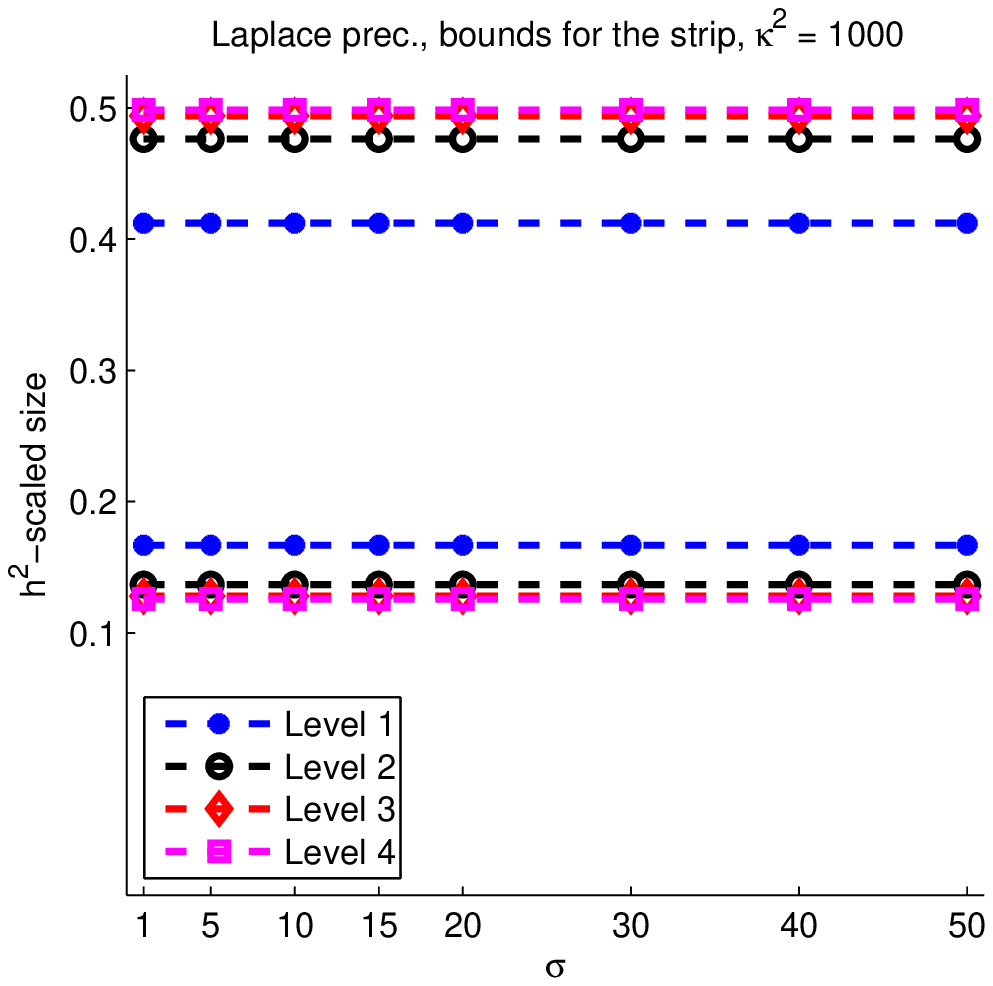}
\end{center}
\caption{The $h^2$-scaled bounds for $x$-intercepts of the strip containing the FOV for exact Laplace preconditioner in the first test case. The parameter $\kappa$ has the values $\kappa^2 = 100, 500, 1000$. }
\label{fig:test1_xintercept}
\end{figure}

\begin{figure}[ht]
\begin{center}
\includegraphics[scale=0.5]{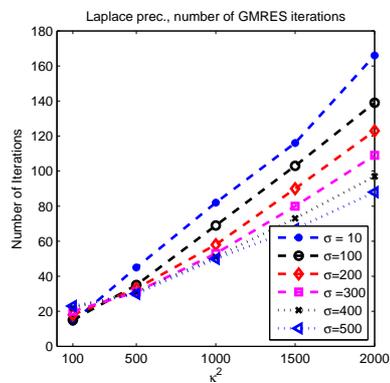}
\end{center}
\caption{The number of GMRES iterations required to solve the Laplace preconditioned system for $f=1$. A seventh level mesh and stopping criterion $10^{-6}$ were used in the test.}
\label{fig:test1_GMRES_iterations_laplace}
\end{figure}

To demonstrate the dependency of the required number of GMRES iterations for the Laplace preconditioned system on $\sigma$ and $\kappa$, we have solved the problem with different parameter values and the load function $f=1$. The level $7$ mesh was used in the computations. The stopping criterion for the GMRES iteration was set to $10^{-6}$. The required number of GMRES iterations is visualized in Fig. \ref{fig:test1_GMRES_iterations_laplace}. Based on these results, a linear dependency between the required number of iterations and $\kappa^2$ is observed. The slope of the $\kappa^2$ to number of iterations line is dependent on $\sigma$. This is due to the distance of the FOV from the origin being dependent of the ratio of $\kappa^2$ and $\sigma$.

\clearpage 
\subsection{Inexact Laplace preconditioner}

Next, we will consider replacing the exact Laplace preconditioner with a multigrid solver. The multigrid solver uses $V$-cycle iterations with one pre- and postsmoothing step with a Richardson smoother. A mesh hierarchy of mesh levels from two to seven is used in the tests, if not otherwise stated. Our aim is to demonstrate the bounds for the FOV of the MG preconditioned system presented in Section 5. The main interest lies in the behavior of the perturbation set

\begin{equation*}
\mathcal{F}( A ( \tilde{K}^{-N}-K^{-1} ) M)
\end{equation*}

\noindent when $\kappa$, $\sigma$, the number of multigrid iterations, or the number of levels are varied. In Section 5, we have shown that the perturbation set is located inside a $h^2$-scaled rectangle 

\begin{equation*}
(-c \gamma_0^N - c\kappa^2 \gamma_1^N , C\gamma_0^N + C\kappa^2 \gamma_1^N) \times (-c\gamma_0^N - c\sigma^2 \gamma_1^N , C\gamma_0^N + C\sigma^2 \gamma_1^N)
\end{equation*}

\noindent where $c,C > 0 $ are constants independent of the computational mesh, $\kappa$, $\sigma$, and the applied iterative scheme. The parameters  $\gamma_0$ and $\gamma_1$ are the error reduction factors for the multigrid solver and $N$ is the number of $V$-cycles.

\begin{figure}[ht]
\begin{center}
\includegraphics[scale=0.5]{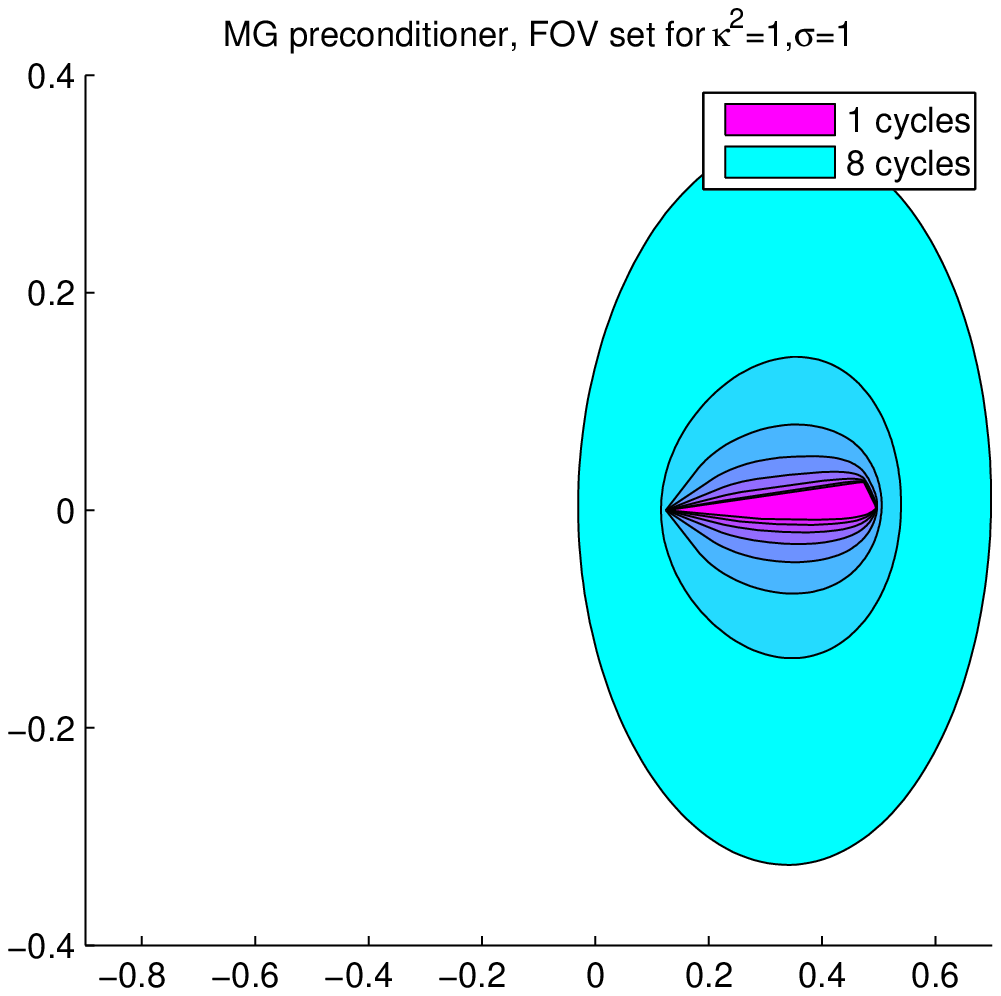} \hfill
\includegraphics[scale=0.5]{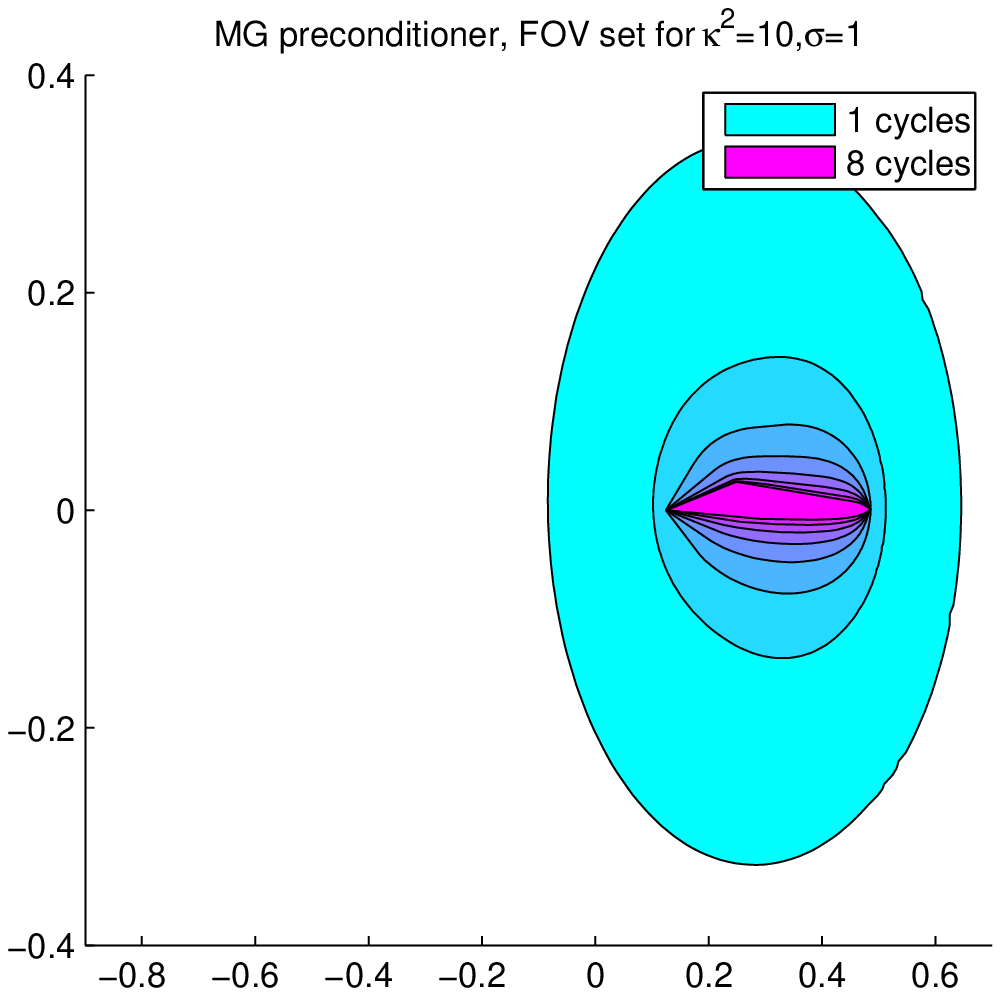} \\ 
\includegraphics[scale=0.5]{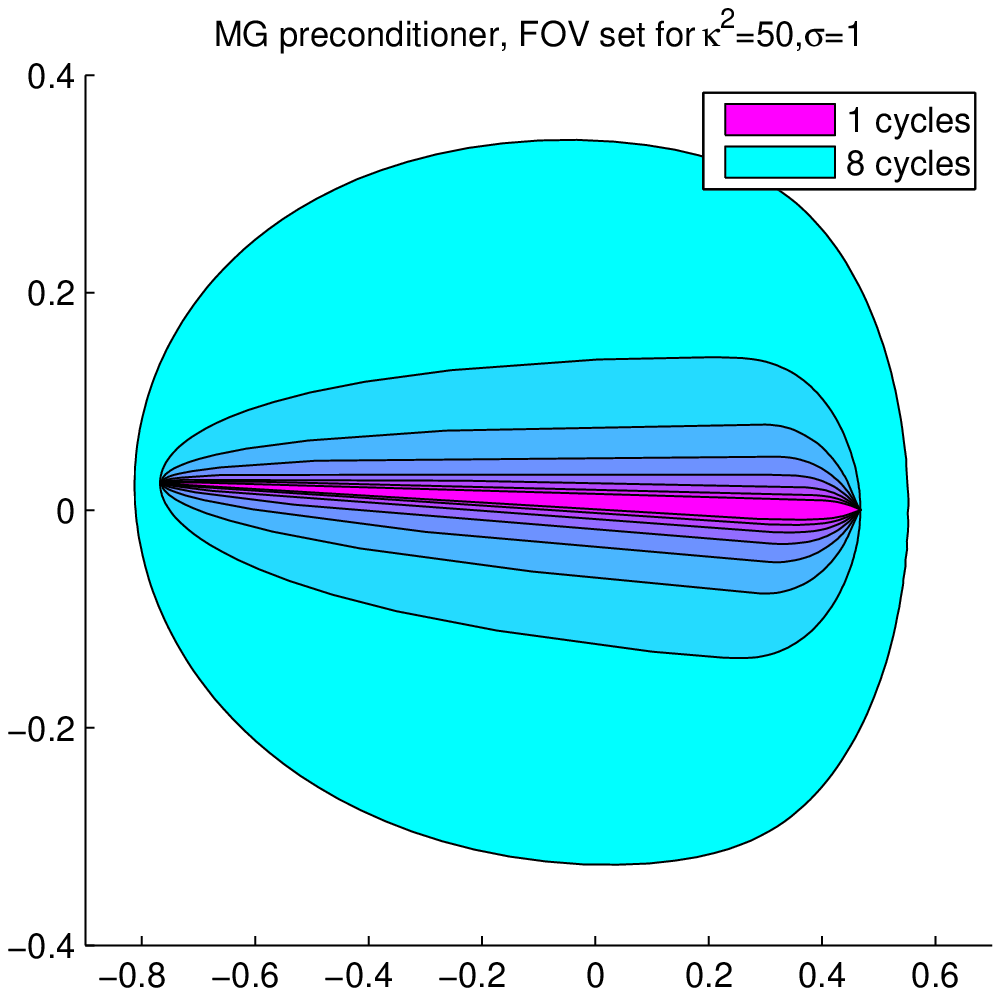} 
\end{center}
\caption{The $h^2$-scaled FOV for the multigrid preconditioned system. Up to $8$ multigrid $V$-cycles have been used. The parameter $\sigma = 1$ and $\kappa^2 = 1, 10, 50$.}
\label{fig:test1_mg_fovset}
\end{figure}

\begin{figure}[ht]
\begin{center}
\includegraphics[scale=0.5]{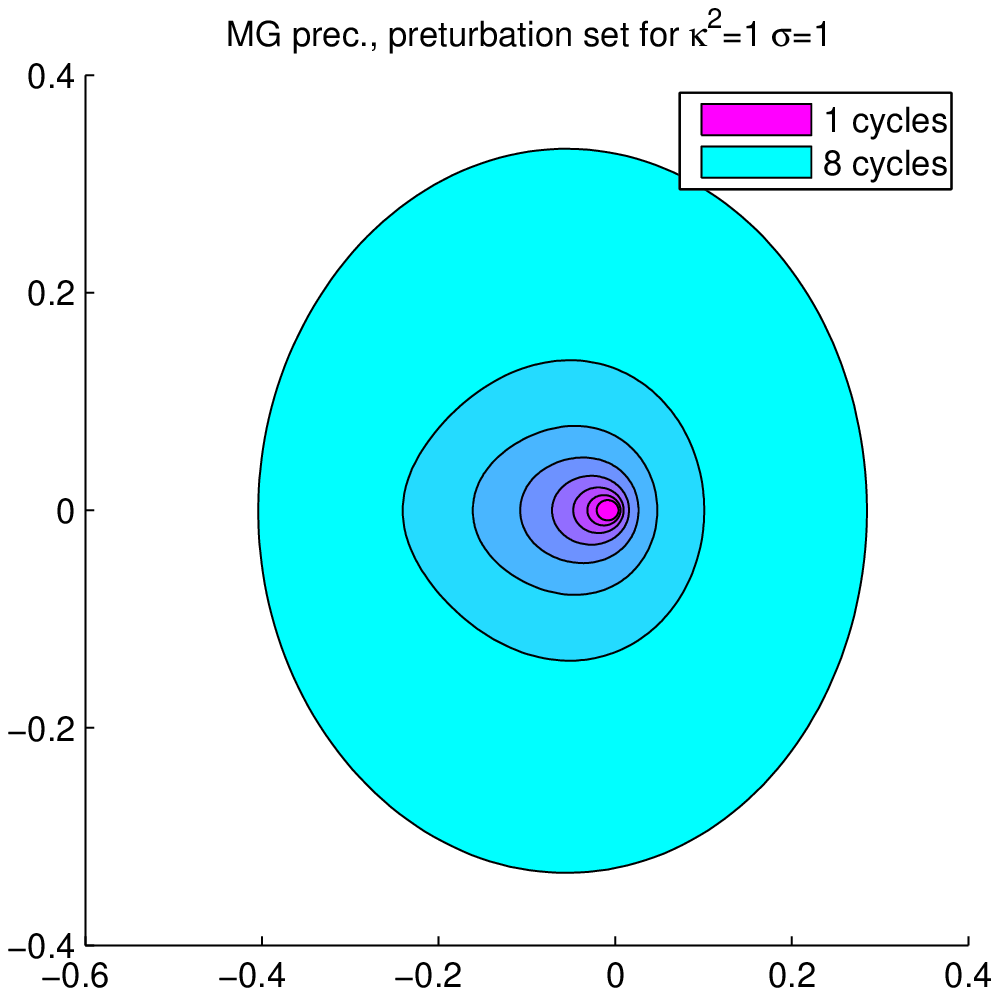} \hfill
\includegraphics[scale=0.5]{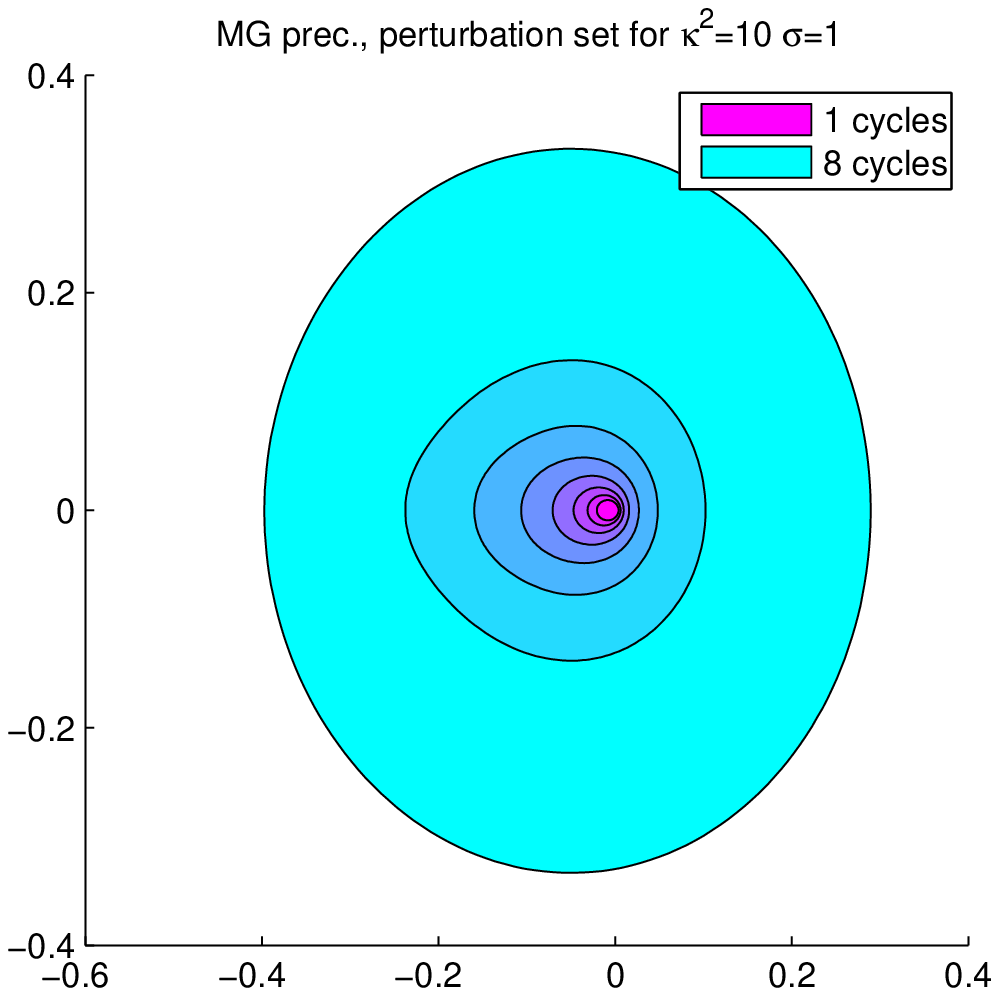} \\
\includegraphics[scale=0.5]{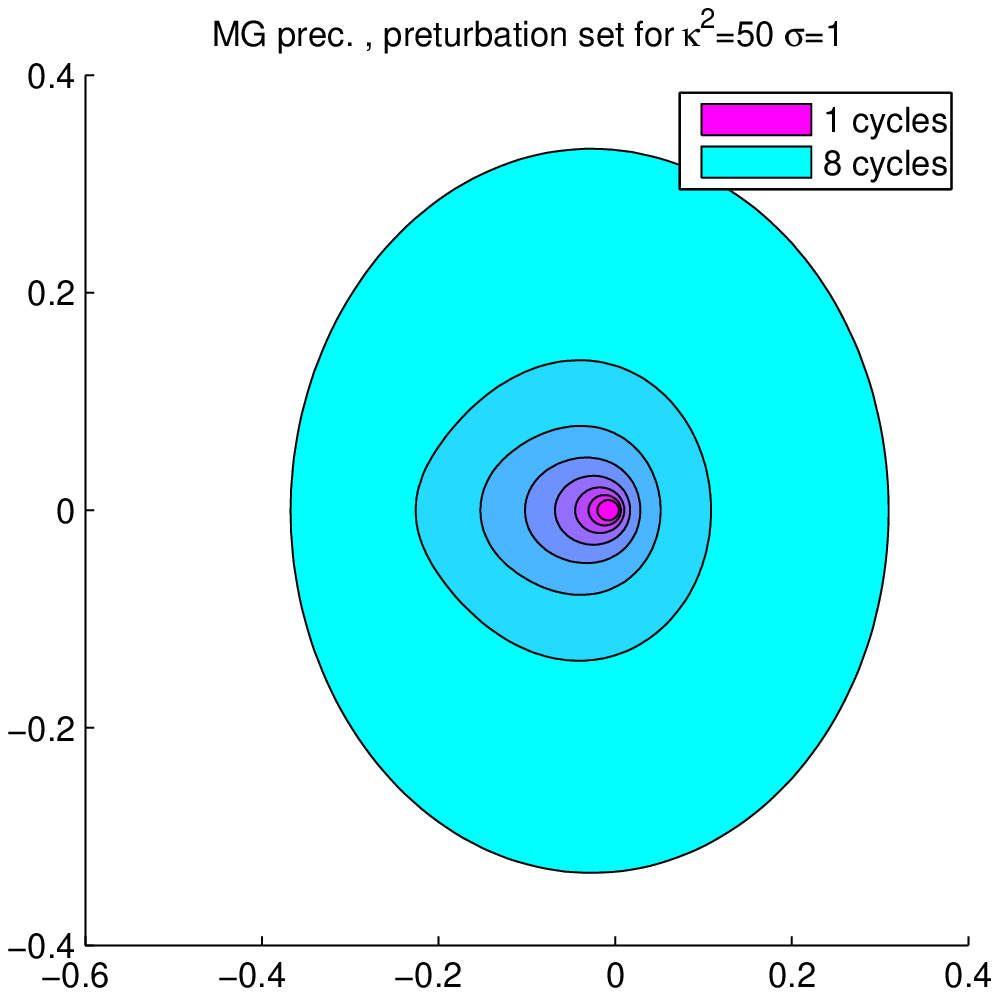}  
\end{center}
\caption{The $h^2$-scaled perturbation sets for the multigrid preconditioner. The parameter $\sigma = 1$ and $\kappa^2 = 1, 10, 50$. }
\label{fig:test1_mg_perturbationset}
\end{figure}

The FOV for the MG preconditioned system is presented for parameters $\kappa^2 = 1$, $10$, $50$, $\sigma = 1$, and a varying number of $V$-cycles in Fig. \ref{fig:test1_mg_fovset}. Based on this figure, one can verify that the FOV converges to a limit set, when the number of $V$-cycles in increased. This behavior is as predicted in Section 5. Examples of the corresponding perturbation sets are given in Figure \ref{fig:test1_mg_perturbationset}. The diameter of the perturbation sets clearly converges to zero when the number of $V$-cycles is increased. Due to the relatively small values of $\kappa$, all perturbation sets look rather similar.

\begin{figure}[ht]
\begin{center}
\includegraphics[scale=0.5]{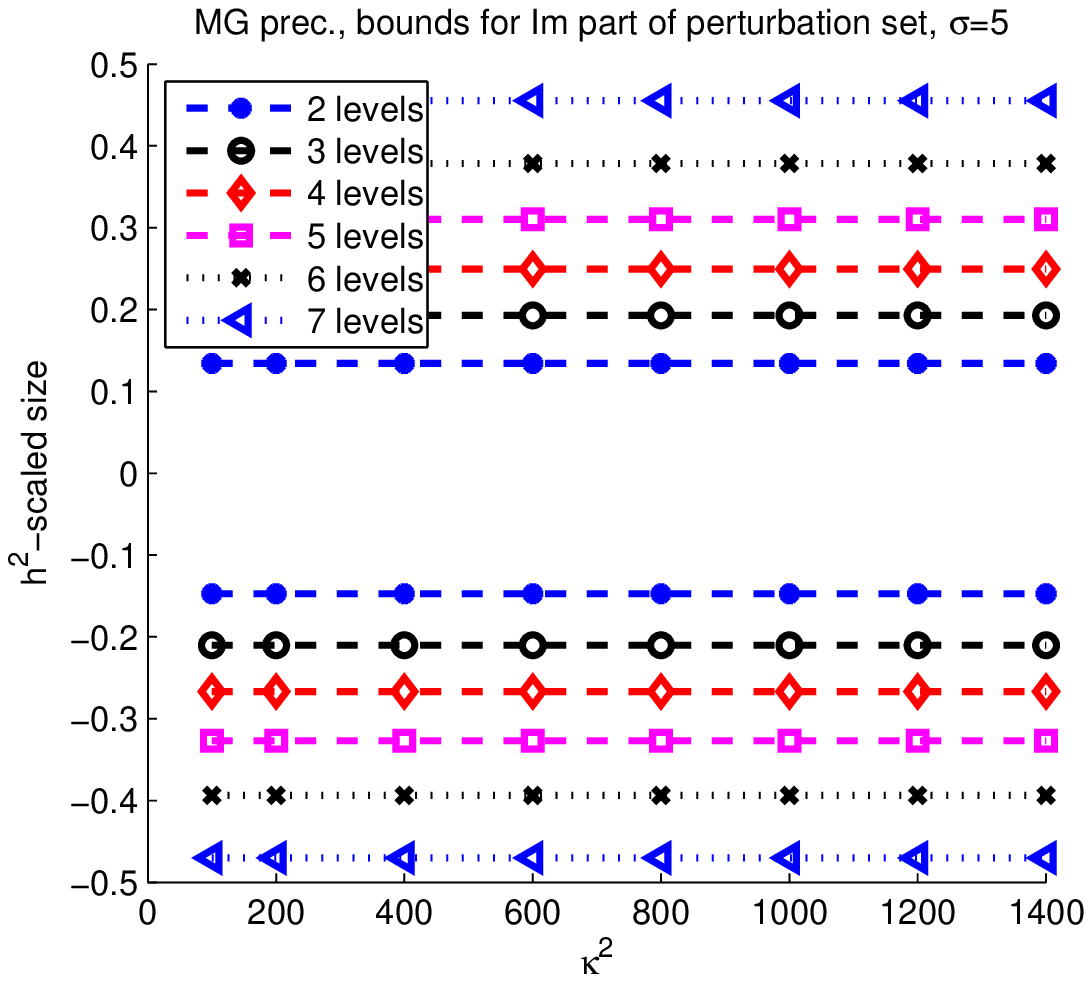} \hfill 
\includegraphics[scale=0.5]{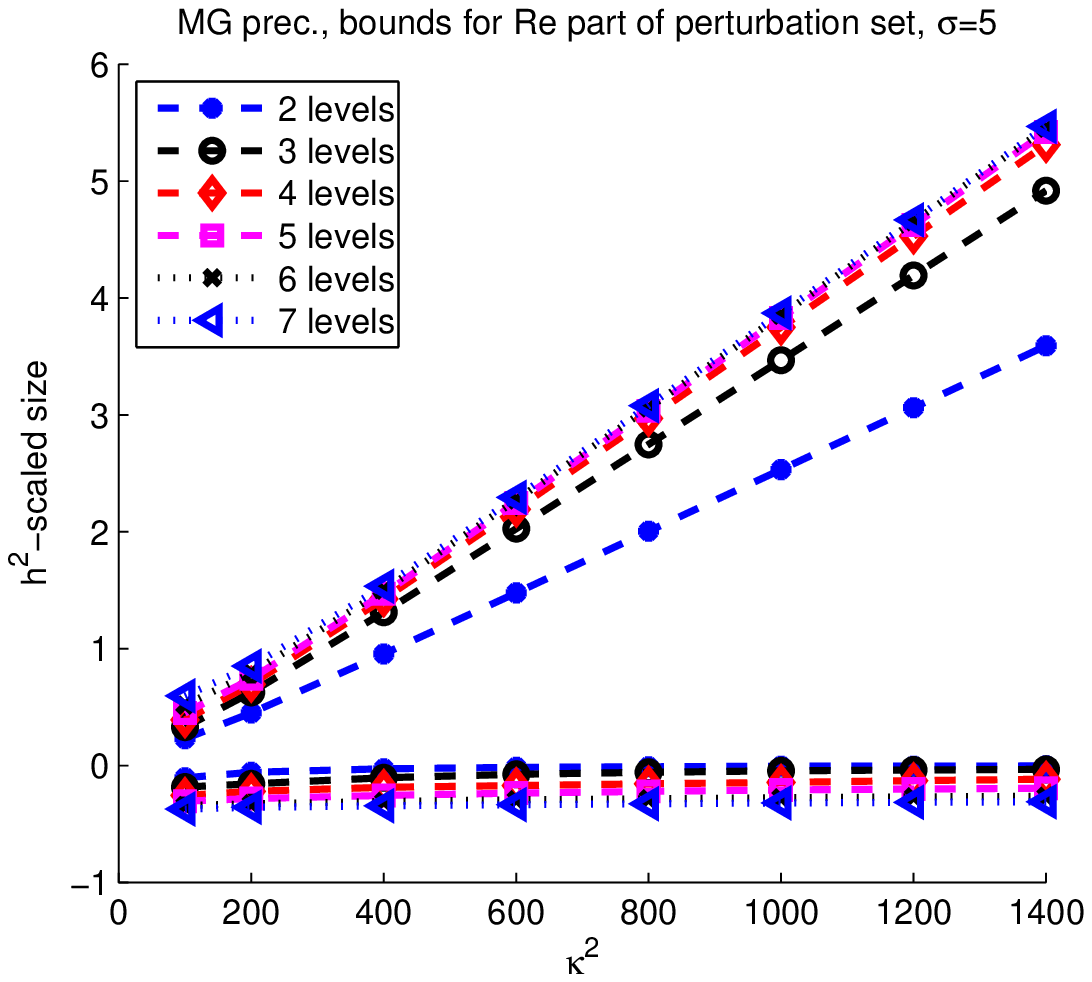}
\end{center}
\caption{The $h^2$-scaled bounds for the real and imaginary parts of the perturbation set for the MG preconditioner with a single $V$-cycle. A Level one mesh is used for the coarse grid and the number of levels in the multigrid hierarchy is varied from two to seven. The parameter is $\sigma = 5$.}
\label{fig:test1_mg_Vcycle_s5}
\end{figure}

Next, we will study the dependency of the dimensions of the perturbation set on the number of $V$-cycles and $h$. In this test, the coarse mesh is kept unchanged, but the number of levels in the multigrid hierarchy is varied from two to seven. The coarse grid for each test is the level two mesh. The results are presented in Figs. \ref{fig:test1_mg_Vcycle_s5} and \ref{fig:test1_mg_Vcycle_k1000}.

From Fig. \ref{fig:test1_mg_Vcycle_s5}, one can observe a linear dependency between $\kappa^2$ and the upper bound for the real part of the perturbation set. No $\kappa^2$-dependency is observed for the imaginary part of the set. The number of levels in the multigrid hierarchy affect both the slope and the $y$-intercept point of the computed lines. The bounds in Section 5 predict that the parameter $\gamma_0$ determines the $y$-intercept point and parameter $\gamma_1$ changes the slope of the lines. The error reduction factors $\gamma_0$ and $\gamma_1$ of the MG method depend on the number of levels, see e.g. \cite{Br:2007}, which explaines this phenomenon.

\begin{figure}[ht]

\begin{center}
\includegraphics[scale=0.5]{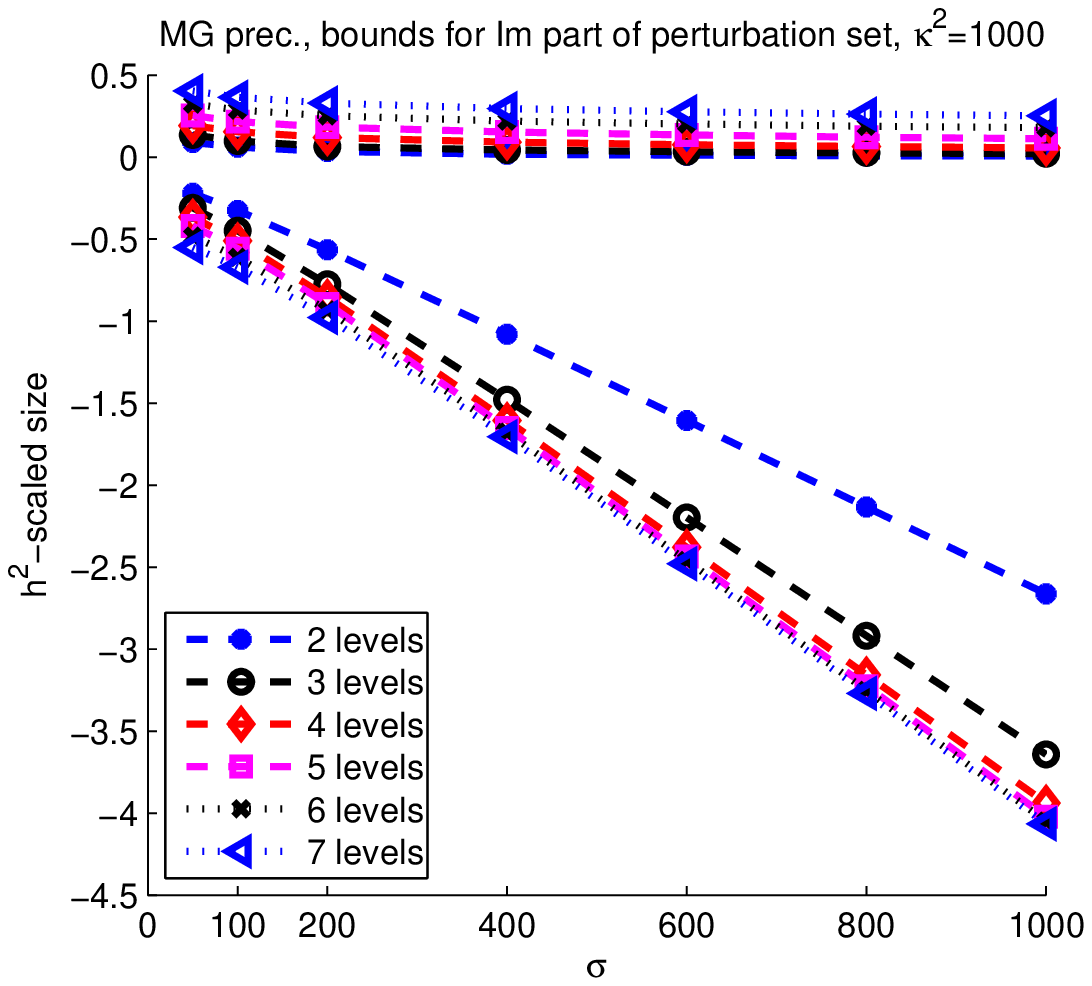} \hfill 
\includegraphics[scale=0.5]{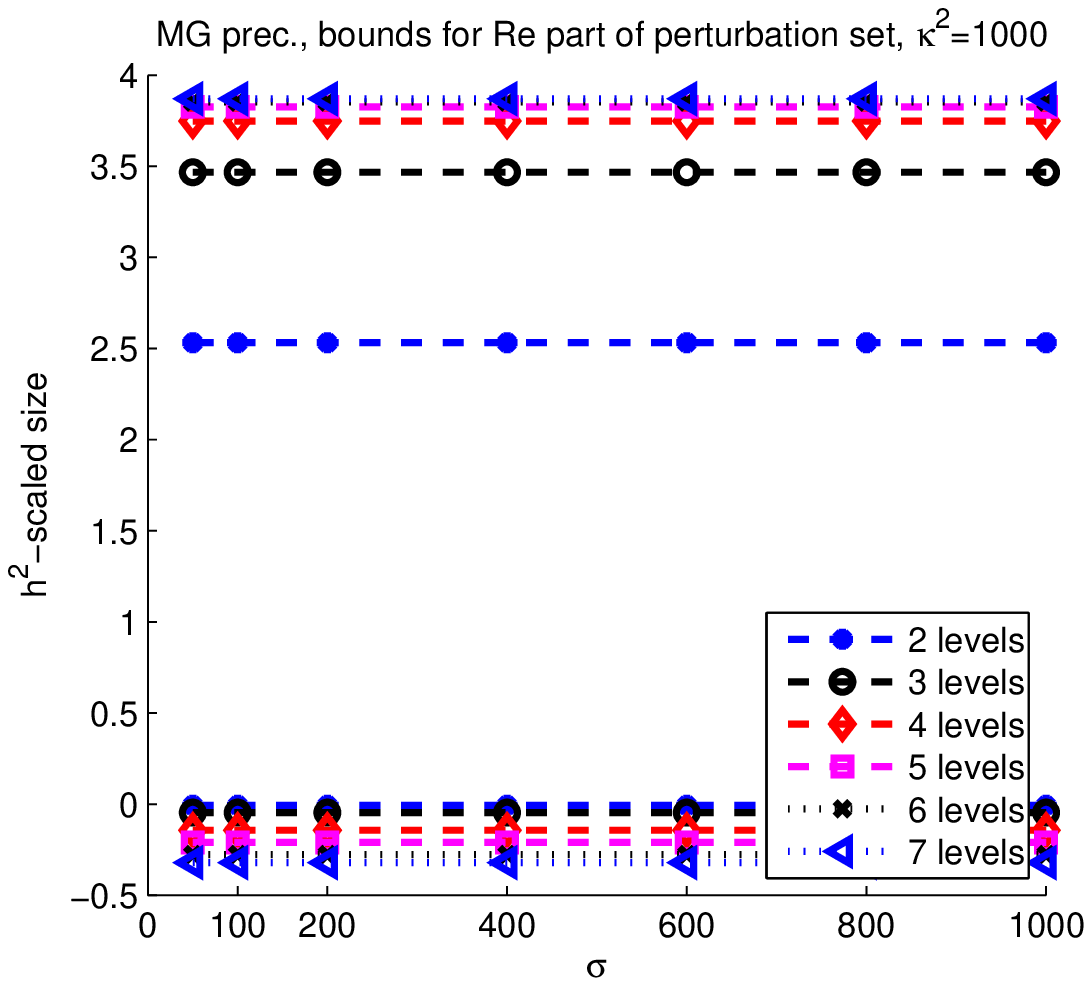}
\end{center}
\caption{The $h^2$-scaled bounds for the real and imaginary parts of the perturbation set for the MG preconditioner with a single $V$-cycle.  A level two mesh is used for the coarse grid and the number of levels in the multigrid hierarchy is varied from two to seven. The parameter is $\kappa^2 = 1000$.}
\label{fig:test1_mg_Vcycle_k1000} 
\end{figure}

The results for varying $\sigma$ in Fig. \ref{fig:test1_mg_Vcycle_k1000} are very similar. The lower bound for the imaginary part of the perturbation set depends linearly on $\sigma$, whereas the real part is $\sigma$-independent. Again, the intercept points and slopes of the lines change due to the changing error reduction factors. 

The results on the size of the perturbation set are in accordance with the derived bounds. However, the estimate for the upper bound of the real part and lower bound for the imaginary part seem to stay constant for varying $\kappa$ and $\sigma$. The bounds given in Section 5 overestimate the size of the set by predicting $\kappa^2$- and $\sigma$-dependency in these cases. As the FOV is a direct sum of the perturbation set and the FOV set for the Laplace preconditioner, the derived bounds manage capture the behavior of the FOV regardless of this overestimation.

Finally, we study how the dimensions of the perturbation set behave when the number of $V$-cycles grows. We have used a hierarchy with four levels and varied the number of $V$-cycles from one to four. The dependency of the bounds for the real part on $\kappa$ and the number of $V$-cycles
is visualized in Fig. \ref{fig:test1_mg_k_Vcycle2} and the dependency of the imaginary part on $\sigma$ and the number of $V$-cycles in Fig. \ref{fig:test1_mg_s_Vcycle2}. In addition, we have visualized the development of the bounds for parameters $\kappa^2 =1000$ and $\sigma=5$ for larger number of $V$-cycles in Fig. \ref{fig:test1_mg_perturbation_loglog}.

\begin{figure}
\begin{center}\hfill
\includegraphics[scale=0.5]{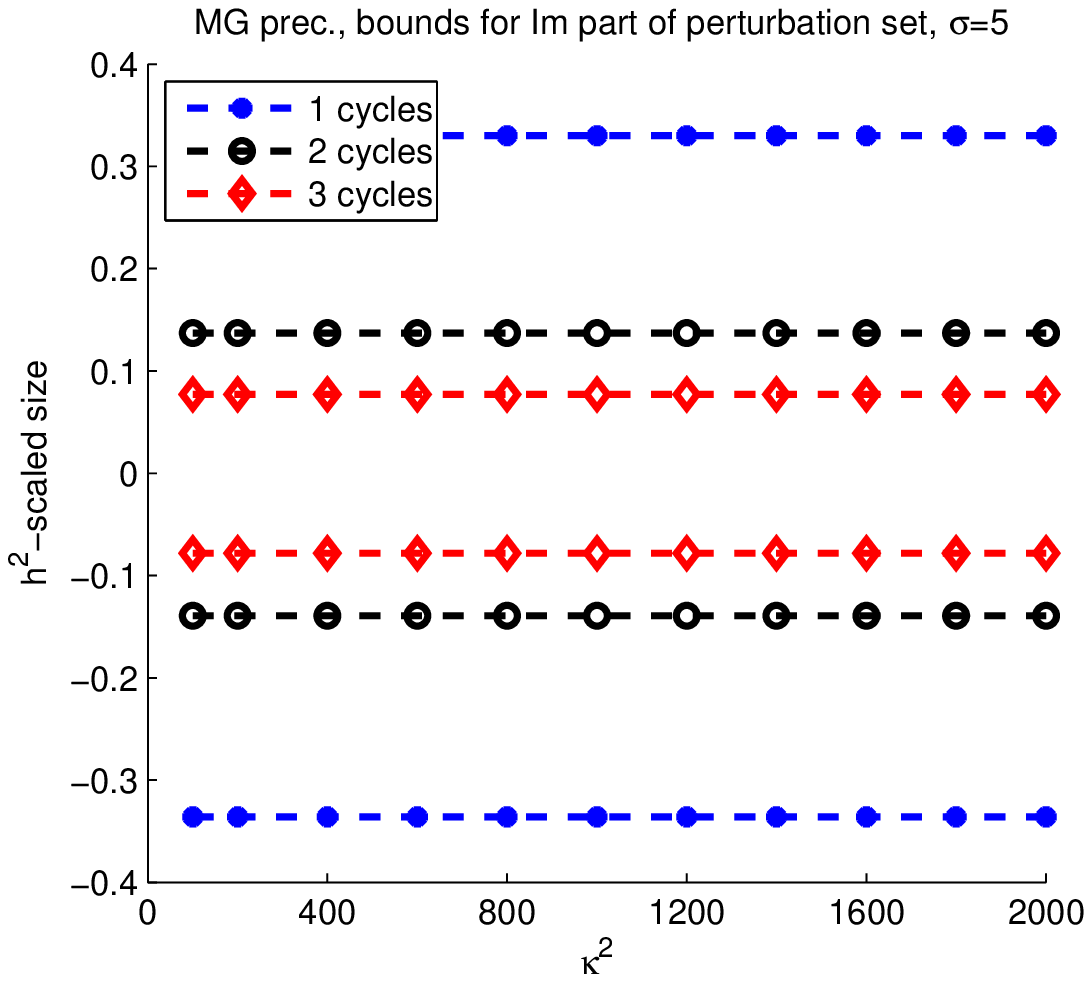} \hfill 
\includegraphics[scale=0.5]{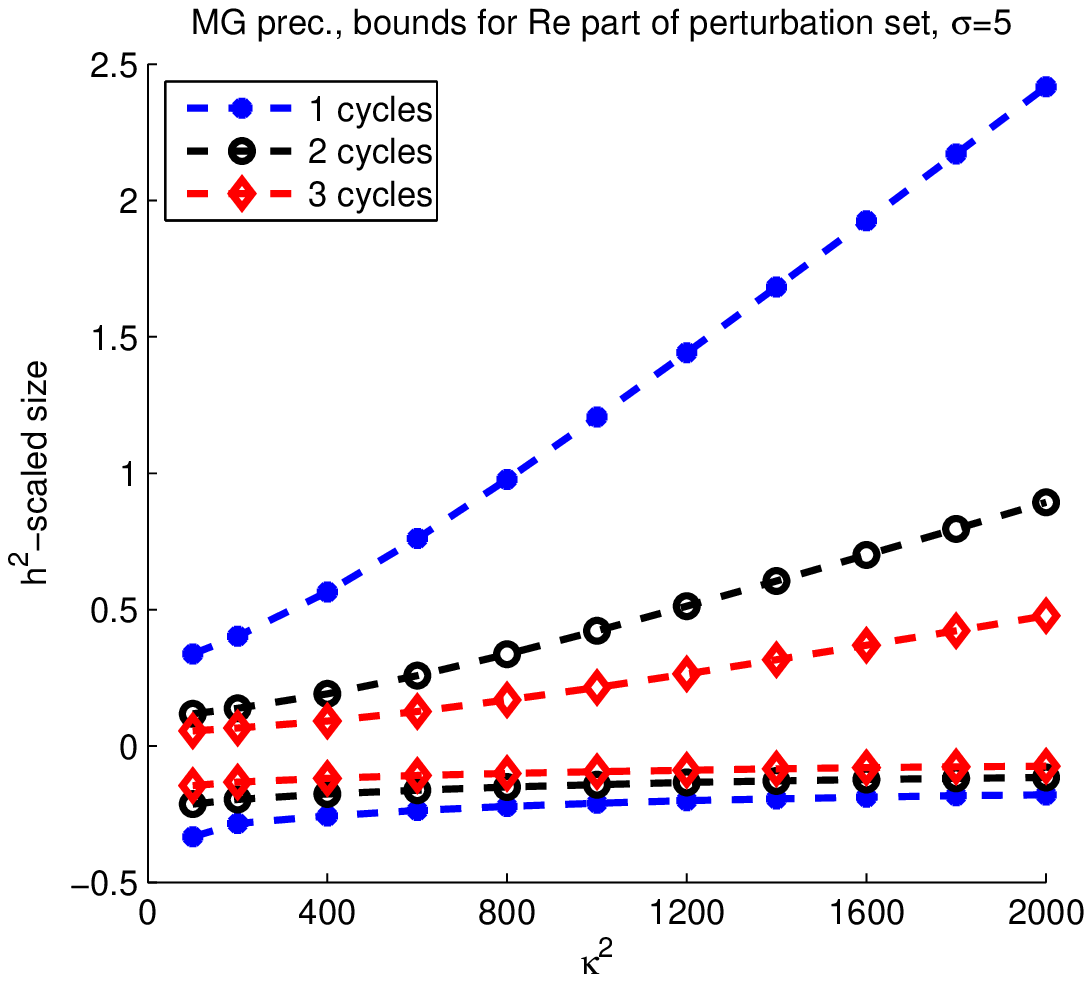}\hfill
\end{center}
\caption{The $h^2$-scaled bounds for the real and imaginary parts of the perturbation set for the MG preconditioner. The coarse grid and the number levels in the multigrid hierarchy are fixed and the number of $V$-cycles is varied. The parameter is $\sigma = 5$.}
\label{fig:test1_mg_k_Vcycle2}
\end{figure}

\begin{figure}
\begin{center}\hfill
\includegraphics[scale=0.5]{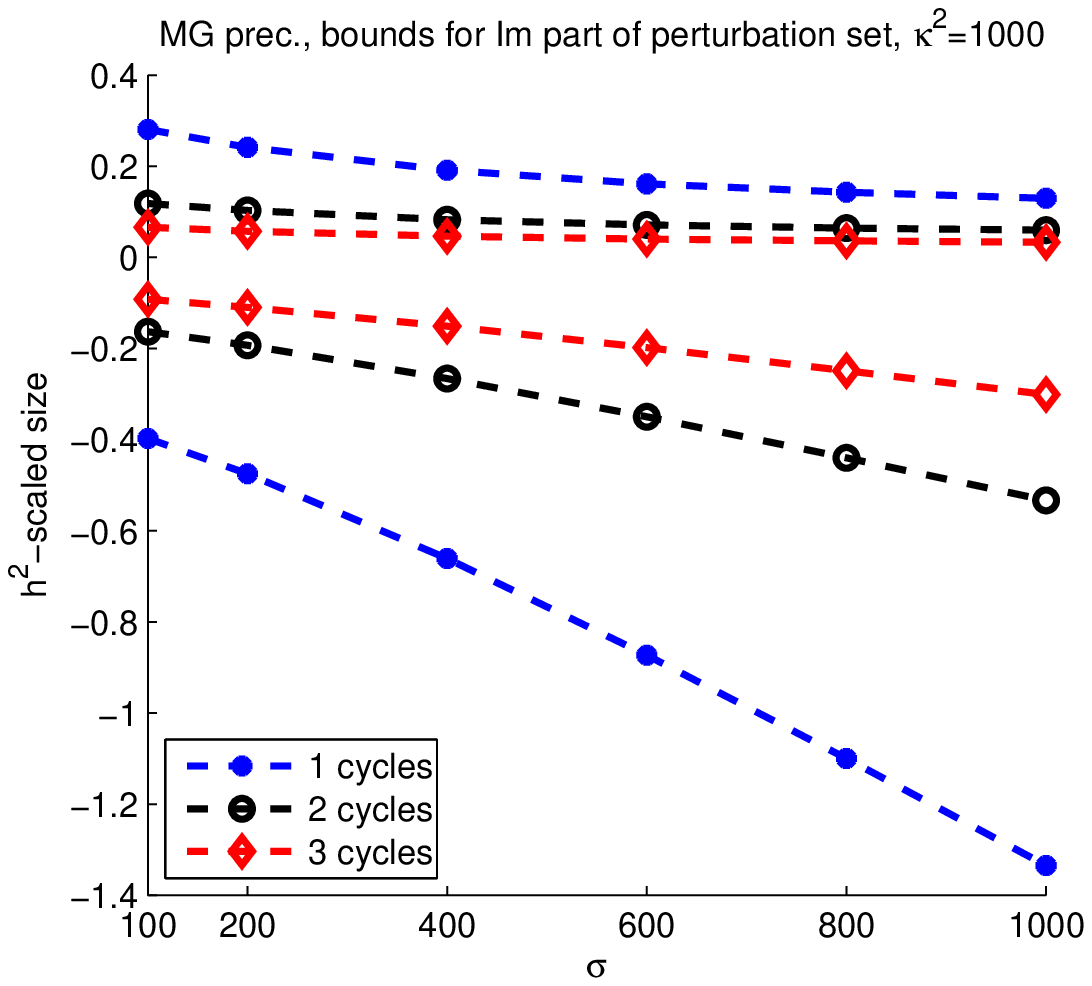} \hfill 
\includegraphics[scale=0.5]{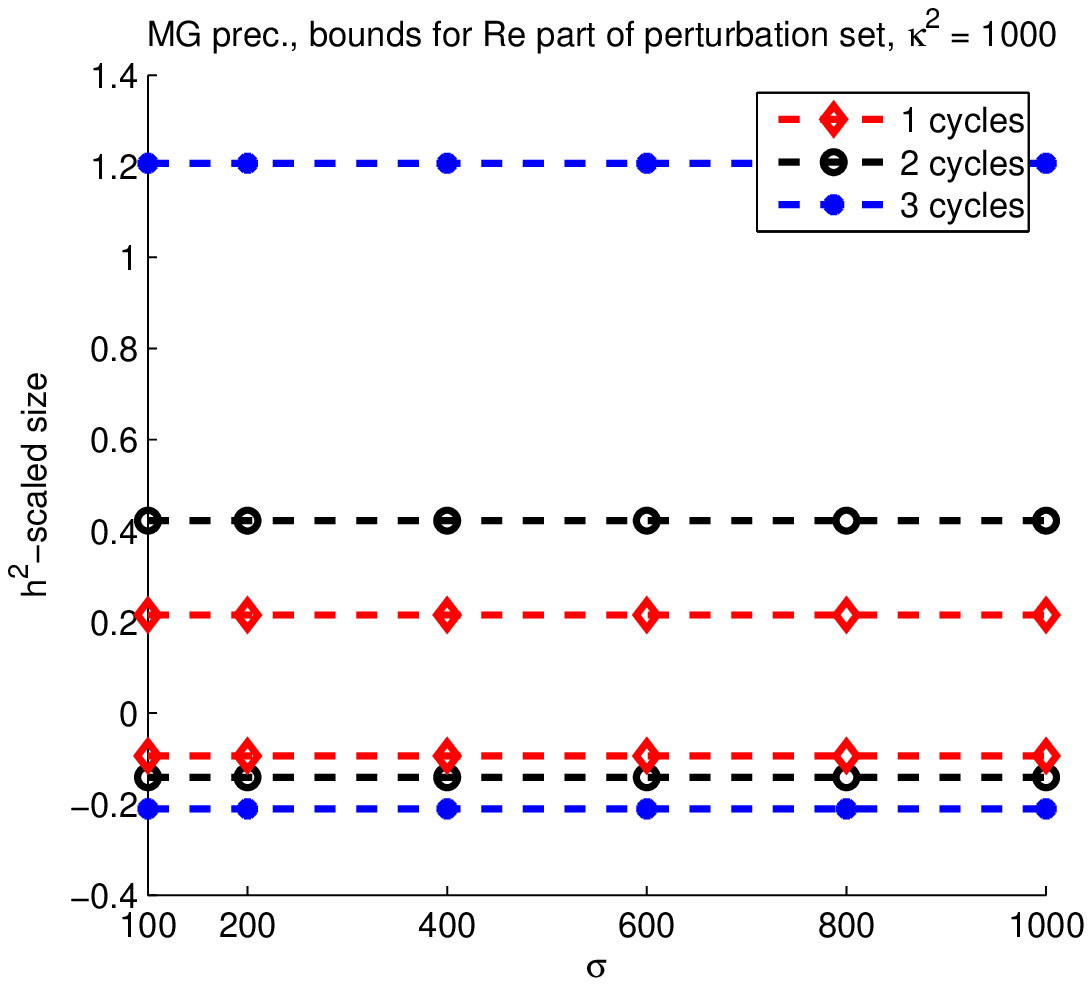}\hfill
\end{center}
\caption{The $h^2$-scaled bounds for the real and imaginary parts of the perturbation set for the MG preconditioner. The coarse grid and the number levels in the multigrid hierarchy are fixed and the number of $V$-cycles is varied. The parameter is $\kappa^2 = 1000$.}
\label{fig:test1_mg_s_Vcycle2}
\end{figure}

\begin{figure}
\begin{center}
\includegraphics[scale=0.5]{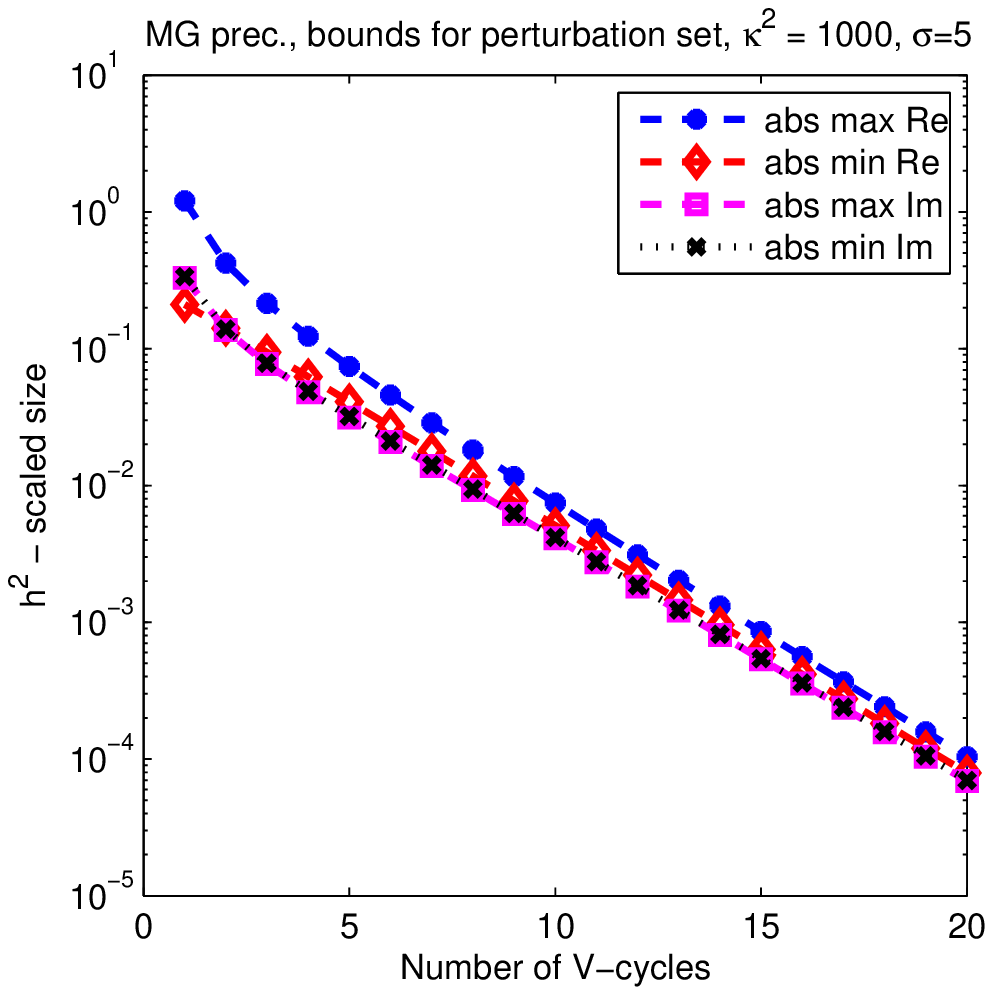}
\end{center}
\caption{Convergence of the $h^2$-scaled bounds for the real and imaginary parts of the perturbation set for the MG preconditioner as a function of the number of $V$-cycles. The parameters are $\sigma = 5$ and $\kappa^2=1000$}
\label{fig:test1_mg_perturbation_loglog}
\end{figure}

Based on these results, the size of the perturbation set decreases as predicted when the number of $V$-cycles is increased. The slope and the intercept points of the lines tend to zero. Based on Fig. \ref{fig:test1_mg_perturbation_loglog}, the convergence speed is same for all dimensions of the set, as predicted by in Section 5.

To study the dependency of the required number of MG preconditioned GMRES iterations on the parameters and the number of $V$-cycles, we consider the problem with $f=1$. The MG method uses a mesh hierarchy with six levels, with the level two mesh acting as the coarse grid. The stopping criterion for the GMRES iteration was set to $10^{-6}$. The required number of GMRES iterations for different parameter values and different number of $V$-cycles is visualized in Fig. \ref{fig:test1_GMRES_iterations_MG}. 

The interesting factor for the multigrid preconditioner is the dependency of the required number of GMRES iterations on the number of $V$-cycles. This is demonstrated in Fig. \ref{fig:test1_GMRES_iterations_MG} by comparing the number of iterations for varying number of multigrid $V$-cycles against one using ten $V$-cycles. Based on these results, one can observe a clear dependency between the number $V$-cycles and the number of required iterations. For large values of $\kappa$, the number of $V$-cycles has a quite big effect to number of required number ofG MRES iterations. This is due to the $\kappa^2$-dependency of the size of the perturbation set. The inclusion of the origin to the FOV for the MG preconditioned system seems to be quite irrelevant for the convergence.

\begin{figure}
\begin{center}\hfill
\includegraphics[scale=0.5]{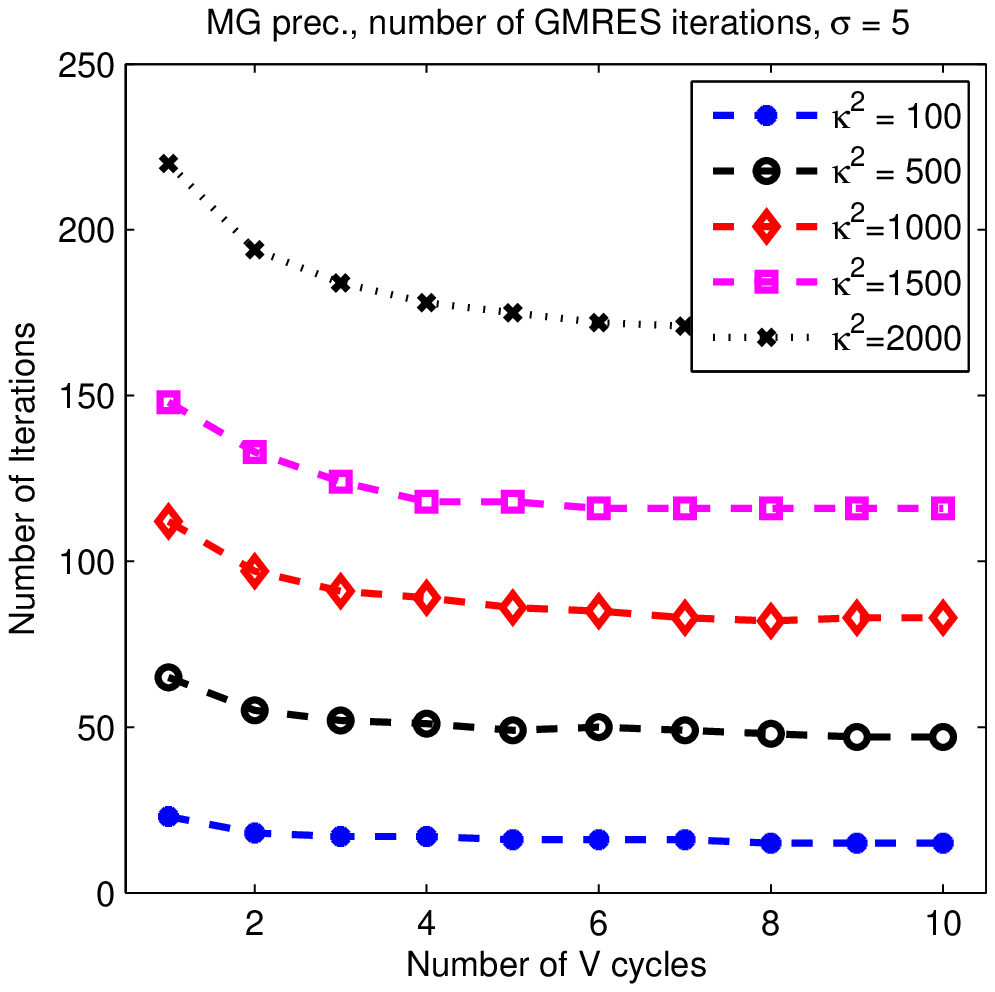} \hfill \includegraphics[scale=0.5]{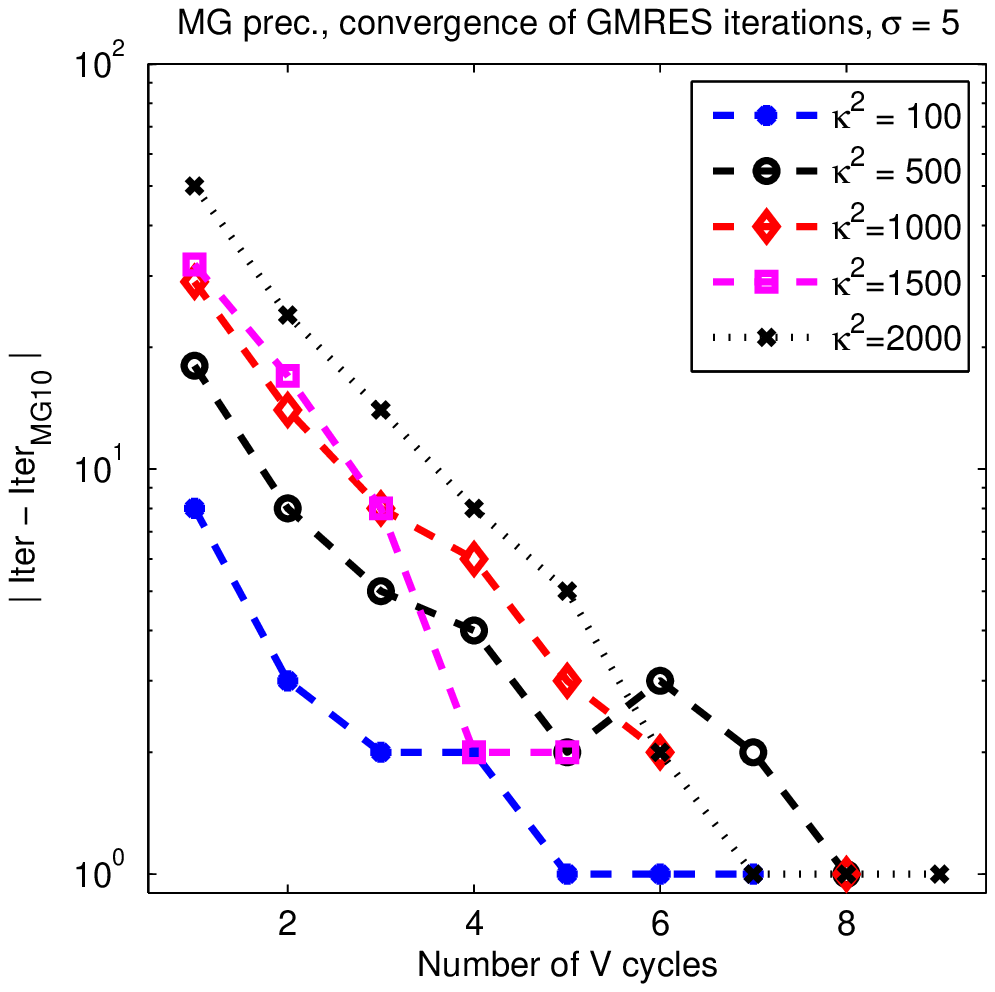}\hfill
\end{center}
\caption{The left figure shows the number of GMRES iterations required to solve the MG preconditioned system for $f=1$. The right figure shows the convergence of the number of GMRES iterations by comparing number of iterations for varying number of $V$-cycles to MG preconditioner using ten $V$-cycles.}
\label{fig:test1_GMRES_iterations_MG} 
\end{figure}

\clearpage
\subsection{Two-level preconditioner}

Finally, we consider the two-level preconditioner presented in Section 6. Our aim is to verify the bounds given in Theorem \ref{th:2L_bounds}. The most interesting parameter for the two-level preconditioner is the mesh size of the coarse grid. This parameter effectively determines the amount of computational work required to solve the linear system. 

In Fig. \ref{fig:test1_2L_FOV}, the FOV is computed for parameters $\kappa = 4\pi$ and $\sigma = 7$ by keeping the fine grid level fixed to seven and varying the coarse grid level. Based on these results, if a sufficiently small coarse grid mesh size is used the FOV is located at the right half-plane and does not contain the origin. In addition, the imaginary part of the FOV converges to zero. This behavior is as predicted in Section 6.

\begin{figure}
\begin{center}\hfill
\includegraphics[scale=0.5]{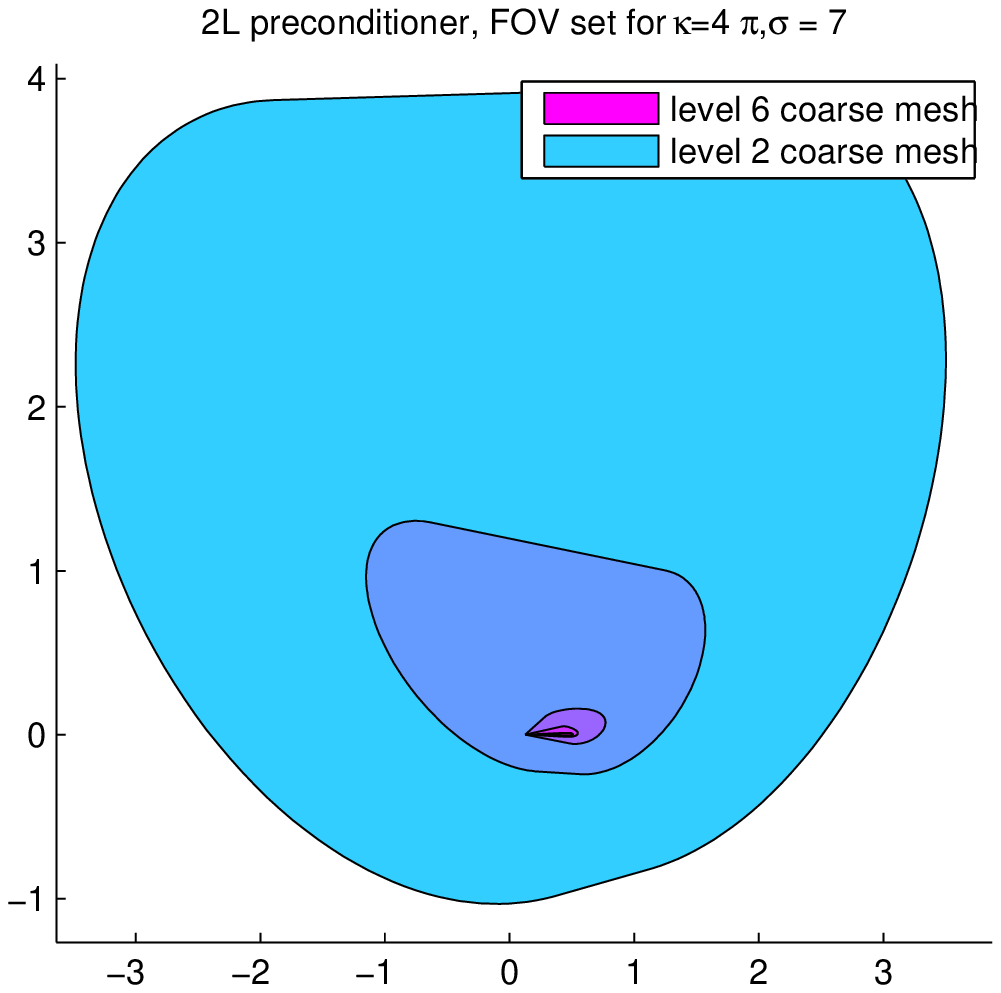} \hfill
\includegraphics[scale=0.5]{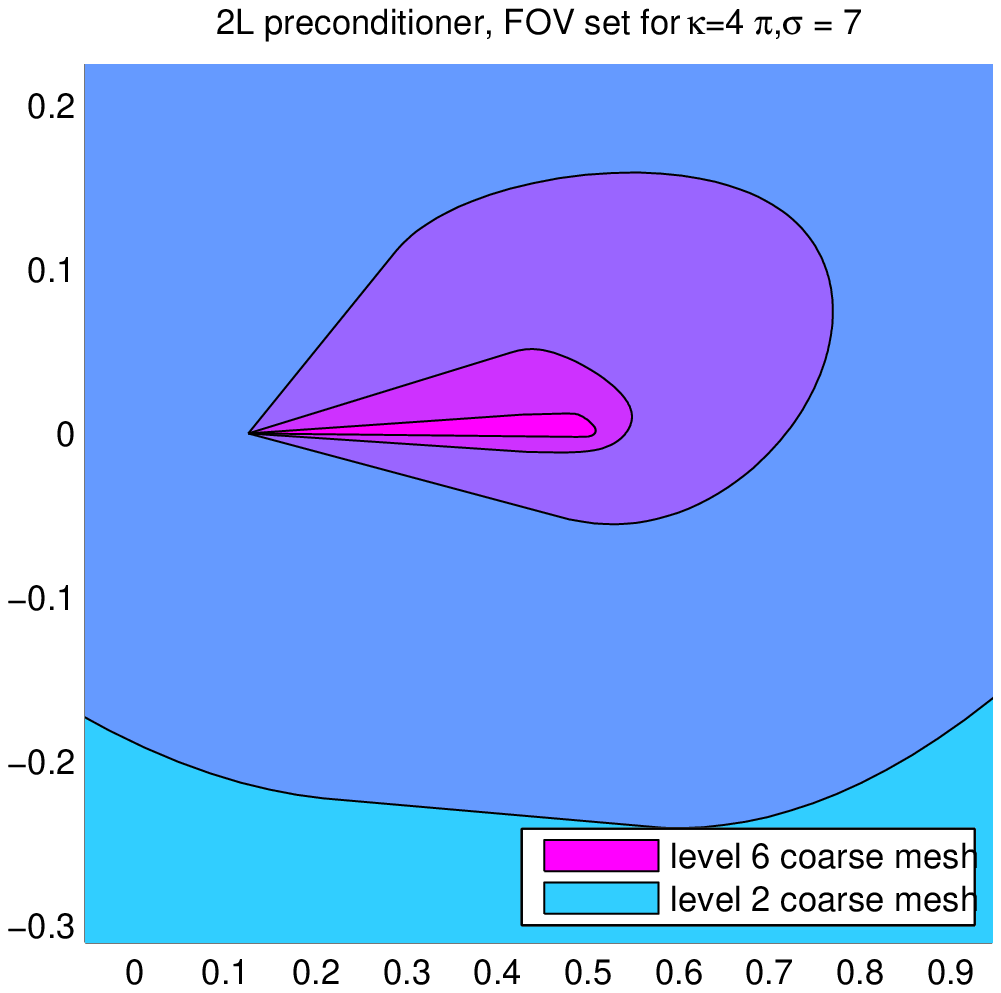}  \hfill
\end{center}
\caption{The $h^2$-scaled FOV sets for the two level preconditioned system. A seventh level fine mesh was used.}
\label{fig:test1_2L_FOV}
\end{figure}

The interesting result of Section 6 is the implication that the coarse grid mesh size in a convex domain should be such that the term $\kappa^3 H$ is small. We have computed the smallest real part of the FOV set for several different values of $\kappa$ using a level nine computational grid and different coarse grid levels. The coarse grid level required before FOV is located at the right half-plane is presented in the Fig.  \ref{fig:test1_2L_Nref}. As only few datapoints were computed it is difficult to determine if the requiremenet for the coarse grid mesh size is necessary or not. Unfortunatelly the FOV sets are computationally expensive to find, so we are not able to give a conclusive example.

\begin{figure}
\begin{center}
\includegraphics[scale=0.5]{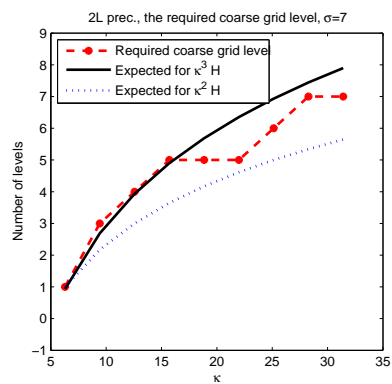}
\end{center}
\caption{The coarse mesh level required for the real part of the FOV set to be located in the right half plane. A level nine fine grid was used.}
\label{fig:test1_2L_Nref}
\end{figure}

In Fig. \ref{fig:test1_2L_conv_bounds} the behavior of the rectangle bounding the FOV of the two-level preconditioned system for parameters $\sigma=7$ and $\kappa=4\pi$, $6\pi$, $10\pi$ is visualized  as a function of the coarse grid mesh size $H$. Based on these results, after a sufficiently dense coarse mesh is reached the bounds for the imaginary part converge to zero. The bounds for the real part on the other hand converge to limit values. These results are in accordance with the behavior predicted for FOV in Section 6.  

\begin{figure}
\begin{center}
\includegraphics[scale=0.5]{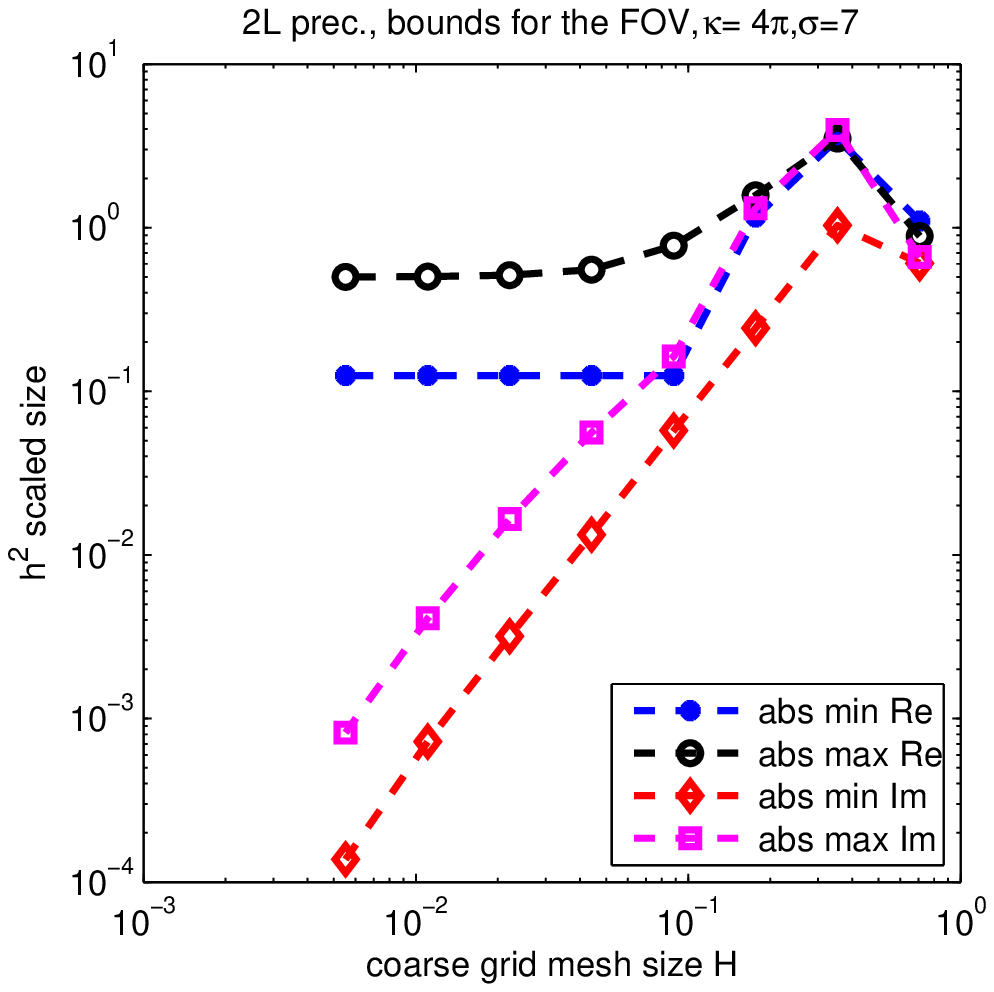} 
\includegraphics[scale=0.5]{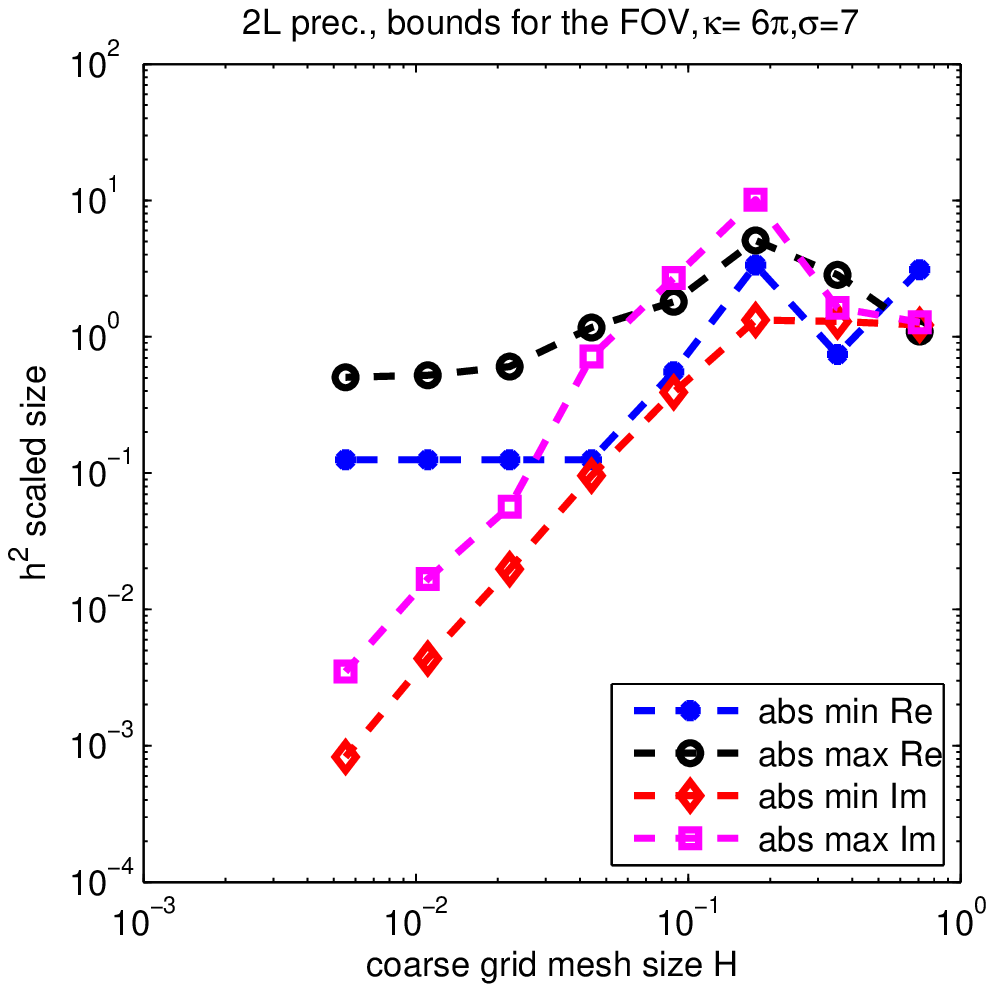} \\ 
\includegraphics[scale=0.5]{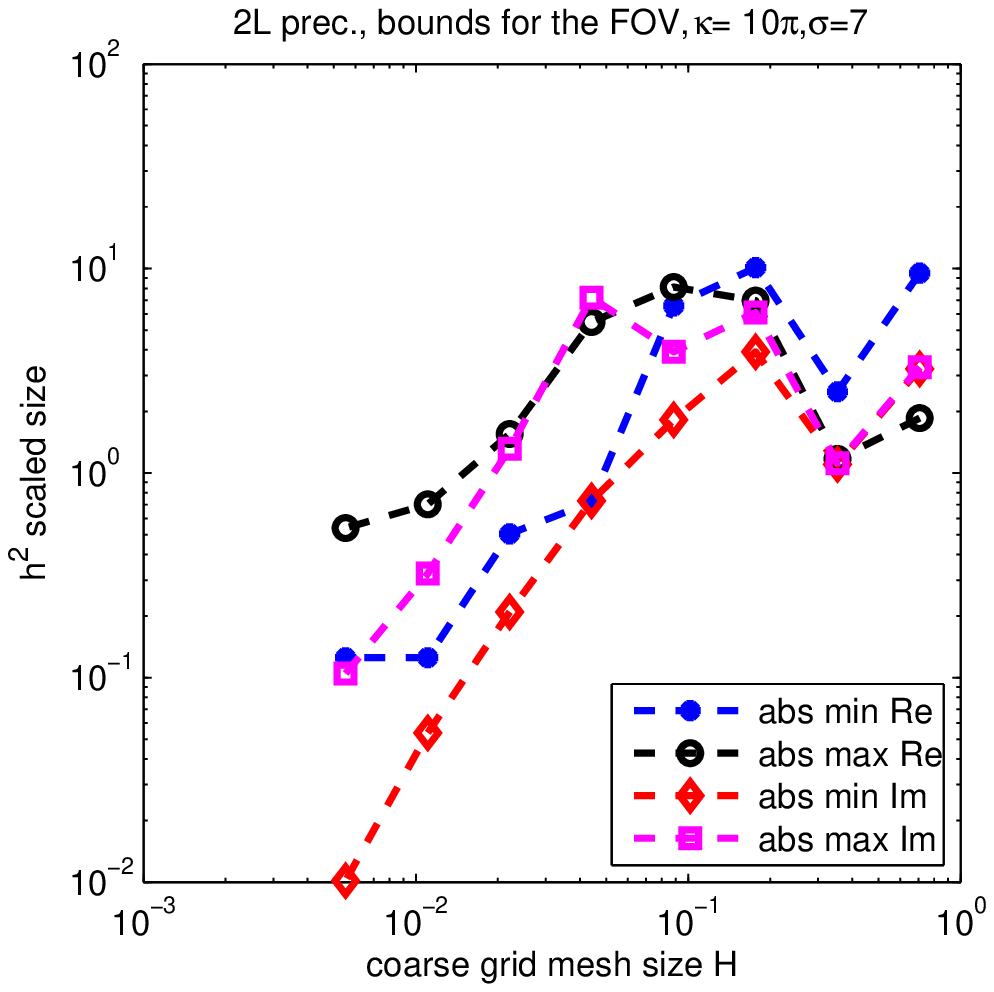} 
\end{center}
\caption{The $h^2$-scaled FOV sets for the two level preconditioner. The parameter $\sigma = 7$ and $\kappa = 4\pi, 6\pi , 10\pi$. }
\label{fig:test1_2L_conv_bounds}
\end{figure}

To study the two-level preconditioned GMRES method, we solve the linear system using a level nine fine grid and varying coarse grid level. The load is $f=1$ and the stopping criterion is set to $10^{-6}$. The results are presented in Fig. \ref{fig:test1_2L_GMRES}. These results are as predicted in Section 6. After a sufficiently large coarse grid level, the number of iterations stagnates to a limit value. The stagnation point depends on the parameter $\kappa$. An interesting observation is that the number of iterations converges regardless of the inclusion of origin in the FOV. 

\begin{figure}
\begin{center}
\includegraphics[scale=0.5]{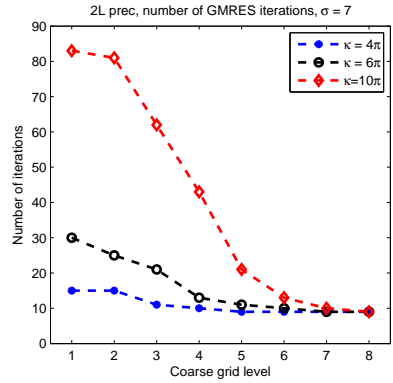}
\end{center}
\caption{The number of GMRES iterations required to solve the two-level preconditioned system. A level nine fine grid was used and the coarse grid level was varied.}
\label{fig:test1_2L_GMRES}
\end{figure}

\subsection{A three dimensional example}

We conclude the numerical examples by considering a three dimensional cube, $\Omega = (0,1)^3$. A hierarchy of uniformly refined tetrahedral meshes with five levels has been used in all of the tests. The finest level mesh had approximately $750 \cdot 10^3$ degrees of freedom and $4.5 \cdot 10^6$ tetrahedral elements. As we have demonstrated the behavior of the FOV in detail in the two dimensional test case, our interest will be solely on the number of iterations required to solve the preconditioned systems using GMRES. The load for all three dimensional test cases is chosen as $f=1$ and the stopping criterion for GMRES is chosen as $10^{-6}$.

We begin our experiment with the exact Laplace preconditioner. To solve the Laplace problem, a preconditioned conjugate gradient (PCG) method with a single multigrid $V$-cycle as a preconditioner was used. The stopping criterion for the PCG method is set to $10^{-12}$. The parameters $\sigma$ and $\kappa$ are both varied. 

\begin{figure}
\begin{center}
\includegraphics[scale=0.5]{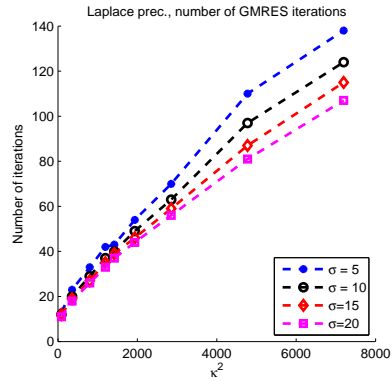}
\end{center}
\caption{The number of GMRES iterations required to solve the Laplace preconditioned system in the three dimensional test case.}
\label{fig:test2_laplace_GMRES}
\end{figure}

The required number of GMRES iterations for different parameter values is presented in Fig. \ref{fig:test2_laplace_GMRES}. Based on these results, the number of iterations behaves qualitatively as predicted by the theory. The number of iterations is dependent both on $\kappa^2$ and $\sigma$. When $\sigma$ grows, the presented theory predicts that also the distance from the origin to FOV increases. This can be seen as the changing slope of the $\kappa^2$ to number of iterations lines in the Fig. \ref{fig:test2_laplace_GMRES}. 

Next, we replace the exact Laplace preconditioner with a multigrid based preconditioner. The interesting question here is the dependency of the number of GMRES iterations on the number of $V$-cycles. The required number of GMRES iterations for different parameter values is presented in Fig. \ref{fig:test2_mg_GMRES}. Based on these results one can clearly observe that the required number of iterations is strongly dependent on the number of multigrid $V$-cycles. Again, the inclusion of the origin into the FOV is not observed in the required number of GMRES iterations.

\begin{figure}
\begin{center}
\hfill
\includegraphics[scale=0.5]{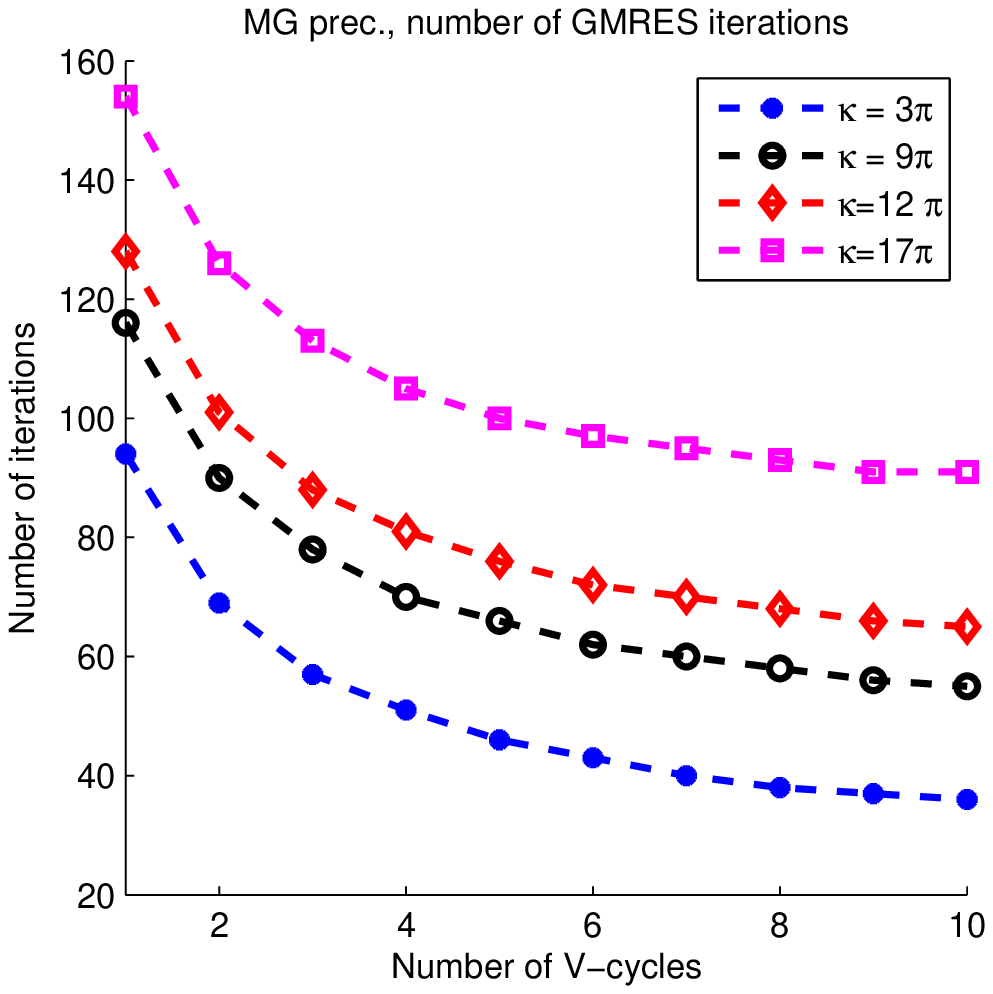} \hfill 
\includegraphics[scale=0.5]{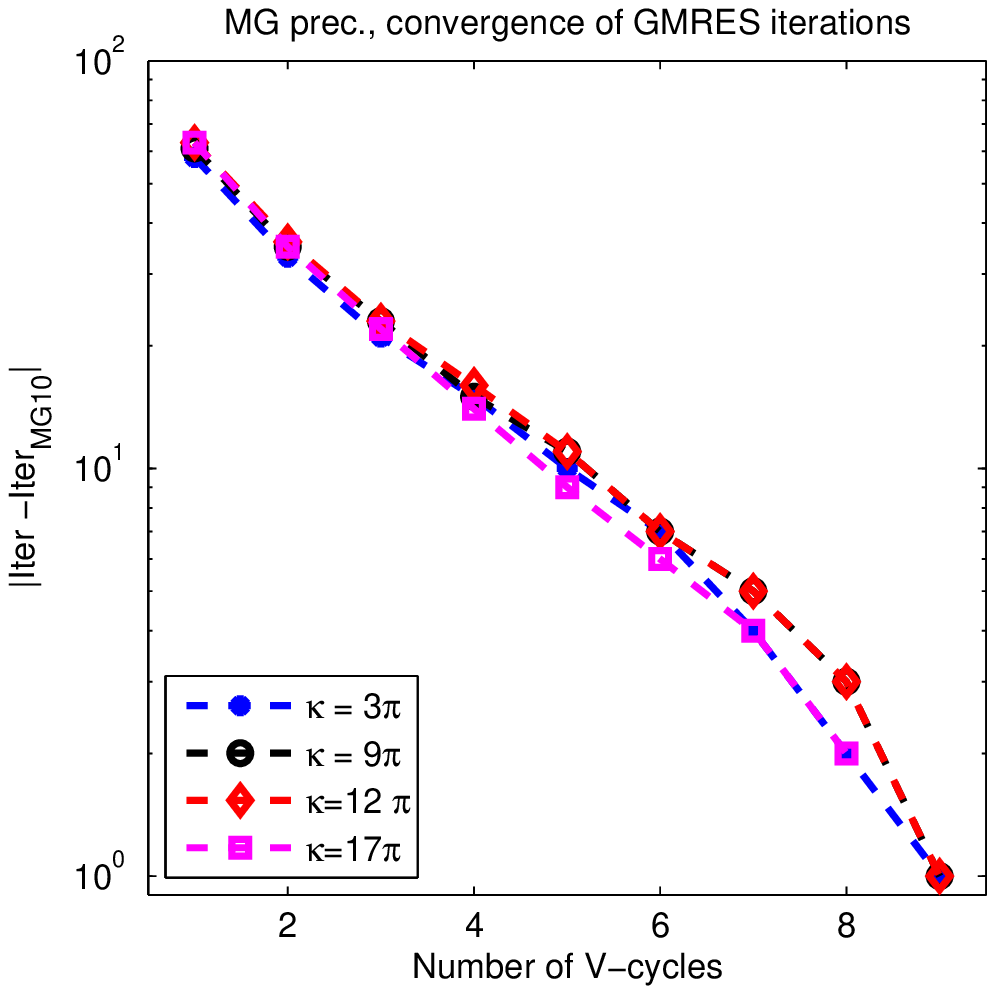}
\hfill
\end{center}
\caption{ The left figure shows the number of GMRES iterations required to solve the three dimensional test problem. The right figure shows the convergence of the number of iterations by comparing the number of iterations for varying number of $V$-cycles to the MG preconditioner using ten $V$-cycles.}
\label{fig:test2_mg_GMRES}
\end{figure}

Finally, we consider the two-level preconditioner. The mesh levels from one to four are used as the coarse grid. The same procedure as for the exact Laplace preconditioner is used to solve the Poisson problem on the fine grid. The number of GMRES iterations for different values of $\kappa$ and different coarse grid levels are presented in Table \ref{table:test2_2L_GMRES}. From the number of iterations one can qualitatively verify the result given in Section 6. Namely, large values of $\kappa$ require more dense coarse meshes before the number of iterations converges to a limit value. In the present case, this happened only for the parameter value $\kappa=4 \pi$. Probably, even a more dense computational mesh would be required to resolve the solution for $\kappa=10\pi$.

\begin{table}
\begin{center}
\begin{tabular}{|l|l|l|l|l|}
 	 & \multicolumn{4}{|c|}{Coarse grid levels}\\
\hline
$\kappa$ & 1 & 2 & 3 & 4 
\\\hline	
$4 \pi$  & $52$   & $37$   & $28$   & $25$ \\
$6 \pi$  & $197$  & $131$  & $73$   & $37$ \\
$10\pi$  & n.c.   & n.c    & n.c.   & $130$ \\
\end{tabular}
\caption{ Number of GMRES iterations required to solve the problem using the two-level preconditioner. The computational mesh has approximately $750 \cdot 10^3$ degrees of freedom and $4.5 \cdot 10^6$ tetrahedral elements. The iteration is deemed not to converge, if more than two hundred iterations are required.}
\label{table:test2_2L_GMRES}
\end{center}
\end{table}

\bibliographystyle{spmpsci}      


\bibliography{Sifted_PREC}

\end{document}